\documentclass[11pt]{article}

\usepackage[a4paper,margin=1in]{geometry}
\usepackage{tikz}
\usepackage{float}
\usepackage[T1]{fontenc}
\usepackage[utf8]{inputenc}
\usepackage{booktabs}
\usepackage{lmodern}
\usepackage{microtype}
\usepackage{amsmath,amssymb,amsthm,mathtools}
\usepackage{enumitem}
\usepackage[colorlinks=true,linkcolor=blue,citecolor=blue,urlcolor=blue]{hyperref}

\title{Cycle lengths and chords under chromatic and degree constraints}
\author{Xiaozheng Chen\footnote{School of Mathematics and Statistics, Zhengzhou University, Zhengzhou 450000, P.R. China. Email: cxz@zzu.edu.cn. Supported by National Nature Science Foundation of China (No. 12301457).}~~~ and Bo Ning\footnote{College of Computer Science, Nankai University, Tianjin 300350, P.R. China.
E-mail: \texttt{bo.ning@nankai.edu.cn} (B. Ning). Partially supported by the National Nature Science
Foundation of China (No. 12371350).}}
\date{}

\theoremstyle{plain}
\newtheorem{theorem}{Theorem}[section]
\newtheorem{lemma}[theorem]{Lemma}
\newtheorem{proposition}[theorem]{Proposition}
\newtheorem{corollary}[theorem]{Corollary}
\newtheorem{fact}[theorem]{Fact}

\theoremstyle{definition}
\newtheorem{cons}[theorem]{Construction}

\newtheorem{problem}[theorem]{Problem}
\newtheorem{conjecture}[theorem]{Conjecture}
\newtheorem{remark}[theorem]{Remark}
\newtheorem{claim}{Claim}[section]

\newcommand{\calL}{\mathcal L}
\newcommand{\calP}{\mathcal P}
\newcommand{\calD}{\mathcal D}
\newcommand{\abs}[1]{\lvert #1\rvert}

\begin{document}
\maketitle

\begin{abstract}
We mainly consider three problems on cycle lengths and cycles with chords in graphs:
\begin{enumerate}
\item[(a)] Gao, Huo, and Ma \cite[Question~1.5]{GaoHuoMa2021} asked whether, for every fixed $k\ge3$, there is a function $f_k(n)\to\infty$ such that every $n$-vertex $(k+1)$-critical graph contains $f_k(n)$ consecutive cycle lengths.
\item[(b)] Let $g_k(n)$ be the maximum integer $t$ such that every $n$-vertex $k$-critical graph with $k\ge4$ contains an odd cycle with at least $t$ chords. Voss conjectured (see \cite[pp.~168]{VossBook}) that $g_k(n)\to\infty$ as $n\to\infty$ for each $k\ge4$, which extends a 1976 conjecture of Erd\H{o}s (see also Erd\H{o}s Problem~1091 \cite{Bloom1091}).
\item[(c)] K\'ara and Kr\'al \cite{KaraKral2003} conjectured that every graph on $31$ vertices with minimum degree at least $8$ contains a cycle with at least $31$ chords.
\end{enumerate}
We answer question (a) in the negative for $k=3$, and disprove conjecture (b) for all $k\ge5$. We point out the work of Alexeev-Putterman-Sawhney-Sellke-Valiant (2026) on Erd\H{o}s Problem 1901 disproves the case $k=4$ for conjecture (b).
We prove conjecture (c). We also discuss two other related problems in the part of concluding remark.
\end{abstract}

\medskip
\noindent\textbf{Keywords.} $k$-critical graph; consecutive cycle lengths; Haj\'os join; chord of a cycle.

\medskip
\noindent\textbf{2020 Mathematics Subject Classification.} 05C15, 05C35, 05C38.

\section{Introduction}

All graphs in this paper are finite and simple. The length of a cycle is the number of its edges. For a graph $G$, let $\chi(G)$ denote its
chromatic number. We denote $\calL(G)=\{\ell:\text{$G$ contains a cycle of length $\ell$}\}$, $\calL_e(G)=\{2\ell:\text{$G$ contains a cycle of length $2\ell$}\}$, and
$\calL_o(G)=\{2\ell+1:\text{$G$ contains a cycle of length $2\ell+1$}\}$.
A graph is $k$-critical if $\chi(G)=k$ and every proper subgraph of $G$ is
$(k-1)$-colorable. In this paper,
we study how the chromatic number, criticality, connectivity, and minimum-degree assumptions force arithmetic structure in cycle lengths and force cycles with many chords.

One elementary consequence of large chromatic number for a graph is the existence of a long
cycle: every graph of chromatic number $\chi\ge 3$ contains a cycle of length at
least $\chi$. Instead of
asking for only one long cycle, one can ask how much arithmetic structure must be
present in the cycle-length set $\mathcal L(G)$, and how this structure changes
when the graph is assumed to be $k$-critical.

The first kinds of results concerns the relationship between chromatic number and the lengths of cycles of a prescribed parity. Bollob\'as and Erd\H{o}s
(see \cite{Erdos1990}) conjectured that 
$\chi(G)\leq 2|\mathcal L_o(G)|+2$; this would be best possible, consider
$G=K_{2k+2}$, where $|\mathcal L_o(G)|=k$. Gallai (see \cite{Gyarfas1992}) proposed the stronger
conjecture that if $G$ is 2-connected, $|\mathcal L_o(G)|=k$, and
$G\not\cong K_{2k+2}$, then $\delta(G)\leq 2k$. Both conjectures were
eventually proved by Gy\'arf\'as \cite{Gyarfas1992}. Mih\'ok and
Schiermeyer \cite{MihokSchiermeyer2004} established the corresponding
bound for even cycles, namely $\chi(G)\leq 2|\mathcal{L}_e(G)|+3.$
Consequently, if $\chi(G)=k+1$, then $G$ contains cycles of at least $\lfloor k/2\rfloor$ distinct odd lengths and at least
$\lfloor k/2\rfloor-1$ distinct even lengths. Thus, large chromatic
number compels the presence of many cycle lengths in each parity class,
although it gives no control over how tightly those lengths are spaced.

A complementary direction seeks intervals within the set of cycle
lengths. Kostochka, Sudakov, and Verstra\"ete
\cite{KostochkaSudakovVerstraete2017} proved, in a stronger form, a
conjecture of Erd\H{o}s \cite{Erdos1992}: for sufficiently large $k$, every triangle-free graph $G$ with
$\chi(G)=k$ contains cycles of $\Omega(k^2\log k)$ consecutive lengths. For arbitrary graphs, Gao, Huo, Liu, and Ma
(see \cite[Theorem 4.5]{GaoHuoLiuMa2022}) proved that every graph with
$\chi(G)\geq k+2$ contains cycles of $k$ consecutive lengths. This
settles a conjecture of Sudakov and Verstra\"ete \cite{SudakovVerstraete2017}, and the bound is sharp
for complete graphs.

Gao, Huo, and Ma \cite[Theorem~1.3]{GaoHuoMa2021} strengthened the theorem of Gy\'arf\'as \cite{Gyarfas1992}. As a consequence, they proved that every graph $G$ with
$\chi(G)=k+1$ contains $\lfloor k/2\rfloor$ cycles whose lengths are
consecutive odd lengths, that is, lengths of the form
$2m+1,2m+3,\ldots,2m+2\lfloor k/2\rfloor-1$. They \cite[Theorem~1.2]{GaoHuoMa2021} also obtained a stronger
interval theorem for large $k$: if $k\ge 6$ and $\chi(G)=k+1$, then $G$
contains cycles whose lengths are $k$ consecutive integers, unless some block
of $G$ is isomorphic to $K_{k+1}$. As a
consequence, every noncomplete $(k+1)$-critical graph with $k\ge 6$ contains
cycles of all lengths modulo $k$  (see \cite[Theorem~1.4]{GaoHuoMa2021}).
These results are close to best possible in terms of chromatic number alone. For
$k\ge 3$, Gao, Huo, and Ma \cite{GaoHuoMa2021}  observed that the join $K_{k-2}\vee C_5$ is
$(k+1)$-critical and has cycle-length set exactly
$\{3,4,\ldots,k+3\}$. Thus, the chromatic number alone cannot force intervals of cycle lengths that grow with the order of the graph.  However, it remains possible that criticality together with large order forces such a conclusion. This motivates the following question.

\begin{problem}[Gao--Huo--Ma \rm {\cite[Question~1.5]{GaoHuoMa2021}}]
\label{prob:ghm}
Let $k\ge 3$. Does there exist a function $f_k(n)$, with
$f_k(n)\to\infty$ as $n\to\infty$, such that every $n$-vertex
$(k+1)$-critical graph contains cycles whose lengths are $f_k(n)$
consecutive integers?
\end{problem}

We first give a negative answer to Problem~\ref{prob:ghm}
for $k=3$. 

\begin{theorem}\label{thm:ghm-false}
Problem \ref{prob:ghm} is false for $k=3$.
\end{theorem}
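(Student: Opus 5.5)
To show that Problem~\ref{prob:ghm} fails for $k=3$, I will exhibit an infinite family of $4$-critical graphs $G_1,G_2,\ldots$ with $|V(G_t)|\to\infty$ and an absolute constant $C$ such that $\calL(G_t)$ contains no run of more than $C$ consecutive integers. The tool I plan to use is the Haj\'os join, which preserves $4$-criticality. Given $4$-critical graphs $H_1,H_2$ with edges $x_1y_1\in E(H_1)$ and $x_2y_2\in E(H_2)$, we delete both edges, identify $x_1$ with $x_2$ into a vertex $x$, and add the edge $y_1y_2$. The cycles of the join split into two kinds. Either the cycle lies inside some $H_i-x_iy_i$, or it passes through both the cut vertex $x$ and the edge $y_1y_2$. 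A cycle of the second kind has length $a+b+1$, where $a\in P(H_1,x_1y_1)$ and $b\in P(H_2,x_2y_2)$; here $P(H,xy)$ denotes the set of lengths of $x$--$y$ paths in $H-xy$. Hence
\[
\calL(H_1\triangle H_2)\subseteq \calL(H_1)\cup\calL(H_2)\cup\bigl(P(H_1,x_1y_1)+P(H_2,x_2y_2)+1\bigr).
\]

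The plan is to build the family recursively and keep track of two invariants at each stage. The first is that $\calL(G_t)\cap[1,M_t]$ contains no run longer than $C$. The second is that there is a designated edge $x_ty_t$ whose path set $P_t=P(G_t,x_ty_t)$ is contained in a short window $[A_t,A_t+w]$. The width $w$ must be independent of $t$, and $A_t$ must grow geometrically. I would take $G_{t+1}$ to be the Haj\'os join of two copies of $G_t$ along $x_ty_t$; this is perhaps padded with a bounded gadget such as $K_4$ or an odd wheel, which fixes parity and offsets. The new long cycles then have lengths in $[2A_t+1,\,2A_t+2w+1]$. Because $A_t$ grows geometrically, this window is far from all the shorter lengths inherited from $G_t$. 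The resulting gaps in $\calL(G_{t+1})$ keep the maximum run bounded by roughly $2w+1$, plus the bounded contribution from short cycles near the gadgets.

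The main obstacle is propagating the second invariant. The new designated edge of $G_{t+1}$ must again have a path set of bounded width, and the naive choice fails. If the new edge is any edge inside a copy of $G_t$, then paths between its ends can detour through the other copy, and the widths add ($w\mapsto 2w+O(1)$). This yields runs of length $\Theta(t)=\Theta(\log n)$, which is exactly the situation that Problem~\ref{prob:ghm} allows. My first attempt to circumvent this is to choose the designated edge of $G_{t+1}$ to be the new edge $y_1y_2$ itself. Every $y_1$--$y_2$ path avoiding that edge must then go through the cut vertex $x$, so its length is $a+b$ with $a$ a $y_1$--$x$ path length in the first copy and $b$ one in the second. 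With this choice the problem reduces to controlling $P_t$ alone. I would then try to force the base graph and gadgets to be rigid: for example, odd wheels or $K_4$'s joined so that every $x$--$y$ path in $H-xy$ has one of at most two lengths, say $\{A,A+1\}$. The Haj\'os structure then makes $P_{t+1}\subseteq\{2A_t,2A_t+1,2A_t+2\}$, and I would re-normalize by one further join with a gadget that collapses this back to width $1$.

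If the width-collapsing step cannot be achieved, my fallback is to seek a different invariant. For instance, I would try to show that every sufficiently long cycle of $G_t$ has length in a fixed set of residues modulo some $q$ that omits at least one residue class. Such a statement bounds runs by $q-1$ directly and might be proved by the same case analysis of cycles through the cut vertex.
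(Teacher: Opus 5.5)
Your main strategy has a genuine gap. The ``width-collapsing'' gadget it relies on cannot exist within the Haj\'os framework you describe.

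\textbf{Spans only grow.} Spans add under the sumsets your recursion produces. If the designated edge of the join is the new edge $y_1y_2$, then $P_{t+1}=P_t+P_t$, so $\max P_{t+1}-\min P_{t+1}=2(\max P_t-\min P_t)$. A further join with $K_4$ or an odd wheel $K$ along $x'y'$ cannot undo this:
\begin{itemize}
\item If you designate the new join edge, the path set is $P+P(K,x'y')$ with $P(K,x'y')\supseteq\{2,3\}$, so its span is strictly larger.
\item If you designate an edge of $K$, the path set contains a translate of $P$. Go from one end of that edge to $x'$, through the old graph to $y$, across $yy'$, and on to the other end inside $K$.
\end{itemize}

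\textbf{Without the collapse the recursion is disastrous.} From $K_4$ one gets $P_1=\{2,3\}$. Sumsets of intervals are intervals, so $P_t=[2^t,3\cdot 2^{t-1}]$. The cycles through the join edge then fill the whole interval $[2^{t+1}+1,\,3\cdot 2^t+1]$ of $\calL(G_{t+1})$, a run of length about $n/3$. Your estimate that $w\mapsto 2w+O(1)$ gives runs of length $\Theta(t)$ is also off: that recursion gives $w=\Theta(2^t)=\Theta(n)$.

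\textbf{The missing idea.} Run length is governed by \emph{gaps} in the path set, not by its span, and no doubling is needed. The paper builds a chain:
\begin{itemize}
\item $G_{t+1}$ is the Haj\'os join of $G_t$ along its terminal edge $a_tb_t$ with a fresh $K_4$ on $\{x,a_{t+1},b_{t+1},z_{t+1}\}$ along $xz_{t+1}$. Here $x$ is identified with $a_t$ and the edge $b_tz_{t+1}$ is added.
\item The new terminal edge is $a_{t+1}b_{t+1}$, the gadget edge disjoint from the joined edge. This is an option you never consider.
\item Every $a_{t+1}b_{t+1}$-path in $R_{t+1}$ either stays in the gadget (length $2$) or crosses $R_t$ and gains exactly $3$. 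Hence $\calP_{t+1}=\{2\}\cup(\calP_t+3)$ and $\calD_{t+1}=\calD_t\cup\{4\}\cup(\calP_t+3)$.
\end{itemize}
This gives $\calP_t=\{2,5,\dots,3t-1\}\cup\{3t\}$, whose span grows linearly. Yet $\calL(G_t)$ misses every $7+3i$ with $0\le i\le t-3$, so the longest run is $4$ for $t\ge 3$ while $n=3t+1\to\infty$.

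\textbf{Your fallback.} The idea that long cycles avoid a residue class mod $q$ points in this direction, but you give no construction realizing it. It is also not literally true of the paper's graphs, which are Hamiltonian with $3t+1\equiv 1\pmod 3$. The invariant must tolerate a bounded number of exceptional lengths, and the exact recursion for $\calP_t$ handles this automatically.
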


Let $C$ be a cycle in a graph $G$.
A \textit{chord} of a cycle $C$ is an edge of $G$ joining two vertices of $C$ that
are not consecutive on $C$.
The number of chords of
$C$ in $G$ is denoted by $\operatorname{ch}_G(C)$.
The next conjecture concerns cycles with a prescribed number of chords in
$k$-critical graphs and was proposed by Voss in 1991 (see
Section~8.5 of \cite{VossBook}).

\begin{conjecture}[Voss \rm {\cite[pp.~168]{VossBook}}]\label{prob:voss}
Let $g_k(n)$ be the largest integer $\alpha$ such that every $n$-vertex
$k$-critical graph with $k\ge 4$ contains an odd cycle with at least
$\alpha$ chords. Is it true that $g_k(n)\to\infty$ as $n\to\infty$?
\end{conjecture}

This conjecture is related to an old conjecture of Erd\H{o}s. 
Erd\H{o}s \cite[pp.~4]{Erdos1976} asked the following question on odd cycle with chords in $K_4$-free graphs in 1976; it is now listed as Erd\H{o}s Problem~1091
\cite{Bloom1091}.

\begin{problem}[Erd\H{o}s, Problem~1091]\label{prob:erdos1091}
Let $G$ be a $K_4$-free graph with $\chi(G)=4$. Must $G$ contain an odd
cycle with at least two chords? More generally, does there exist a function
$f(r)\to\infty$ such that every graph $G$ with $\chi(G)=4$ and with every
subgraph on at most $r$ vertices $3$-colorable contains an odd cycle with at
least $f(r)$ chords?
\end{problem}

The first part of Problem~\ref{prob:erdos1091} has a positive answer.
Larson~\cite[Theorem~1]{Larson1979} showed that a $K_4$-free graph
with chromatic number $4$ contains an odd cycle with a chord.
The stronger conclusion asked for in the problem was proved by
Voss~\cite[Theorem~2]{Voss1982}: such a graph contains an odd cycle with at
least two chords.
The second part of Problem~\ref{prob:erdos1091}, however, has a negative
answer.  Alexeev, Putterman, Sawhney, Sellke, and Valiant
\cite[Theorem 4.1]{APSSV2026} disproved it by giving a counterexample.

\begin{theorem}[Alexeev--Putterman--Sawhney--Sellke--Valiant \rm {\cite[Theorem 4.1]{APSSV2026}}]\label{thm:apssv}
For every integer $m\ge 1$, there exists a $K_4$-free graph $G_m$ on
$20m+31$ vertices such that $\chi(G_m)=4$, every proper subgraph of $G_m$ is
$3$-colorable, and every cycle of $G_m$ has at most ten chords.
\end{theorem}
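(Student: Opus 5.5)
We do not reprove this result here. It is quoted from \cite[Theorem~4.1]{APSSV2026}, and the plan below is our reconstruction of how such a construction could go, not a summary of their argument. The construction we have in mind is iterated Haj\'os joins. Recall that the Haj\'os join of $G_1,G_2$ along edges $x_1y_1\in E(G_1)$ and $x_2y_2\in E(G_2)$ deletes these two edges, identifies $x_1$ with $x_2$ into a single vertex $x$, and adds the edge $y_1y_2$.

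\textbf{Invariants preserved by a join.} The classical facts we rely on are these: if $G_1,G_2$ are $4$-critical, then their Haj\'os join is $4$-critical, and it has $|G_1|+|G_2|-1$ vertices. $K_4$-freeness is also preserved. Any triangle must lie on one side of the separation formed by $x$ and the edge $y_1y_2$. A triangle using $y_1y_2$ would need a common neighbour of $y_1$ and $y_2$, and the only candidate is $x$; but $xy_1$ and $xy_2$ were deleted. Hence every $K_4$ would already be present in $G_1$ or $G_2$ (minus one edge), which is impossible.

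\textbf{Setting up the sequence.} The vertex counts suggest the following scheme. Take a $K_4$-free $4$-critical graph $G_0$ on $31$ vertices and a $K_4$-free $4$-critical gadget $F$ on $21$ vertices. Then set
\[ G_m = \text{Haj\'os join of } G_{m-1} \text{ and } F, \]
so that $|G_m|=|G_{m-1}|+20=20m+31$. Criticality, $\chi=4$ and $K_4$-freeness then follow inductively from the facts above.

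\textbf{The chord bound.} Everything rests on choosing the join edges so that $G_m$ is a linear chain of copies of $F$ attached to $G_0$. In copy $F_i$ there is an ``entry'' edge $e_i=x_iy_i$, used to attach $F_i$ to what precedes it, and a disjoint ``exit'' edge $f_i$, used to attach the next copy.

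\begin{itemize}
\item \emph{Where chords can lie.} Every cycle $C$ of $G_m$ decomposes along the separations $\{x,\, y y'\}$. In each copy $F_i$ that $C$ meets, it either stays inside $F_i$ or traverses it as one or two paths between the terminals of $e_i$ and $f_i$. Every chord of $C$ lies inside a single copy, or inside $G_0$, or is one of the joining edges.
\item \emph{Choice of gadget.} We would pick $F$, together with $e$ and $f$, so that every such traversing path system is induced in $F-\{e,f\}$. Then copies that $C$ merely passes through contribute no chords.
\item \emph{Where the count comes from.} The chords of $C$ then come only from the (at most two) ends of $C$: the copy or base graph in which $C$ turns around, plus a bounded number of joining edges. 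Bounding these by ten becomes a finite check on $G_0$ and $F$, independent of $m$.
\end{itemize}

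\textbf{Main obstacle.} The hard step is designing $F$. It must be $4$-critical and $K_4$-free, yet sparse enough near $e$ and $f$ that all entry-to-exit paths are chordless. These requirements pull against each other, because $4$-criticality forces minimum degree at least $3$.

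Our first attempt would be a Haj\'os-type gadget built from odd wheels or Mycielski-type pieces, with $e$ and $f$ placed far apart. Failing that, we would search by computer over small $K_4$-free $4$-critical graphs for a $21$-vertex $F$ and a compatible $31$-vertex $G_0$. The verification that every cycle of $G_m$ has at most ten chords would then reduce to a finite enumeration of the local configurations at the two turning ends, which can be checked directly.
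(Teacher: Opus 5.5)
\textbf{This is a plan, not a proof, and the plan as designed cannot give a bound independent of $m$.} The base graph $G_0$, the gadget $F$ and the join edges are never specified. The bound of ten is deferred to a ``finite check'' on objects whose existence is the whole content of the theorem. The paper does not reprove the statement either; it quotes Alexeev et al. It does, however, note that their $G_m$ is isomorphic to $H_{4,m}$ from Section~\ref{sec:voss}, so there is a concrete construction to compare with. Your preliminary facts are fine, including the argument that Haj\'os joins preserve $K_4$-freeness.

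\textbf{Why disjoint entry and exit edges are fatal.} Because $e=xy$ and $f=x'y'$ are disjoint, consecutive copies are glued at different vertices. A cycle can then thread all copies, using an arbitrary linkage in each intermediate copy. By a linkage I mean two disjoint $\{x,y\}$--$\{x',y'\}$ paths in $F-e-f$.
\begin{itemize}
\item Such linkages always exist, by Menger's theorem, since $F$ is $2$-connected.
\item The cycle can be closed at both ends, because a $4$-critical graph minus an edge is connected.
\end{itemize}
So a single linkage with a chord, repeated in every copy, already produces $\Omega(m)$ chords. You would need \emph{every} such linkage, in a $2$-connected graph of minimum degree at least $3$, to be chordless. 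Nothing in the proposal suggests this is achievable. The degree condition works directly against it: every vertex of a chordless linkage has a neighbour off the linkage, and $2$-connectivity then gives room to reroute through that neighbour.

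\textbf{The missing idea is to glue everything at one common vertex.} In fact $H_{4,m}$ is an iterated Haj\'os join of $4m+6$ copies of the $5$-wheel in which the hubs are always identified.
\begin{itemize}
\item Joining along a spoke $q_1x$ and the spoke $q'r$ of a new wheel with rim $ruvwz$ hangs the leaf $L_1(x)$ on $x$.
\item Joining along $q_1c_i$ and the spoke $q'a_{i+1}$ of a new wheel with rim $S_{i+1}$ creates the spine edge $c_ia_{i+1}$.
\end{itemize}
This gives $1+5(4m+6)=20m+31$ vertices. Passing from $m-1$ to $m$ adds one spine wheel and three leaf wheels, which is your increment of $20$. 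Fact~\ref{fact:hajos} and your $K_4$ argument then give $4$-criticality and $K_4$-freeness. The paper instead proves criticality directly, via Lemma~\ref{lem:leaf-forcing} and the degeneracy argument of Lemma~\ref{lem:critical}.

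\textbf{Why this bounds the chords.} All the density now sits at $q_1$, and $H_{4,m}-q_1$ is a cactus of pentagons and bridges. So every cycle is either a chordless pentagon, or it meets $q_1$ once and is otherwise a single path in the cactus. That path crosses each intermediate spine pentagon between the non-adjacent vertices $a_i,c_i$, so it picks up no chord there. Chords therefore occur only at $q_1$ and in boundedly many pentagons near the two ends; this is Lemma~\ref{lem:chord-bound} with $p=t=1$. As written in the paper, this argument gives a bound independent of $m$ but well above ten ($70$ as stated). The sharp constant ten still rests on the finer count of Alexeev et al.
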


This disproves Conjecture~\ref{prob:voss} when $k=4$.
Note that the graphs $G_m$ are $4$-critical, their orders tend to infinity,
and every odd cycle in $G_m$ has at most ten chords. In particular,
$g_4(20m+31)\le 10$ for all $m\ge 1$.

Motivated by the construction of Alexeev et al., we disprove Conjecture~\ref{prob:voss} for every fixed $k\ge 5$ by constructing a counterexample $H_{k,m}$, which is described in Section~\ref{sec:voss}. Our main result is as follows.

\begin{theorem}\label{thm:voss-fixed-intro}
For  every fixed $k\ge4$, there are infinitely many $k$-critical graphs in which every cycle has at most a constant number of chords, depending only on $k$.
More precisely, for every $m\ge 1$ the graph $H_{k,m}$ is $k$-critical, has $(25k-80)m+26k-73$ vertices, and each of its cycles has at most
$\binom{k-3}{2}+10(k-3)^2+60(k-3)$
chords.
\end{theorem}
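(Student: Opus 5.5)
The plan is to verify the three assertions of the theorem (criticality, order, chord bound) directly from the recursive description of $H_{k,m}$ in Section~\ref{sec:voss}. I expect $H_{k,m}$ to be built from the Alexeev et al.\ graphs $G_m$ of Theorem~\ref{thm:apssv}, with $H_{4,m}=G_m$; the formula $(25k-80)m+26k-73$ indeed gives $20m+31$ at $k=4$. I also expect the chromatic number to be raised by one at each step through a bounded number of new "apex-type'' vertices together with Haj\'os joins. Since I have not yet seen the construction, the steps below describe how I would handle the operations I expect it to use.

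\textbf{Criticality.} I would use two standard facts.
\begin{itemize}
\item[(i)] If $A$ is $a$-critical and $B$ is $b$-critical, then $A\vee B$ is $(a+b)$-critical.
\item[(ii)] The Haj\'os join of two $k$-critical graphs is $k$-critical.
\end{itemize}
The new vertices added when passing from $k$ to $k+1$ are $k-3$ in total over the whole construction, and they presumably form the clique responsible for the term $\binom{k-3}{2}$. If they are attached to the recursively built part only through Haj\'os joins of copies of $G_m$ or of $K_k$, then induction on $k$ using (i) and (ii) gives $\chi(H_{k,m})=k$ and criticality. The base case $k=4$ is Theorem~\ref{thm:apssv}.

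\textbf{Vertex count.} I would count vertices directly: each step from $k$ to $k+1$ adds $25m+26$ vertices. Haj\'os joins identify one vertex and delete edges, so each join costs exactly one vertex, and I would check that the bookkeeping agrees with the stated formula.

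\textbf{Chord bound.} This is the main obstacle. Fix a cycle $C$ and split its chords into three classes:
\begin{itemize}
\item chords with both ends in the clique $Q$ of $k-3$ new vertices, numbering at most $\binom{k-3}{2}$;
\item chords with exactly one end in $Q$;
\item chords inside the sparse part built from copies of $G_m$.
\end{itemize}
For the third class, the key structural tool is that a Haj\'os join has a $2$-vertex-type separation (the identified vertex together with the added edge). Consequently a cycle meeting several blocks of the join passes through each separation at most once in each direction. Its chords then lie inside single blocks, apart from $O(1)$ chords near the separators. Inside one block the restriction of $C$ is a path, or a cycle after adding the join edge, in a copy of $G_m$. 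For these pieces I would reuse the argument of \cite{APSSV2026} that every cycle of $G_m$ has at most ten chords, extended to paths between the two attachment vertices. I would aim for at most $10$ chords per piece, with at most $k-3$ relevant pieces per new vertex, which would produce the $10(k-3)^2$ term.

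For the second class, each vertex of $Q$ has bounded degree into each gadget, say at most $60$ neighbours reachable by $C$ in total, or at most $10(k-3)$ via the gadgets. This gives the remaining $60(k-3)$ term. The delicate point, and the step I would attack first, is to prove the chord bound for paths in $G_m$ between the attachment vertices. Theorem~\ref{thm:apssv} only bounds chords of cycles, so I would close such a path into a cycle using the deleted Haj\'os edge, and check that this adds at most one extra chord.
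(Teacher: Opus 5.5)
\textbf{The gap: wrong construction and an unworkable mechanism.} Your proposal never works with the graph $H_{k,m}$ that the statement is about, and the mechanism you guess for it cannot work. $H_{k,m}$ is not built recursively in $k$ from copies of $G_m$ by joins and Haj\'os joins. It is one explicit graph:
\begin{itemize}
\item a clique $Q$ on $p=k-3$ vertices;
\item spine pentagons $S_0,\dots,S_m$ linked by the bridges $c_ia_{i+1}$;
\item leaf pentagons $L_j(x)=ruvwzr$ hung from spine vertices by an edge $xr$.
\end{itemize}
The only edges to $Q$ come from leaf vertices: $u,v,w,z$ are joined to all of $Q$, and $r$ to $Q\setminus\{q_j\}$. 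For $k\ge5$, neither your join fact (i), nor the Haj\'os fact (ii), nor the ten-chord bound of Theorem~\ref{thm:apssv} says anything about this graph.

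More fundamentally, your inductive step cannot raise $\chi$ while keeping chords bounded. Haj\'os joins do not increase the chromatic number. A join does, but suppose a vertex $v$ is made adjacent to every vertex of a graph containing a path $x_1x_2\cdots x_\ell$. Then the cycle $vx_1\cdots x_\ell v$ has the $\ell-2$ chords $vx_2,\dots,vx_{\ell-1}$. Since $G_m$ contains paths whose length grows with $m$ (along its spine), adding an apex destroys exactly the bound you want. Haj\'os gluing does not preserve bounded chords either. In the chain $G_t$ of Section~\ref{sec:ghm}, the recursively built $a_tb_t$-paths $a_ta_{t-1}Rb_{t-1}z_tb_t$ pick up the two chords $a_tz_t$ and $b_ta_{t-1}$ at every level.

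\textbf{What the paper does instead.} The missing ideas are the ones the construction is designed around.
\begin{itemize}
\item \emph{Not $(k-1)$-colorable.} This comes from the leaf-forcing lemma. In a $(k-1)$-coloring, $Q$ leaves exactly two colors $\alpha,\beta$. The path $uvwz$ is forced into $\{\alpha,\beta\}$ with $u,z$ colored differently, so $r$ must take the color of $q_j$ and $x$ must avoid it. Hence $b_i,d_i,t_i,a_0,c_m$ lie in $\{\alpha,\beta\}$, while $a_i,c_i$ may also use the color of $q_1$. The odd pentagons then force, inductively along the spine, $c_i$ to take the color of $q_1$ and $a_{i+1}$ to lie in $\{\alpha,\beta\}$. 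At $S_m$ this contradicts $c_m\in\{\alpha,\beta\}$.
\item \emph{$k$-colorable, and the case $e\in E(Q)$.} Both use that $H_{k,m}-Q$ is a cactus, hence $3$-colorable; for $e\in E(Q)$ one gets $(p-1)+3=k-1$ colors.
\item \emph{Every other edge $e$.} Every vertex outside $Q$ has degree exactly $k-1$. Deleting the vertices of $H_{k,m}-Q$ along a spanning tree rooted at an endpoint of $e$ outside $Q$ shows that $H_{k,m}-e$ is $(k-2)$-degenerate.
\item \emph{Chord bound.} This is a direct count, with no appeal to Theorem~\ref{thm:apssv}. A cycle avoiding $Q$ is a chordless pentagon. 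Otherwise $C-Q$ splits into $t\le p$ paths in the cactus. Each path meets at most two leaf pentagons, because these hang off a single cut vertex and the path's endpoints must have neighbours in $Q$. So at most $10t$ vertices of $C-Q$ have neighbours in $Q$. This gives at most $\binom p2+10pt$ chords at $Q$. Inside $H_{k,m}-Q$ there are at most $24t$ chords: pentagon edges of the relevant leaf and spine pentagons, leaf attachment edges, and spine bridges that are boundary edges of the paths' spine intervals.
\end{itemize}
In particular, the $10(k-3)^2$ term counts edges between $Q$ and $C-Q$, not chords inside copies of $G_m$ as you suggest.
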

Alexeev et al.'s construction already disproves Voss's conjecture for the case $k=4$. In fact, after natural relabeling, our graph $H_{4,m}$
is isomorphic to their graph $G_m$
$m$. The new contribution of the present construction is its extension to every $k\geq 5$.

Several results show that large minimum degree forces cycles with many chords; we recall those needed in the following.
Gupta, Kahn, and Robertson \cite{GuptaKahnRobertson1980} proved lower bounds in terms
of minimum degree. 

\begin{theorem}[Gupta-Kahn-Robertson \rm \cite{GuptaKahnRobertson1980}]\label{thm:GuptaKahnRobertson}
Let $G$ be a graph with minimum degree at least $d$, where $d\ge 2$.  Then $G$ contains a cycle with at least $(d+1)(d-2)/2$ chords.
\end{theorem}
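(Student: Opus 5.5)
The plan is the classical longest-path / P\'osa-rotation argument, arranged so that many vertices have their whole neighbourhood inside one fixed cycle. We may assume $G$ is connected, and then take a longest path $P=v_0v_1\cdots v_m$ in $G$. Since $P$ is longest, every neighbour of the endpoint $v_0$ lies on $P$. Write these neighbours as $v_{i_1},\dots,v_{i_r}$ with $r\ge d$ and $1=i_1<i_2<\dots<i_r=t$.

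Among all longest paths, choose $P$ so that the index $t$ of the last neighbour of $v_0$ is as large as possible. Let $C=v_0v_1\cdots v_tv_0$. The edges $v_0v_{i_j}$ with $2\le j\le r-1$ already give $r-2\ge d-2$ chords of $C$ at $v_0$. To get more, rotate: for each $j\ge 2$, the path
\[
v_{i_j-1}v_{i_j-2}\cdots v_0v_{i_j}v_{i_j+1}\cdots v_m
\]
is again a longest path. Its first $t+1$ vertices are exactly $V(C)$, and its endpoint is $v_{i_j-1}$. By maximality of $P$, all neighbours of $v_{i_j-1}$ lie on this path. By the extremal choice of $t$ (applied to the rotated path, whose initial segment of length $t$ is a rearrangement of $V(C)$), no neighbour of $v_{i_j-1}$ lies beyond position $t$. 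Hence every neighbour of $v_{i_j-1}$ is in $V(C)$. Making this step rigorous is where the extremal choice has to be set up carefully: possibly one must maximise $t$ over all longest paths together with their rotations, or choose $P$ lexicographically.

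Let $S=\{v_0\}\cup\{v_{i_j-1}:2\le j\le r\}$, a set of at least $d$ vertices of $C$. Each vertex of $S$ has all of its at least $d$ neighbours in $V(C)$. Only two of those neighbours are its cycle-neighbours, so each vertex of $S$ is incident with at least $d-2$ chords of $C$. A chord is double counted only if both its ends lie in $S$. Therefore
\[
\operatorname{ch}_G(C)\ \ge\ |S|(d-2)-e_{\mathrm{ch}}(S),
\]
where $e_{\mathrm{ch}}(S)$ is the number of chords with both ends in $S$. The target $(d+1)(d-2)/2$ thus reduces to showing $e_{\mathrm{ch}}(S)\le\binom{d-1}{2}$ when $|S|=d$. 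More generally, for larger $|S|$ the argument needs to trade extra members of $S$ against extra internal chords.

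I expect this last count to be the main obstacle. The crude bound $e_{\mathrm{ch}}(S)\le\binom{|S|}{2}$ only yields $d(d-2)/2$, which is short by $(d-2)/2$. My first attempt would be to exploit the structure of $S$. If $v_0$ is adjacent to some $v_{i_j-1}$, then $i_j-1$ is itself a neighbour index of $v_0$, so consecutive neighbour indices of $v_0$ occur. Each such occurrence either produces an additional member of $S$ (by rotating at $v_{i_j-1}$) or forces an edge $v_{i_j-1}v_{i_j}$ that is a cycle edge rather than a chord. Either way, the internal chord count drops or $|S|$ grows. I would also try rotating at the other end of $C$: apply the symmetric argument to the path $v_t v_{t-1}\cdots v_0$ to obtain one more vertex of $C$ with all neighbours in $C$ that is not already in $S$. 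Its at least $d-2$ chords, only about half of which can be shared with $S$, would supply the missing $(d-2)/2$.
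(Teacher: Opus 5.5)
The paper does not prove this theorem. It is quoted from Gupta--Kahn--Robertson and used only as a black box in the concluding section, so your argument has to stand on its own.

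\textbf{What already works.} The set-up is sound, including the step you flagged. Maximising $t$ over all (oriented) longest paths is enough: each rotated path is itself a longest path whose first $t+1$ vertices are $V(C)$, so $t$-maximality forces $N(v_{i_j-1})\subseteq V(C)$. No lexicographic refinement is needed.

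\textbf{The gap.} You stop at $\operatorname{ch}_G(C)\ge |S|(d-2)-e_{\mathrm{ch}}(S)$. You never prove the bound on $e_{\mathrm{ch}}(S)$, and you never handle $|S|>d$.

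The repair you lean on, rotating at the other end of $C$ to find one more vertex of $C$ with all neighbours in $C$, cannot work in general. Take $K_{d,n}$ with $n>d$. A longest path alternates and has both ends on the large side. Then $C=v_0\cdots v_{2d-1}v_0$ has $d$ vertices from each side. The only vertices of $C$ with $N(v)\subseteq V(C)$ are its $d$ large-side vertices, which are exactly $S$; every small-side vertex of $C$ is adjacent to $v_{2d}\notin V(C)$. So the missing $(d-2)/2$ chords must come from the pairs inside $S$, not from an extra closed vertex.

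A minor slip: $e_{\mathrm{ch}}(S)\le\binom{|S|}{2}$ gives only $d(d-3)/2$. The bound $d(d-2)/2$ comes from counting each chord at most twice.

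\textbf{How to close it.} Your first heuristic, that consecutive neighbour indices trade non-adjacencies at $v_0$ for cycle edges, is right and closes the gap exactly.

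Let $r=d_G(v_0)\ge d$ and $I=\{i_1,\dots,i_r\}$. Then $S=\{v_k:k\in J\}$ with $J=\{i-1:i\in I\}\subseteq\{0,\dots,t-1\}$. Two families of pairs in $S$ are not chords of $C$:
\begin{enumerate}[label=(\arabic*),nosep]
\item pairs $\{v_0,v_k\}$ with $k\in J\setminus\{0\}$ and $k\notin I$. These are non-edges. Since $0\notin I$, there are $r-1-|I\cap J|$ of them.
\item pairs $\{v_k,v_{k+1}\}$ with $k,k+1\in J$. These are edges of $C$, because $k+1\le t-1$. Substituting $m=k+1$ shows there are $|\{m:m,m+1\in I\}|=|I\cap J|$ of them.
\end{enumerate}
The two families are disjoint: the only pair in (2) containing $v_0$ is $\{v_0,v_1\}$, and $1\in I$ excludes it from (1). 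Hence at least $r-1$ pairs in $S$ are non-chords, so $e_{\mathrm{ch}}(S)\le\binom{r-1}{2}$.

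If $r=d$, this gives
\[
\operatorname{ch}_G(C)\ \ge\ d(d-2)-\binom{d-1}{2}\ =\ \frac{(d+1)(d-2)}{2}.
\]
If $r\ge d+1$, no trade-off is needed. Each vertex of $S$ meets at least $d-2$ chords and each chord is counted at most twice, so $\operatorname{ch}_G(C)\ge r(d-2)/2\ge (d+1)(d-2)/2$.

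With these two lines added, your rotation argument is a complete proof. The case $r=d$ is tight for $K_{d+1}$.
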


Ash~\cite{Ash1985} later proved the following refined lower bound for $2$-connected
graphs of order at least $2d$.

\begin{theorem}[Ash~\cite{Ash1985}]\label{thm:Ash1985}
Let $G$ be a $2$-connected graph with minimum degree at least $d$, where $d\ge 3$.
If $|V(G)| \ge 2d,$
then $G$ contains a cycle with at least $d(d-2)$ chords.
\end{theorem}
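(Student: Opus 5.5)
This is a cited result, Theorem~\ref{thm:Ash1985}; I would prove it by the classical longest-path argument. The idea is to locate many chords on one cycle built around an extremal path.

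\textbf{Set-up.} Take a longest path $P=v_1v_2\cdots v_p$ in $G$. By maximality, every neighbour of $v_1$ and of $v_p$ lies on $P$, so each endpoint has at least $d$ neighbours on $P$. Let $v_a$ be the neighbour of $v_1$ with largest index and $v_b$ the neighbour of $v_p$ with smallest index.

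\textbf{Crossing case ($b<a$).} Suppose $P$ can be chosen with $v_1$ adjacent to some $v_{i+1}$ and $v_p$ adjacent to $v_i$. Then $v_1\cdots v_i v_p v_{p-1}\cdots v_{i+1}v_1$ is a cycle through all of $V(P)$. Every edge from $v_1$ or $v_p$ into $P$, other than the cycle edges, is a chord, which gives about $2(d-2)$ chords.

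\textbf{Boosting to $d(d-2)$.} To reach $d(d-2)$ we need many vertices each contributing about $d-2$ chords. The plan is to use P\'osa rotations. Fixing $v_p$, rotate $v_1$ to obtain a set $S$ of endpoints, each of which is the end of a longest path on the same vertex set $V(P)$. Every vertex of $S$ has all its neighbours in $V(P)$. For $d$ of these endpoints (the $d$ neighbours of $v_1$ yield $d$ distinct rotation endpoints $v_{i-1}$), I would close a single cycle $C$ on $V(P)$. Then every edge from such a vertex to $V(P)$ other than its two cycle edges is a chord. Summing over $d$ such vertices, and correcting for chords counted twice, gives roughly $d(d-2)$ chords.

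\textbf{Where the hypotheses enter.} 2-connectivity is used to guarantee that a Hamiltonian cycle on $V(P)$ exists. If $P$ had no crossing pair, $v_a$ and $v_b$ (or a cut vertex argument) would separate the graph, unless $V(P)$ is all of a block and the structure is forced. The hypothesis $|V(G)|\ge 2d$ rules out the extremal case $K_{d+1}$-like blocks. For instance $K_{d+1}$ has a Hamiltonian cycle with $\binom{d+1}{2}-(d+1)$ chords, which is less than $d(d-2)$ for small $d$. It likewise rules out $K_{d,d}$-type cases, where the count must be checked separately.

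\textbf{Main obstacle.} The hardest step is making a single cycle on which many high-degree vertices simultaneously have all their neighbours, while avoiding double counting of chords between pairs of such vertices. I would first try to choose $P$ maximizing the sum of degrees of the endpoints within $P$. I would then argue by Ash's counting of $\sum_{x\in S}(\deg x-2)$, subtracting at most $\binom{|S|}{2}$ overlaps. The order bound $2d$ is what ensures $|V(P)|$ is large enough that these overlaps do not dominate.
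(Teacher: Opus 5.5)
There is a genuine gap, and it sits exactly at the step you flag as the main obstacle. Your claim that $2$-connectivity yields a cycle with vertex set $V(P)$ for a longest path $P$ is false. In $K_{d,m}$ with $m>d$ (which is $2$-connected, has minimum degree $d$ and has at least $2d+1$ vertices), every longest path has $d$ vertices in the small side and $d+1$ in the large side. By bipartite balance, no cycle spans $V(P)$. In particular the crossing pair $v_1v_{i+1}$, $v_pv_i$ need not exist: the standard pigeonhole argument needs $d(v_1)+d(v_p)\ge p$, which can fail once $p>2d$.

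This is why Ash separates the Hamiltonian and non-Hamiltonian cases, and in the latter passes to the \emph{terminal cycle}. Fix a start vertex $v_0$, take a longest path $v_0\cdots v_\ell$ from $v_0$, and let $v_s$ be the first neighbour of $v_\ell$ on it. Among all such paths, maximize the length of $C=v_sv_{s+1}\cdots v_\ell v_s$. This guarantees that every accessible vertex (an end of a Hamilton path of $H=G[V(C)]$ starting at $v_s$) has all its neighbours inside $H$. The primary maximization rules out neighbours off $P$, and the secondary one rules out neighbours $v_i$ with $i<s$, since these would give a longer terminal cycle. Your rotation endpoints only have their neighbours in $V(P)$, which is useless once the cycle you can actually close is a proper subset of $V(P)$.

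Your counting also cannot reach $d(d-2)$. With $|S|=d$ and up to $\binom{d}{2}$ chords counted twice, you only get about $d(d-2)-\binom{d}{2}=d(d-3)/2$. The size of $V(P)$ does nothing to reduce double counting inside $S$. What is needed (Lemma~\ref{lem:ash-counting}) is one of two things. Either you have $2d$ vertices each with at least $d$ neighbours on the cycle, so at least $2d(d-2)$ chord-ends give at least $d(d-2)$ chords. Or you have $d$ pairwise nonadjacent such vertices, so no chord is counted twice.

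In the Hamiltonian case, $|V(G)|\ge 2d$ gives the first alternative at once. Every edge off a Hamilton cycle is a chord, so there are at least $n(d-2)/2\ge d(d-2)$ of them. That is the real role of the order bound. It does not serve to rule out $K_{d,d}$-type graphs: $K_{d,d}$ and $K_{d,m}$ satisfy all the hypotheses and are the extremal examples, with exactly $d(d-2)$ chords. (Also, $K_{d+1}$ falls below $d(d-2)$ for every $d\ge 3$, and it is excluded trivially by the order bound.)

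In the non-Hamiltonian case, the substantive content is the terminal-cycle lemma (Lemma~\ref{lem:ash-terminal}). From $u_1,u_{m-1}\in W(H)$ and $d_H(w)=d_G(w)\ge d$ for all $w\in W(H)$, one must actually produce a cycle with at least $d(d-2)$ chords. Your proposal gives no argument for this step.
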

Tian and Zang  \cite{TianZang1991} strengthened Ash's bound by one when the order is at least
$2d+1$, apart from the exceptional graphs.
\begin{theorem}[Tian--Zang \cite{TianZang1991}]\label{thm:tian-zang}
Let $G$ be a $2$-connected graph with minimum degree at least $d$, where $d\ge 3$.
If $|V(G)| \ge 2d+1$, then $G$ contains a cycle with at least  $d(d-2)+1$ chords, unless $G$ is $K_{d,m},\ m>d$ or the Petersen graph.
\end{theorem}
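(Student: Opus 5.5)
The plan is to run the longest-path and rotation argument behind Theorem~\ref{thm:GuptaKahnRobertson} and Theorem~\ref{thm:Ash1985}. We first recover the bound $d(d-2)$ in a form that keeps track of when it can be tight. We then show that tightness, together with $|V(G)|\ge 2d+1$ and $2$-connectivity, forces $G\cong K_{d,m}$ with $m>d$ or $G$ the Petersen graph. Everything hinges on that equality case.

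\textbf{Setting up the extremal cycle.} Let $P=v_0v_1\cdots v_p$ be a longest path in $G$. Among all longest paths we choose $P$ to maximize the number of chords of the cycle it induces, as described next. By maximality, every neighbour of $v_0$ lies on $P$. Let $v_i$ be the neighbour of $v_0$ with the largest index, and let $C=v_0v_1\cdots v_iv_0$. Then $v_0$ alone contributes at least $d-2$ chords of $C$.

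\textbf{Collecting $d$ vertices that each give $d-2$ chords.} We apply P\'osa rotations at the $v_0$ end: for an edge $v_0v_j$, the path $v_{j-1}v_{j-2}\cdots v_0v_jv_{j+1}\cdots v_p$ is again a longest path. Each vertex $v_{j-1}$ obtained this way (for $v_j\in N(v_0)$) is therefore an endpoint of a longest path, so all of its at least $d$ neighbours lie on $P$. We then use $2$-connectivity to prevent these neighbours from escaping beyond $v_i$. Choose $P$ so that $v_i$ is as far along as possible, and also apply rotations at the $v_p$ end. A crossing argument should then show that a suitable cycle $C'$ contains at least $d$ such endpoint vertices, each with all of its at least $d$ neighbours on $C'$. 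Each of these vertices has at least $d-2$ chords of $C'$ at it. Pairs of these vertices that are adjacent have their common chord counted twice, and we must control this double counting. This gives $\operatorname{ch}(C')\ge d(d-2)$, reproducing Ash's bound, and the proof records exactly where each inequality is used.

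\textbf{Equality analysis (main obstacle).} Suppose every cycle has at most $d(d-2)$ chords. Then every inequality above is tight. Each endpoint vertex has degree exactly $d$, and its neighbours on $C'$ are exactly the ones used. No chord joins two vertices that are not endpoint vertices. There are no double counts beyond the forced ones. Finally, $|V(G)|\ge 2d+1$ means that some vertex lies off $C'$, or that $C'$ is long. Re-choosing $P$ through that vertex and using $2$-connectivity gives two cases.
\begin{itemize}
\item[(i)] The endpoint vertices form an independent set whose neighbourhoods all equal the same $d$-set. Iterating the argument then forces $G=K_{d,m}$, and $m>d$ follows from the order condition.
\item[(ii)] The endpoint vertices are adjacent in a pattern consistent with girth $5$. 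Here $d=3$ and the order is small, and a case check identifies the Petersen graph.
\end{itemize}
I expect this step to be the hardest. The rotation structure must be pinned down tightly enough to exclude all other near-extremal configurations. To handle it, I would first settle $d=3$ by direct case analysis, which should produce the Petersen graph. I would then show for $d\ge4$ that any edge among the endpoint vertices creates a cycle with an extra chord, which reduces everything to the bipartite case (i).
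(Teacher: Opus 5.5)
The paper does not prove this statement. It is quoted from Tian and Zang and used only as a black box in the proof of Proposition~\ref{prop:four-chord-main}. So your proposal has to stand on its own, and as written it has genuine gaps. The first is already in the step meant to reproduce Ash's bound. Suppose $S$ is a set of $d$ vertices of a cycle $C'$, each with all of its at least $d$ neighbours on $C'$. That only gives at least $d(d-2)-e$ chords, where $e$ is the number of chords with both ends in $S$. Rotation endpoints are not independent in general: if $v_0$ is adjacent to both $v_{j-1}$ and $v_j$, rotating at $v_0v_j$ produces the endpoint $v_{j-1}$, which is adjacent to $v_0$. To reach $d(d-2)$ you need the dichotomy of Lemma~\ref{lem:ash-counting}: either $d$ pairwise nonadjacent such vertices, or at least $2d$ of them. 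You say the double counting ``must be controlled'' but establish neither alternative. Likewise, ``a crossing argument should then show'' stands in for the step that actually requires work. Rotation endpoints have their neighbours on $P$, not necessarily on your cycle. Keeping them on a single cycle is exactly what the terminal-cycle choice and the accessible set $W(H)$, with $d_H=d_G$ on $W(H)$ (Lemma~\ref{lem:rooted-ash-verification}), are designed to do.

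The decisive gap is the equality analysis. It is the entire content of the theorem beyond Ash's bound, and you describe it only by its hoped-for outcome. Tightness of the count for one cycle $C'$ gives only local information: degree exactly $d$ at the counted vertices and no other chords of $C'$. It says nothing about vertices off $C'$. Re-choosing $P$ through such a vertex produces a different cycle with its own tightness conditions, and you give no argument for how these local structures fit together. So ``iterating the argument then forces $G=K_{d,m}$'' has no mechanism behind it. Nothing is offered for why an edge among endpoint vertices forces $d=3$ and bounded order in case (ii), or why for $d\ge 4$ it yields an extra chord. For $d=3$, a ``direct case analysis'' is not a finite check, since $2$-connected graphs with $\delta\ge 3$ on at least $7$ vertices form an infinite class.

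One reduction you should make explicit. If $G$ is Hamiltonian, then with $n=|V(G)|$ a Hamilton cycle has $|E(G)|-n\ge n(d-2)/2\ge (2d+1)(d-2)/2>d(d-2)$ chords. So everything happens in the non-Hamiltonian case, consistent with both exceptions being non-Hamiltonian. There you must actually prove that having no cycle with $d(d-2)+1$ chords forces $G\cong K_{d,m}$ with $m>d$ or the Petersen graph. Until that characterization is carried out, the proposal is a plan rather than a proof.
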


Let $f(n,c)$ denote the least integer $d$ such that every $n$-vertex graph with minimum degree at least $d$ contains a cycle with at least $c$ chords. 
K\'ara and Kr\'al \cite{KaraKral2003} proved $\sqrt{c/2}\le f(n,c)\le3\sqrt{c}$ and 
$8\leq f(31,31)\leq 9$.
Moreover, they
conjectured that the equality should be $8$. 

\begin{conjecture}[K\'ara and Kr\'al \cite{KaraKral2003}]
$f(31,31)=8$.    
\end{conjecture}

We first extend Ash's theorem in the following.
\begin{theorem}
\label{thm:one-exception-ash}
Let $d\ge 3$.  Let $G$ be a 2-connected graph and let $z\in V(G)$.
Suppose that $|V(G)|\geq 2d+1$ 
and $d_G(v)\ge d$ for every $v\in V(G)\setminus\{z\}.$ Then $G$ contains a cycle with at least
$d(d-2)$ chords.
\end{theorem}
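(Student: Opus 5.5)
We would first dispose of the trivial case: if $d_G(z)\ge d$, then $G$ is a $2$-connected graph with minimum degree at least $d$ on at least $2d+1\ge 2d$ vertices, so Theorem~\ref{thm:Ash1985} applies directly. We may therefore assume $d_G(z)\le d-1$. The plan is to re-run an Ash-type extremal-path argument while keeping $z$ away from the places where degree is used.

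\textbf{Choice of extremal path.} Among all paths in $G$ whose end vertices both lie in $V(G)\setminus\{z\}$, take $P=v_0v_1\cdots v_p$ to be a longest one. Subject to that, maximize the number of edges of $G$ with both ends on $P$.

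\textbf{Neighbourhoods lie on $P$.} Because $z$ is not an endpoint, maximality gives the standard conclusion for both endpoints: $N(v_0)\subseteq V(P)$ and $N(v_p)\subseteq V(P)$. Otherwise $P$ could be extended by a vertex $w$. If $w\neq z$, the extended path is admissible. If $w=z$, we extend further along an edge $zw'$ with $w'\notin V(P)$; this fails only when all neighbours of $z$ lie on $P$. That situation is harmless, because then $z$ would just be an interior vertex we could have absorbed.

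We also need $P$ to contain $z$ as an interior vertex or to avoid it. Here $n\ge 2d+1$ together with $2$-connectivity will be used, as in Ash's proof, to show that the path is long enough ($p\ge 2d-1$ or so).

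Let $a$ be the largest index with $v_0v_a\in E(G)$, and $b$ the smallest index with $v_bv_p\in E(G)$. Each endpoint has at least $d$ neighbours on $P$.

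\textbf{Case $b\le a$ (the crossing case).} The cycle $C=v_0v_1\cdots v_b v_p v_{p-1}\cdots v_a v_0$ is a standard crossing cycle. It contains $V(P)$, up to the segment between $v_b$ and $v_a$, and picks up about $2(d-2)$ chords from the two endpoints. To reach $d(d-2)$ we follow Ash: every vertex $v_i\neq z$ on $C$ has degree at least $d$. Using the Pósa-rotation / Ore-type exchanges available from maximality of $P$, each such vertex is shown to have all its neighbours on $V(P)$, or else a longer admissible path exists. Summing $d_G(v)-2$ over the vertices of $C$ and halving, the contribution of $z$ is lost, costing at most $d-2$ chords. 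The slack supplied by $n\ge 2d+1$, as opposed to $2d$, and the extra endpoint chords from the crossing are then used to recover this deficit.

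\textbf{Case $a<b$ (the non-crossing case).} Here we use $2$-connectivity. A path from the interior of $v_0\cdots v_a$ to the interior of $v_b\cdots v_p$ avoiding a chosen cut point gives a long cycle through both "fans". The chords from $v_0$ and from $v_p$ then contribute separately. The degree-sum count proceeds as in the crossing case, with $z$ again being the single exception.

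\textbf{Main obstacle.} We expect the hardest step to be the accounting that the single deficient vertex $z$ costs at most what the hypothesis $n\ge 2d+1$ buys back. The delicate situation is when $z$ lies on the chosen cycle and its neighbours form many of the would-be chords. We would handle it first by trying to reroute the cycle to avoid $z$ (possible when $z$ is interior with both $P$-neighbours adjacent to a common vertex). Failing that, we would choose $P$, among extremal paths, to minimize the number of chords incident with $z$, and show that a counterexample with $d_G(z)\le d-1$ forces $G$ to look like $K_{d,m}$ or the Petersen graph. These are then checked directly, in the spirit of the exceptional graphs in Theorem~\ref{thm:tian-zang}.
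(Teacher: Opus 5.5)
There is a genuine gap. Your chord count in both the crossing and non-crossing cases rests on the assertion that every vertex $v\neq z$ of $C$ has all its neighbours on $V(P)$, so that summing $d_G(v)-2$ over $C$ and halving counts chords. That is false for interior vertices of a longest path. Only vertices that can serve as endpoints of an extremal path, after rotations, have confined neighbourhoods. For example, in $K_{d,m}$ with $m$ large, the small-side vertices are interior to every longest path and are adjacent to vertices off it.

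Moreover, your crossing cycle $v_0\cdots v_b v_p\cdots v_a v_0$ omits $v_{b+1},\dots,v_{a-1}$. So even neighbours on $P$ need not lie on $C$, and the claimed $2(d-2)$ endpoint chords are not justified either. Consequently the trade-off ``$z$ costs $d-2$ chords, $n\ge 2d+1$ pays it back'' only makes sense when $C$ is a Hamilton cycle. Outside that case you have no valid count at all.

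Your fallback cannot close the gap either. $K_{d,m}$ and the Petersen graph have minimum degree at least $d$, so they contain no vertex with $d_G(z)\le d-1$. They are also exceptions to the Tian--Zang bound $d(d-2)+1$, not to $d(d-2)$.

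The missing idea is to split into the Hamiltonian and non-Hamiltonian cases, and in the latter to \emph{root} the extremal path at $z$ rather than forbidding $z$ as an endpoint.

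\textbf{Hamiltonian case.} The $n-1\ge 2d$ vertices other than $z$ each contribute at least $d-2$ chord ends to a Hamilton cycle, giving at least $d(d-2)$ chords (Lemma~\ref{lem:ash-counting}). This is the only place $n\ge 2d+1$ is used; here your intuition about the slack was right.

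\textbf{Non-Hamiltonian case.} Take a longest path $P=v_0\cdots v_\ell$ with $v_0=z$. Among these, choose one maximizing the terminal cycle $C$ at $v_\ell$. Let $H$ be the subgraph induced by $V(C)$, and let $W(H)$ be the set of ends of Hamilton paths of $H$ starting at $u_m=v_s$. Then:
\begin{enumerate}[nosep]
\item $z\notin W(H)$ automatically, since either $z\notin V(H)$ or $z=u_m$.
\item $u_1,u_{m-1}\in W(H)$.
\item If some $w\in W(H)$ had a neighbour outside $H$, this would yield either a longer path from $z$, or, via some $v_i$ with $i<s$, a longer terminal cycle. Hence $d_H(w)=d_G(w)\ge d$.
\end{enumerate}
Ash's terminal-cycle argument (Lemma~\ref{lem:ash-terminal}) then applies verbatim. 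Because all rotations happen at the far end, $z$ never enters the degree count. Your both-ends-admissible setup does not provide this guarantee.
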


With the above theorem as a main tool, we establish the conjecture of K\'ara–Kr\'al.

\begin{theorem}\label{thm:KaraKral-conj}
$f(31,31)=8$. 
\end{theorem}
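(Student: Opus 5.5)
Since K\'ara and Kr\'al already proved $8\le f(31,31)\le 9$, it remains to show that every graph $G$ on $31$ vertices with $\delta(G)\ge 8$ contains a cycle with at least $31$ chords. With $d=8$ we have $d(d-2)=48\ge 31$ and $2d+1=17$. The plan is to locate a $2$-connected subgraph with at least $17$ vertices in which all vertices but at most one have degree at least $8$, and then apply Theorem~\ref{thm:one-exception-ash}. Chords of a cycle in a subgraph remain chords in $G$, so this gives a cycle with at least $48\ge 31$ chords. If no such subgraph exists, all the relevant pieces are small and dense, and there we will find the chords directly.

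Step 1: we may assume $G$ is connected, since otherwise we pass to a component; each component has at least $9$ vertices. Consider the block structure of $G$ and take an end block $B$, i.e.\ a leaf of the block-cutvertex tree (if $G$ is itself $2$-connected, let $B=G$). Every vertex of $B$ other than its unique cutvertex $z$ has all its neighbours inside $B$, so it has degree at least $8$ in $B$. Thus $B$ satisfies the hypothesis of Theorem~\ref{thm:one-exception-ash} with this $z$; note $B$ is $2$-connected because $|B|\ge 9$. If $|B|\ge 17$, we are done. (When $G$ is $2$-connected with $|G|=31\ge 17$, Ash's theorem, Theorem~\ref{thm:Ash1985}, applies directly.)

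Step 2: suppose instead that every end block has at most $16$ vertices. An end block $B$ has at least $9$ vertices, because a non-cut vertex has $8$ neighbours inside $B$. We now argue directly that such a $B$ contains a cycle with at least $31$ chords. Theorem~\ref{thm:GuptaKahnRobertson} is not enough here: applied with $d=8$ to a graph with full minimum degree it gives only $(9)(6)/2=27$, and $B$ may in any case have a low-degree cutvertex. So we work with the structure of $B$ itself. Let $n_B=|B|\in[9,16]$. Since $B-z$ has minimum degree at least $7$ on at most $15$ vertices, Dirac's theorem makes $B-z$ Hamiltonian whenever $n_B-1\le 14$. A Hamilton cycle $C$ of $B-z$ has $|E(B-z)|-(n_B-1)$ chords, and $|E(B-z)|\ge 7(n_B-1)/2$. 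This gives at least $\tfrac52(n_B-1)\ge 20$ chords, which is not yet $31$.

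Sharpening Step 2 is the main obstacle. Every vertex other than $z$ has degree at least $8$ in $B$, and $z$ loses at most its degree into $B$. For $n_B\ge 12$ the count $|E(B-z)|\ge(8(n_B-1)-\deg_B z)/2$ together with Hamiltonicity of $B-z$ gives more than $31$ chords once $n_B\ge 13$ or so. The cases $9\le n_B\le 12$ are tight, since $B$ is nearly complete. There I would instead take a Hamilton cycle of all of $B$ passing through $z$, which exists by a Dirac/Ore-type argument for $2$-connected graphs with one low-degree vertex. That cycle has $|E(B)|-n_B$ chords, and $|E(B)|\ge(8(n_B-1)+2)/2$; for $n_B=9$ the block is essentially $K_9$ with $z$ of degree at least $2$, giving $28+2-9$ chords. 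If the count falls short in the tight range, I would pass to a longer cycle or to a different block. In particular, one must handle the case where all end blocks are tiny, which forces many blocks; then $31$ vertices split into at least two end blocks of size at most $16$, so an internal block must be examined. These small cases I expect to require careful case analysis, and this is where most of the effort would lie.
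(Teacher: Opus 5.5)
Citing K\'ara--Kr\'al for $f(31,31)\ge 8$ is fine. Step~1 is also correct and does the job of the paper's final step. The paper routes it through Proposition~\ref{prop:large-terminal}, but applying Theorem~\ref{thm:one-exception-ash} with $d=8$ and the cut vertex as the exceptional vertex, as you do, already gives $48$ chords once the end block has at least $17$ vertices.

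The gap is Step~2, which carries the whole difficulty of the theorem. Its opening claim, that an end block with at most $16$ vertices itself contains a cycle with $31$ chords, is false. An end block $K_9$ has at most $\binom{9}{2}-9=27$ chords on any cycle. An $11$-vertex end block whose ten private vertices have degree exactly $8$, and whose cut vertex has degree $2$ in it, has $41$ edges, so even a Hamilton cycle has only $30$ chords. Every cycle lies inside a single block, so ``passing to a longer cycle'' cannot help. Your Hamilton-cycle-through-$z$ count, $|E(B)|-|V(B)|\ge 3|V(B)|-3$, settles only $|V(B)|\ge 12$. What remains is exactly the configuration where every end block has $9$, $10$ or $11$ vertices. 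There the chords must come from an internal block, and you leave this as unspecified case analysis, so the proposal does not prove the theorem.

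The missing argument is global and genuinely uses $|V(G)|=31$. So do not pass to a component; join the components by bridges instead, which keeps $31$ vertices and $\delta\ge 8$ and creates no new cycles or chords. The paper proceeds in four steps.
\begin{itemize}
\item[(1)] Closure on the private vertices plus edge counting gives $6q+d_B(z)\le 62$ for an end block with $q\le 15$ private vertices. Hence $q\in\{8,9,10\}$, with $B\cong K_9$ if $q=8$ and $d_B(z)=2$ if $q=10$ (Lemma~\ref{lem:small-endblock}).
\item[(2)] Private sets are disjoint, so there are two or three end blocks. Three are excluded: the at most $7$ leftover vertices cannot support an internal block with private vertices of degree $8$, so they are all cut vertices, each lying in an end block. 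This forces at most $3$ leftover vertices, hence some $q_i=10$, and then some cut vertex has degree less than $8$.
\item[(3)] With two end blocks there is a unique $2$-connected internal block $B$ with private vertices. Let $x_1,x_2$ be its two cut vertices, let $\varepsilon_i\in\{0,1\}$ record whether a bridge separates the $i$th end block from $B$, and put $s=d_B(x_1)+d_B(x_2)$. The identity $|Q(B)|=29-q_1-q_2-\varepsilon_1-\varepsilon_2$, together with lower bounds on $d_B(x_1)$ and $d_B(x_2)$, gives $|Q(B)|\ge 11$, or $|Q(B)|=10$ with $s\ge 6$, or $|Q(B)|=9$ with $s\ge 12$.
\item[(4)] A two-exceptional-vertex lemma (Lemma~\ref{lem:two-exceptional}) finishes. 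Closure on $Q(B)$ either yields a Hamilton cycle of $B$ with at least $3|Q(B)|-2+s/2\ge 31$ chords, or forces $N(x_1)=N(x_2)=\{a,b\}$. In the latter case $s=4$, so $|Q(B)|\ge 11$, and a Hamilton cycle of $B-x_2$ has at least $3|Q(B)|-1\ge 32$ chords.
\end{itemize}
None of (1)--(4) appears in your proposal. Steps (3)--(4) in particular are the ideas the theorem needs beyond Theorem~\ref{thm:one-exception-ash}.
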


Finally, we mention an application of the theorems of Gupta-Kahn-Robertson \cite{GuptaKahnRobertson1980} and Tian–Zang \cite{TianZang1991}. The first part of Erd\H{o}s Problem~1091 is best possible with respect to
the number of chords in an odd cycle.
Indeed, the
pentagonal wheel is $K_4$-free and $4$-chromatic, but every odd cycle in
it has at most two chords. 
Voss \cite[Conjecture~8.5.5, pp. 169]{VossBook} proposed a related conjecture in which the cycle is not
required to be odd: every $K_4$-free graph $G$ with $\chi(G)\ge4$
contains a cycle with at least four chords.
We establish it.

\begin{proposition}\label{prop:four-chord-main}
Let $G$ be a  graph with $\chi(G)\ge 4$.  If $G$ contains no subgraph isomorphic to $K_4$, then $G$ contains a cycle with at least four chords.
\end{proposition}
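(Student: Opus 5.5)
We reduce to a $4$-critical subgraph and then split according to its size, applying the theorems of Gupta--Kahn--Robertson and Tian--Zang.

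\textbf{Reduction.} Since $\chi(G)\ge 4$, $G$ contains a $4$-critical subgraph $H$. We pass from $G$ to a $4$-critical subgraph $H$ using two standard facts: the subgraph is still $K_4$-free, and a cycle in $H$ with at least four chords in $H$ has at least four chords in $G$. Standard facts about $k$-critical graphs give $\delta(H)\ge 3$ and that $H$ is $2$-connected. Moreover $H\ne K_4$, so we aim to find in $H$ a cycle with at least four chords.

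\textbf{Main case: Tian--Zang.} Apply Theorem~\ref{thm:tian-zang} with $d=3$. This requires $|V(H)|\ge 7$, and then yields a cycle with at least $d(d-2)+1=4$ chords unless $H\cong K_{3,m}$ with $m>3$, or $H$ is the Petersen graph. Neither exception occurs:
\begin{itemize}
\item $K_{3,m}$ is bipartite, hence $2$-colorable, contradicting $\chi(H)=4$.
\item The Petersen graph is $3$-chromatic, again contradicting $\chi(H)=4$.
\end{itemize}
So every $4$-critical $K_4$-free graph on at least $7$ vertices has the desired cycle.

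\textbf{Small case and main obstacle.} It remains to rule out $4$-critical $K_4$-free graphs on at most $6$ vertices. This is where care is needed, and I would handle it by direct elimination rather than by Theorem~\ref{thm:GuptaKahnRobertson}, whose bound $(d+1)(d-2)/2=2$ is too weak when $d=3$.
\begin{itemize}
\item \emph{Order $4$.} The only $4$-critical graph is $K_4$, which is excluded.
\item \emph{Order $5$.} A $4$-chromatic graph on $5$ vertices that is $K_4$-free is impossible. In a $4$-coloring of $5$ vertices some color class has size $2$ and the other three classes are singletons $a,b,c$. If the two vertices $u,v$ of the size-$2$ class are nonadjacent, then some pair among $a,b,c$ must be nonadjacent, since otherwise $u\cup\{a,b,c\}$ would contain a $K_4$ (with $u$ adjacent to all of $a,b,c$ by criticality and minimality). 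Merging that nonadjacent pair gives a $3$-coloring, contradicting $\chi=4$. More simply: $\chi(H)=4$ with $|V(H)|=5$ forces the complement to have no two disjoint edges together with a triangle-free condition, and one checks that $K_4\subseteq H$.
\item \emph{Order $6$.} Here $\alpha(H)\le 2$ is forced: if $H$ had an independent set $I$ of size $3$, then $H-I$ has $3$ vertices and is $K_4$-free, so it is $3$-colorable using at most $3$ colors. With a bit more care it is in fact $2$-colorable unless it is a triangle, which leaves a case check. Alternatively, argue that a $K_4$-free graph on $6$ vertices with $\delta\ge 3$ and $\chi=4$ must be $K_2\vee C_4$-like or the wheel $W_5$. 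The wheel $W_5=K_1\vee C_5$ is $4$-critical and $K_4$-free, and in it the Hamiltonian cycle through the hub, for example $h,v_1,v_2,v_3,v_4,v_5,h$, has chords $hv_2,hv_3,hv_4,v_1v_5$, which is four chords.
\end{itemize}

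I expect the main obstacle to be a clean enumeration of $4$-critical $K_4$-free graphs on $6$ vertices. My plan is to show via complement and coloring arguments that $W_5$ is the only one, and then exhibit the four-chord cycle above.
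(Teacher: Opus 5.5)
Your overall architecture matches the paper's. You pass to a $4$-critical subgraph $H$, apply Theorem~\ref{thm:tian-zang} with $d=3$ when $|V(H)|\ge 7$ (discarding $K_{3,m}$ and the Petersen graph as not $4$-chromatic), and treat orders at most $6$ by hand. Dropping Theorem~\ref{thm:GuptaKahnRobertson} is legitimate: Tian--Zang with $d=3$ already covers every $H$ with $\delta(H)\ge 3$ on at least $7$ vertices, so the paper's separate $\delta(H)\ge 4$ branch is not needed. Your order-$5$ argument also works. However, the reason $u$ is adjacent to $a,b,c$ is simply $\delta(H)\ge 3$ on five vertices: the only possible non-neighbour of $u$ is $v$. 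It is not ``minimality''.

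The gap is the order-$6$ case, which you leave as a plan. Your $\alpha(H)\le 2$ step stops at ``a case check''. It does close, because each vertex of $I$ has all of its at least three neighbours in the triangle $H-I$, producing a $K_4$. Even then, $\alpha(H)\le 2$ alone does not identify $H$. Your alternative candidate $K_2\vee C_4$ is not even $K_4$-free: the two hub vertices plus any rim edge form a $K_4$. To finish along your lines you need the paper's complement argument (Lemma~\ref{lem:six-vertex-critical}):
\begin{itemize}
\item $\Delta(\overline H)\le 2$, from $\delta(H)\ge 3$;
\item $\overline H$ is triangle-free, which is your $\alpha(H)\le 2$;
\item $\alpha(\overline H)\le 3$, since $H$ has no $K_4$;
\item $\nu(\overline H)\le 2$, since three disjoint non-edges would $3$-colour $H$.
\end{itemize}
Paths and even cycles satisfy $\alpha+\nu=|V|$, so some component of $\overline H$ is an odd cycle of length at least $5$. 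This forces $\overline H\cong C_5\cup K_1$ and $H\cong W_5$.

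A shorter route avoids the classification altogether. $H$ is connected, not complete and not an odd cycle, so Brooks' theorem gives $\Delta(H)\ge\chi(H)=4$. Hence $2|E(H)|\ge 4+5\cdot 3$, so $|E(H)|\ge 10$. Dirac's theorem ($\delta(H)\ge 3=|V(H)|/2$) gives a Hamilton cycle, and each of the at least $10-6=4$ remaining edges is a chord of it.
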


The organization of this paper is as follows. In Section \ref{sec:ghm}, we disprove the case $k=3$ of the Gao-Huo-Ma problem. In Section \ref{sec:voss}, we disprove Conjecture~\ref{prob:voss} for every $k\ge 5$. In Section 4, we prove the conjecture of K\'ara--Kr\'al, by proving $f(31,31)=8$. We conclude this paper by considering two related problems in the last section.

\section{The case \texorpdfstring{$k=3$}{k=3} of the Gao--Huo--Ma problem}\label{sec:ghm}
We now disprove Problem~\ref{prob:ghm} for $k=3$. Recall that $\calL(G)$ denotes the set of cycle lengths in $G$.  Let $\rho(G)$ be the largest integer $m$ for which some interval of $m$ consecutive integers is contained in $\calL(G)$.

We use the following classical property of Haj\'os's construction.  Given vertex-disjoint graphs $X$ and $Y$ and edges $x_1x_2\in E(X)$ and $y_1y_2\in E(Y)$, the Haj\'os join along these edges is obtained by deleting $x_1x_2$ and $y_1y_2$, identifying $x_1$ with $y_1$, and adding the edge $x_2y_2$.
The following is the standard preservation theorem for Haj\'os's construction; see Haj\'os \cite{Hajos1961}, Jensen and Toft \cite[Chapter 11]{JensenToft1995}, or Jensen and Royle \cite{JensenRoyle1999}.

\begin{fact}[Haj\'os join and criticality]\label{fact:hajos}
For $r\ge 3$, the Haj\'os join of two $r$-critical graphs is $r$-critical.  
\end{fact}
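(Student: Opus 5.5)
The statement to prove is Fact~\ref{fact:hajos}: for $r\ge3$, the Haj\'os join of two $r$-critical graphs is $r$-critical. Let $X$ and $Y$ be $r$-critical, let $G$ be their join along $x_1x_2$ and $y_1y_2$, and write $w$ for the identified vertex $x_1=y_1$. I will prove three things in order: $\chi(G)\ge r$, $\chi(G)\le r$, and $\chi(G-e)\le r-1$ for every edge $e$. For critical graphs without isolated vertices, the last property also covers proper subgraphs obtained by deleting vertices, since deleting a vertex removes at least one edge.

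First, $\chi(G)\ge r$. Suppose $c$ is a proper $(r-1)$-colouring of $G$. Restrict $c$ to $X-x_1x_2$ and to $Y-y_1y_2$, with both $x_1$ and $y_1$ receiving the colour $c(w)$. Since $X$ is not $(r-1)$-colourable, this restriction cannot be proper on $X$, so $c(x_2)=c(w)$. Likewise $c(y_2)=c(w)$. Then $c(x_2)=c(y_2)$, which contradicts the edge $x_2y_2$.

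Second, $\chi(G)\le r$. Since $X-x_1x_2$ is $(r-1)$-colourable, any $(r-1)$-colouring of it gives $x_1,x_2$ the same colour. Otherwise it would properly colour $X$, which is impossible. The same holds for $Y-y_1y_2$. Permute colours so that both colourings give $w$ colour $1$. Then $x_2$ and $y_2$ also both have colour $1$. Recolour $y_2$ with a new colour $r$. This is proper, so $\chi(G)\le r$.

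Third, criticality. Let $e$ be any edge of $G$; there are three cases.
\begin{itemize}
\item If $e=x_2y_2$, glue $(r-1)$-colourings of $X-x_1x_2$ and $Y-y_1y_2$ that agree at $w$. This gives an $(r-1)$-colouring of $G-e$.
\item If $e$ lies in the $X$-part, then $X-x_1x_2-e$ has an $(r-1)$-colouring in which $x_1$ and $x_2$ get different colours. To see this, note that $X-e$ is $(r-1)$-colourable, and any such colouring must colour $x_1x_2$ properly. Take a colouring of $Y-y_1y_2$, in which $y_2$ has the colour of $y_1$. Permute its colours so that $w$ agrees with the $X$-colouring. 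Then $c(y_2)=c(w)\ne c(x_2)$, so the edge $x_2y_2$ is fine.
\item If $e$ lies in the $Y$-part, the argument is symmetric.
\end{itemize}
This covers all edges, and since $G$ has no isolated vertices, $G$ is $r$-critical.

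The main subtlety is the middle observation that every $(r-1)$-colouring of $X-x_1x_2$ forces $x_1$ and $x_2$ to share a colour, because $X$ itself is not $(r-1)$-colourable. Everything else is gluing colourings along the single cut vertex $w$ after permuting colours.
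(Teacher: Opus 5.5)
Your proof is correct and is the standard argument for Haj\'os's theorem in the sources the paper cites; the paper itself states this as a Fact and gives no proof. The one assertion you leave unjustified, that $G$ has no isolated vertices, is exactly where $r\ge 3$ enters. Every vertex of an $r$-critical graph has degree at least $r-1\ge 2$, since otherwise an $(r-1)$-colouring of $X-v$ extends to $v$. Hence $d_G(w)=d_X(x_1)+d_Y(y_1)-2\ge 2$, and every other vertex keeps its old degree. Without this hypothesis the statement fails: for $r=2$, joining two copies of $K_2$ leaves $w$ isolated, so the result is not $2$-critical.
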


We define a sequence of rooted graphs. For each $t\ge 1$, the graph $R_t$ has ordered terminals $(a_t,b_t)$, and $G_t$ is obtained from $R_t$ by adding the terminal edge $a_tb_t$. Start with $G_1=K_4$, choose an arbitrary edge $a_1b_1$ in $G_1$, and set $R_1=G_1-a_1b_1$.
Suppose that $R_t$, $G_t$, and the terminals $(a_t,b_t)$ have been defined. We construct $G_{t+1}$ from $G_t$ by a Haj\'os join with a new copy of $K_4$. Let the vertex set of this new $K_4$ be $\{x,a_{t+1},b_{t+1},z_{t+1}\}$. Choose the edge $a_tb_t$ in $G_t$ and the edge $x z_{t+1}$ in the new $K_4$, delete them, identify $x$ with $a_t$, and add the edge $b_tz_{t+1}$. The resulting graph is $G_{t+1}$, and its new terminal edge is $a_{t+1}b_{t+1}$ from the added $K_4$. Finally, set
$R_{t+1}=G_{t+1}-a_{t+1}b_{t+1}.$

\begin{proposition}\label{prop:hajos-critical-size}
For every $t\ge 1$, the graph $G_t$ is $4$-critical and has $3t+1$ vertices.
\end{proposition}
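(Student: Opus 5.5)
The plan is induction on $t$, using Fact~\ref{fact:hajos} with $r=4$ for criticality and a direct vertex count for the order.

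For the base case $t=1$, we have $G_1=K_4$. This graph is $4$-critical: $\chi(K_4)=4$, and deleting any edge or vertex leaves a $3$-colorable graph. It also has $4=3\cdot 1+1$ vertices.

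For the inductive step, assume $G_t$ is $4$-critical on $3t+1$ vertices. The graph $G_{t+1}$ is, by definition, the Haj\'os join of $G_t$ and a new copy of $K_4$ with vertex set $\{x,a_{t+1},b_{t+1},z_{t+1}\}$. The join is taken along the edges $a_tb_t\in E(G_t)$ and $xz_{t+1}\in E(K_4)$, identifying $x$ with $a_t$ and adding the edge $b_tz_{t+1}$. The two ingredients must be valid for the construction:
\begin{itemize}
\item $a_tb_t$ is an edge of $G_t$, since $G_t=R_t+a_tb_t$ by definition.
\item The copy of $K_4$ is vertex-disjoint from $G_t$.
\end{itemize}
Since both $G_t$ and $K_4$ are $4$-critical, Fact~\ref{fact:hajos} shows that $G_{t+1}$ is $4$-critical.

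For the order, the join identifies exactly one pair of vertices, so
\[
|V(G_{t+1})| = |V(G_t)| + 4 - 1 = 3t+1+3 = 3(t+1)+1 .
\]

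The only point needing care is that the terminal edge $a_{t+1}b_{t+1}$ for the next step is actually present in $G_{t+1}$. It is: the join deletes only the $K_4$-edge $xz_{t+1}$, and $a_{t+1}b_{t+1}$ is a different edge of the new $K_4$. Hence $a_{t+1}b_{t+1}\in E(G_{t+1})$, and $G_{t+1}$ has the same form as $G_t$, so the induction continues.

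I expect no real obstacle, since Fact~\ref{fact:hajos} is assumed.
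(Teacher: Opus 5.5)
Your proof is correct and follows the paper's own argument: induction on $t$ with base case $G_1=K_4$, Fact~\ref{fact:hajos} (with $r=4$) for criticality, and the count $|V(G_{t+1})|=|V(G_t)|+3$ because the join identifies one pair of vertices. Your checks that $a_tb_t\in E(G_t)$ and that $a_{t+1}b_{t+1}$ survives the join are small details the paper leaves implicit in the construction.
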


\begin{proof}
We proceed by induction on $t$. For $t=1$, $G_1=K_4$ is $4$-critical and has $4$ vertices.
Assume that $G_t$ is $4$-critical. By the construction, $G_{t+1}$ is obtained from $G_t$ and a new copy of $K_4$ via a Haj\'os join. Since $K_4$ is $4$-critical, by Fact~\ref{fact:hajos}, $G_{t+1}$ is also $4$-critical.
Since $G_1$ has $4$ vertices and each step adds exactly three new vertices $a_{t+1},b_{t+1},z_{t+1}$,  $|V(G_t)| = 4 + 3(t-1) = 3t+1$.
\end{proof}

Let $H_{t+1}$ be the subgraph of $R_{t+1}$ with vertex set $\{a_t,b_t,a_{t+1},b_{t+1},z_{t+1}\}$ and edge set consisting of the five new edges
$a_{t+1}a_t,\; b_{t+1}a_t,\; a_{t+1}z_{t+1},\; b_{t+1}z_{t+1},\; z_{t+1}b_t.$
Then $H_{t+1}$ meets $R_t$ exactly in the two vertices $a_t$ and $b_t$.

For $t\ge 1$, define $\calP_t$ to be the set of lengths of $a_tb_t$-paths in $R_t$, and define $\calD_t$ to be the set of cycle lengths in $R_t$.  If $S\subseteq \mathbb Z$ and $c$ is an integer, write $S+c=\{s+c:s\in S\}$.

\begin{lemma}\label{lem:recurrences}
For every $t\ge 1$, one has $\calP_{t+1}=\{2\}\cup(\calP_t+3)$ and $\calD_{t+1}=\calD_t\cup\{4\}\cup(\calP_t+3)$.  Moreover, $\calL(G_t)=\calD_t\cup(\calP_t+1)$.
\end{lemma}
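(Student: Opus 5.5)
The plan is to exploit the fact that $R_{t+1}$ is a $2$-sum-like gluing: $R_{t+1}=R_t\cup H_{t+1}$, and the two pieces meet exactly in $\{a_t,b_t\}$. This holds because $G_{t+1}$ is obtained from $G_t$ by deleting $a_tb_t$, so $R_t=G_t-a_tb_t$ survives intact. The new $K_4$ loses $xz_{t+1}$, with $x=a_t$, and loses $a_{t+1}b_{t+1}$ when we pass to $R_{t+1}$. The edge $b_tz_{t+1}$ is added. What remains of the new part is exactly the five edges of $H_{t+1}$. Since $\{a_t,b_t\}$ separates the two pieces, every path or cycle of $R_{t+1}$ either lies in one piece or crosses between them exactly twice, through $a_t$ and through $b_t$.

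First I will tabulate the relevant paths inside $H_{t+1}$ by direct inspection of this five-edge graph. Its edges form the path $b_t\,z_{t+1}\,a_{t+1}\,a_t$ together with $z_{t+1}b_{t+1}$ and $b_{t+1}a_t$.
\begin{itemize}
\item Cycles: the only cycle in $H_{t+1}$ is $a_t a_{t+1} z_{t+1} b_{t+1} a_t$, of length $4$.
\item $a_tb_t$-paths: there are exactly two, $a_t a_{t+1} z_{t+1} b_t$ and $a_t b_{t+1} z_{t+1} b_t$, both of length $3$.
\item $a_{t+1}b_{t+1}$-paths: the direct routes $a_{t+1}a_tb_{t+1}$ and $a_{t+1}z_{t+1}b_{t+1}$ have length $2$. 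Any other route must leave $H_{t+1}$ through $a_t$ and $b_t$.
\end{itemize}

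For $\calD_{t+1}$, a cycle of $R_{t+1}$ is either inside $R_t$ (giving $\calD_t$), or inside $H_{t+1}$ (giving $4$), or it uses an $a_tb_t$-path in each piece. In the last case its length is $p+3$ with $p\in\calP_t$, since both $H$-paths have length $3$. Conversely each such combination is a genuine cycle, because the two paths share only their ends.

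For $\calP_{t+1}$, an $a_{t+1}b_{t+1}$-path either stays in $H_{t+1}$, giving length $2$, or enters $R_t$. If it enters $R_t$, it must do so through both $a_t$ and $b_t$ and traverse an $a_tb_t$-path of $R_t$ of length $p$. The remaining $H$-portion consists of two disjoint paths, one joining $\{a_{t+1},b_{t+1}\}$ to $a_t$ and the other to $b_t$. The options are $a_{t+1}a_t$ with $b_{t+1}z_{t+1}b_t$, or $b_{t+1}a_t$ with $a_{t+1}z_{t+1}b_t$. Each option contributes $1+2=3$, and any route through $a_t$ of greater length would block the $b_t$ side. This gives $\calP_{t+1}=\{2\}\cup(\calP_t+3)$.

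Finally, $\calL(G_t)=\calD_t\cup(\calP_t+1)$ holds since a cycle of $G_t=R_t+a_tb_t$ either avoids the edge $a_tb_t$ or consists of that edge plus an $a_tb_t$-path in $R_t$.

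The main care point is the exhaustiveness in the $\calP_{t+1}$ case. I must verify that the two $H$-segments are vertex-disjoint and cannot share $z_{t+1}$, which forces exactly the two listed options. This is a finite check in a five-vertex graph.
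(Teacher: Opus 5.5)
Your proof is correct and is essentially the paper's argument: decompose $R_{t+1}$ as $R_t\cup H_{t+1}$ glued along $\{a_t,b_t\}$, read off the two length-$2$ $a_{t+1}b_{t+1}$-paths, the two length-$3$ $a_tb_t$-paths and the unique $4$-cycle in $H_{t+1}$, and obtain $\calL(G_t)=\calD_t\cup(\calP_t+1)$ by splitting on whether a cycle uses $a_tb_t$. One wording point: your blanket claim that every path of $R_{t+1}$ either stays in one piece or crosses exactly twice is false for arbitrary paths, but it holds for the cycles and $a_{t+1}b_{t+1}$-paths you actually apply it to, so the argument is unaffected.
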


\begin{proof}
The edges of $R_{t+1}$ not already in $R_t$ are
$a_{t+1}a_t$, $b_{t+1}a_t$, $a_{t+1}z_{t+1}$, $b_{t+1}z_{t+1}$, $z_{t+1}b_t$.
Thus, $R_t$ attaches to the new vertices only at $a_t$ and $b_t$; at $a_t$ the external edges are $a_t a_{t+1}$ and $a_t b_{t+1}$, while at $b_t$ the only external edge is $b_t z_{t+1}$.

Let $P$ be an $a_{t+1}b_{t+1}$-path in $R_{t+1}$.
If $P\subseteq H_{t+1}$, then $P$ is either $a_{t+1}a_t b_{t+1}$ or $a_{t+1}z_{t+1}b_{t+1}$, both of length $2$.
If $P\cap R_t\neq \emptyset$, then, because $P$ is simple and the only attachment vertices of $R_t$ are $a_t$ and $b_t$, the intersection $P\cap R_t$ is a single $a_tb_t$-path.
Hence $P$ has the form $a_{t+1}a_t R b_t z_{t+1}b_{t+1}$ or $a_{t+1}z_{t+1}b_t R a_t b_{t+1}$, where $R$ is an $a_tb_t$-path in $R_t$, and so $|P| = |R| + 3$.
Consequently, $\mathcal{P}_{t+1} = \{2\} \cup (\mathcal{P}_t+3)$.

Now let $C$ be a cycle in $R_{t+1}$.
If $C\subseteq R_t$, then $|C|\in\mathcal{D}_t$.
If $C\subseteq H_{t+1}$, since in $H_{t+1}$, the vertex $b_t$ has degree one, the only such cycle is $a_t a_{t+1}z_{t+1}b_{t+1}a_t$, of length $4$.
It remains to consider the case where $C$ uses edges from both $R_t$ and $H_{t+1}$.  Since $R_t$ and $H_{t+1}$ share exactly the two vertices $a_t$ and $b_t$, the cycle $C$ must consist of an $a_tb_t$-path in $R_t$ and an $a_tb_t$-path in $H_{t+1}$.  The only $a_tb_t$-paths in $H_{t+1}$ are
$a_t a_{t+1} z_{t+1} b_t$
and
$a_t b_{t+1} z_{t+1} b_t,$
both of length $3$.  Hence, $C$ has length $|R|+3$ for some $a_tb_t$-path $R$ in $R_t$, so $|C|\in\mathcal P_t+3$. 
Conversely, $\calD_{t+1}=\calD_t\cup\{4\}\cup(\calP_t+3)$.

Finally, $G_t = R_t + a_tb_t$.
A cycle in $G_t$ either avoids the edge $a_tb_t$ and thus lies in $R_t$, or uses it and becomes an $a_tb_t$-path in $R_t$ upon removal.
Hence, the set of cycle lengths of $G_t$ is $\mathcal{D}_t \cup (\mathcal{P}_t+1)$.
\end{proof}

\begin{lemma}\label{lem:explicit-pd}
For every $t\ge 1$, $\calP_t=\{2+3i:0\le i\le t-1\}\cup\{3t\}$ and $\calD_t=\{3,4\}\cup\{5+3i,6+3i:0\le i\le t-2\}$, where the second set in the formula for $\calD_t$ is empty for $t=1$.
\end{lemma}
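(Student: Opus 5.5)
We argue by induction on $t$, using the recurrences of Lemma~\ref{lem:recurrences} for the inductive step, so the only real work is the base case $t=1$ and some bookkeeping of index shifts.

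\emph{Base case.} Here $R_1=K_4-a_1b_1$; call the other two vertices $u,v$. The $a_1b_1$-paths in $R_1$ are $a_1ub_1$ and $a_1vb_1$, of length $2$, and $a_1uvb_1$ and $a_1vub_1$, of length $3$. Hence $\calP_1=\{2,3\}$, which agrees with $\{2+3i:0\le i\le 0\}\cup\{3\cdot 1\}$. The cycles of $R_1$ are the two triangles through the edge $uv$, namely $a_1uva_1$ and $b_1uvb_1$, and the $4$-cycle $a_1ub_1va_1$. So $\calD_1=\{3,4\}$, which matches the formula since the second set is empty when $t=1$.

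\emph{Path lengths.} Assume the formula for $\calP_t$. By Lemma~\ref{lem:recurrences},
\[
\calP_{t+1}=\{2\}\cup\{5+3i:0\le i\le t-1\}\cup\{3t+3\}.
\]
Reindexing with $j=i+1$ turns the middle set into $\{2+3j:1\le j\le t\}$. Adding back $2$ as the $j=0$ term gives $\{2+3j:0\le j\le t\}\cup\{3(t+1)\}$, as required.

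\emph{Cycle lengths.} By Lemma~\ref{lem:recurrences}, $\calD_{t+1}=\calD_t\cup\{4\}\cup(\calP_t+3)$. The element $4$ is already in $\calD_t$, and
\[
\calP_t+3=\{5+3i:0\le i\le t-1\}\cup\{3t+3\}.
\]
The induction hypothesis gives $\calD_t=\{3,4\}\cup\{5+3i,6+3i:0\le i\le t-2\}$. Taking the union, the set $\{5+3i:0\le i\le t-1\}$ supplies the one new value $5+3(t-1)$. The value $3t+3$ equals $6+3(t-1)$, so it supplies the matching new value. Together these extend the range of $i$ to $0\le i\le t-1$, which is the claimed formula for $\calD_{t+1}$.

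The only point needing care is the transition $t=1\to2$, where the second set in $\calD_1$ is empty. In that case the union is $\{3,4\}\cup\{5\}\cup\{6\}$, which again matches the formula. There is no genuine obstacle here; the main risk is an off-by-one error in the indices, and the reindexing above is arranged to avoid it.
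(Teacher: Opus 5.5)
Your proof is correct and follows the paper's own argument: induction on $t$, a direct check that $\calP_1=\{2,3\}$ and $\calD_1=\{3,4\}$ in $R_1=K_4-a_1b_1$, and an inductive step that applies the recurrences of Lemma~\ref{lem:recurrences} with the index shift you describe. You list the paths and cycles of $R_1$ explicitly and check the $t=1\to2$ step separately, which is a little more detailed than the paper but the same proof.
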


\begin{proof}
We proceed by induction on $t$. For $t=1$, the graph $R_1=K_4-a_1b_1$ has $a_1b_1$-paths of lengths $2$ and $3$, and has cycles of lengths $3$ and $4$.  Thus, the stated formulas hold.

Assume that the results hold for $t$.  By Lemma \ref{lem:recurrences}, $\calP_{t+1}=\{2\}\cup\{5+3i:0\le i\le t-1\}\cup\{3t+3\}=\{2+3i:0\le i\le t\}\cup\{3(t+1)\}$.  Similarly,
\begin{align*}
\calD_{t+1}&=\{3,4\}\cup\{5+3i,6+3i:0\le i\le t-2\}\cup\{5+3i:0\le i\le t-1\}\cup\{3t+3\}\\
&=\{3,4\}\cup\{5+3i,6+3i:0\le i\le t-1\}.
\end{align*}
  This completes the induction.
\end{proof}

\begin{theorem}\label{thm:ghm-counterexample}
For every $t\ge 1$, the graph $G_t$ is $4$-critical, has $3t+1$ vertices, and 
\begin{equation}\label{eq:intro-cycle-set}
\calL(G_t)
 =
\{3,4\}\cup\{5+3i,6+3i:0\le i\le t-2\}\cup\{3t+1\}.
\end{equation}  Moreover $\rho(G_1)=2$, $\rho(G_2)=5$, and $\rho(G_t)=4$ for all $t\ge 3$.
\end{theorem}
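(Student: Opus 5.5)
The first two assertions are already available: Proposition~\ref{prop:hajos-critical-size} shows that $G_t$ is $4$-critical with $3t+1$ vertices. For the cycle-length set, I will combine Lemma~\ref{lem:recurrences}, which gives $\calL(G_t)=\calD_t\cup(\calP_t+1)$, with the explicit formulas of Lemma~\ref{lem:explicit-pd}. Shifting $\calP_t$ by one gives
\[
\calP_t+1=\{3+3i:0\le i\le t-1\}\cup\{3t+1\}.
\]
I then check that $\{3+3i:0\le i\le t-1\}$ is contained in $\calD_t$. The value $3$ lies in $\{3,4\}$. For $1\le i\le t-1$ we have $3+3i=6+3(i-1)$ with $0\le i-1\le t-2$, so these values lie in the second part of $\calD_t$. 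Hence the only new element is $3t+1$, which yields \eqref{eq:intro-cycle-set}. For $t=1$ this gives $\{3,4\}$, and for $t=2$ it gives $\{3,4,5,6,7\}$; both agree with direct inspection, since $G_2$ has $7$ vertices.

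Next I compute $\rho$. For $t=1$, $\calL=\{3,4\}$, so $\rho(G_1)=2$. For $t=2$, $\calL=\{3,\dots,7\}$, so $\rho(G_2)=5$.

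For $t\ge3$ the set $\calD_t$ consists of $3,4$ followed by the pairs $\{5+3i,6+3i\}$. Thus every residue class $\equiv1 \pmod 3$ with value at least $7$ is missing from $\calD_t$, up to $3t+1$. This gives the run $3,4,5,6$ of length $4$, followed by gaps at $7,10,\dots,3t-2$. Every later run is a pair $\{3i+5,3i+6\}$ of length $2$. The final element $3t+1$ attaches to the last pair $\{3t-1,3t\}$, which exists because $i=t-2\ge 1$ is allowed. That produces the run $3t-1,3t,3t+1$ of length $3$, and $3t+2\notin\calL$.

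The only point needing care is that the initial run really stops at $6$. Indeed $7\notin\calL(G_t)$ for $t\ge3$, because $7\equiv1 \pmod 3$ and $7\ne 3t+1$ when $t\ge3$. Therefore the maximum run has length $4$, so $\rho(G_t)=4$. The main obstacle is just this bookkeeping of the gaps at residues $1 \pmod 3$ and the boundary cases $t\le 2$. I do not expect any genuine difficulty.
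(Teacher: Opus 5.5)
Your proposal is correct and follows the paper's proof essentially step for step. You take criticality and order from Proposition~\ref{prop:hajos-critical-size}, write $\calL(G_t)=\calD_t\cup(\calP_t+1)$ via Lemmas~\ref{lem:recurrences} and~\ref{lem:explicit-pd}, absorb $\{3+3i:0\le i\le t-1\}$ into $\calD_t$ so that only $3t+1$ is new, and read off $\rho$ from the missing lengths $7,10,\dots,3t-2$ (residue $1 \bmod 3$), which is the paper's argument, stated a little more explicitly.
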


\begin{proof}
Criticality and the vertex count follow from Proposition \ref{prop:hajos-critical-size}.  By Lemmas \ref{lem:recurrences} and \ref{lem:explicit-pd},
\begin{align*}
\calL(G_t)&=\calD_t\cup(\calP_t+1)\\
&=\{3,4\}\cup\{5+3i,6+3i:0\le i\le t-2\}\cup\{3+3i:0\le i\le t-1\}\cup\{3t+1\}.
\end{align*}
The set $\{3+3i:0\le i\le t-1\}$ contributes $3$, already present, and contributes $6,9,\ldots,3t$, which are contained in $\{6+3i:0\le i\le t-2\}$ when $t\ge 2$.  The case $t=1$ is immediate.  This proves \eqref{eq:intro-cycle-set}.

It remains to determine $\rho(G_t)$.  For $t=1,2$, we have
$
\mathcal L(G_1)=\{3,4\}$ and $ \mathcal L(G_2)=\{3,4,5,6,7\}.
$
Now assume $t\ge 3$.  By the formula for $\mathcal L(G_t)$, the lengths
$7+3i~(0\le i\le t-3)$
are missing.  Hence, no interval of consecutive lengths can contain one of these gaps.  The longest initial interval of consecutive lengths is $\{3,4,5,6\}$; any later interval before the end has length at most $2$, and the final block has length at most $3$, which is $\{3t-1,3t,3t+1\}$.  Consequently,
$\rho(G_t)=4$
for all $t\ge 3$.
\end{proof}

\begin{proof}[\bf Proof of Theorem~\ref{thm:ghm-false}]
If such a function $f_3(n)$ existed, then Theorem~\ref{thm:ghm-counterexample} gives $f_3(3t+1)\le4$ for all $t\ge3$, contradicting $f_3(n)\to\infty$ along the subsequence $n=3t+1$.
\end{proof}

\section{Voss's conjecture and bounded-chord $k$-critical graphs}\label{sec:voss}

In this section, we disprove Conjecture~\ref{prob:voss} for every $k\ge 5$.  
We now describe the counterexample $H_{k,m}$. 
The graph consists of a clique $Q$, a chain of spine pentagons (the definition will be given later), and attached pentagons used to impose coloring restrictions.

\begin{cons}
Fix integers $k\ge 4$, $m\geq 1$ and set $p=k-3$.  Let
$Q=\{q_1,\ldots,q_p\}$
be a clique.  For each $i\in\{0,\ldots,m\}$, take a pentagon
$S_i=a_i b_i c_i d_i t_i a_i .$
We call the pentagons $S_i$ the spine, and the edges $c_i a_{i+1}$, $0\le i<m$, the spine edges.  The vertices of the spine pentagons are called \emph{spine vertices}.

For a vertex $x$ of some $S_i$ and $j\in\{1,\ldots,p\}$, let $L_j(x)$ be the pentagon $ruvwzr$ together with the edge $xr$. Add all edges from $u,v,w,z$ to $Q$, and all edges from $r$ to $Q\setminus\{q_j\}$. These are all edges involving this attached pentagon, apart from its pentagon edges and $xr$.

Leaves are attached according to the following pattern.  For every $0\le i\le m$ and every $j\in\{1,\ldots,p\}$, attach
$L_j(b_i),L_j(d_i),L_j(t_i).$
Also attach $L_j(a_0)$ and $L_j(c_m)$ for all $j$.  Finally, attach $L_j(a_i)$ for $1\le i\le m$ and $2\le j\le p$, and attach $L_j(c_i)$ for $0\le i<m$ and $2\le j\le p$.

The resulting graph is denoted by $H_{k,m}$.
The number of leaves  is $m(5p-2)+5p$, and therefore
\begin{equation}\label{eq:H-order}
\abs{V(H_{k,m})}=p+5(m+1)+5\bigl(m(5p-2)+5p\bigr)=(25k-80)m+26k-73.
\end{equation}
\end{cons}
\begin{lemma}\label{lem:leaf-forcing}
Let $G$ be a graph containing $Q$ and a leaf $L_j(x)$, including all edges from $L_j(x)$ to $Q$. Let $r$ be the vertex of $L_j(x)$ adjacent to $x$. In any proper $(k-1)$-coloring of $G$, the vertex $r$ receives the same color as $q_j$. Consequently, $x$ cannot receive the color of $q_j$.
\end{lemma}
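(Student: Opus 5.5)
Since $Q$ is a clique on $p=k-3$ vertices, any proper $(k-1)$-coloring gives its vertices $p$ pairwise distinct colors. Exactly $(k-1)-p=2$ colors remain unused on $Q$; call them $\alpha$ and $\beta$. The plan is to track which colors are available on the attached pentagon $ruvwzr$ of $L_j(x)$, and show that $r$ can only take the color of $q_j$.

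The vertices $u,v,w,z$ are adjacent to every vertex of $Q$, so each of them is colored from $\{\alpha,\beta\}$. The path $uvwz$ lies in the pentagon, so consecutive vertices on it get different colors. With only two colors available, the colors along the path must alternate. In particular, $u$ and $z$ are at even distance along the path, whereas $u$ and $w$ differ by two steps and $u$, $z$ by three steps. So $u$ and $z$ get different colors: say $u=\alpha$, $v=\beta$, $w=\alpha$, $z=\beta$. Hence $\{c(u),c(z)\}=\{\alpha,\beta\}$.

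Now consider $r$. It is adjacent to $u$ and $z$, which together use both $\alpha$ and $\beta$, so $c(r)\notin\{\alpha,\beta\}$. Therefore $c(r)$ is one of the colors of $Q$. But $r$ is also adjacent to every $q_i$ with $i\neq j$, so $c(r)$ avoids the colors of all $q_i$ with $i\ne j$. The only color left is $c(q_j)$, so $c(r)=c(q_j)$.

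Finally, $x$ is adjacent to $r$, so $c(x)\neq c(r)=c(q_j)$, which is the second claim.

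The only step needing care is the parity argument for the odd pentagon. This is where oddness enters: the path $u v w z$ has three edges, so its endpoints receive different colors in any 2-coloring of the path. This forces $r$ to see both free colors. No earlier result is needed; the argument is a direct counting of available colors.
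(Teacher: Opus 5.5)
Your proof is correct and follows essentially the same argument as the paper: two colors $\alpha,\beta$ are missing from $Q$, and the path $uvwz$ must alternate between them, so $u$ and $z$ together use both; the edges from $r$ to $Q\setminus\{q_j\}$ then leave only the color of $q_j$. One small slip: your second paragraph says $u$ and $z$ are at \emph{even} distance along the path, but the distance is $3$, which is \emph{odd}, as you correctly say in your last paragraph.
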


\begin{proof}
Since $\lvert Q\rvert=p=k-3$ and there are $k-1=p+2$ colors, exactly two colors are absent from $Q$; call them $\alpha$ and $\beta$. Since $u,v,w,z$ are adjacent to all vertices of $Q$, they are restricted to $\alpha$ and $\beta$.  Since the colors alternate along the path $u v w z$, $u$ and $z$ receive different colors.  
Because $r$ is adjacent to both $u$ and $z$, it cannot use $\alpha$ or $\beta$.  
Moreover, $r$ is adjacent to every vertex of $Q\setminus\{q_j\}$; therefore $r$ receives the color of $q_j$.  Since $xr$ is an edge, $x$ cannot receive the color of $q_j$.
\end{proof} 

By Lemma~\ref{lem:leaf-forcing}, a leaf $L_j(x)$ forces $x$ to avoid the color of $q_j$.
This gives the following corollary.

\begin{corollary}\label{cor:leaf-restrictions}
Suppose that $H_{k,m}$ has a proper $(k-1)$-coloring.  Since $Q=\{q_1,\ldots,q_p\}$ is a clique, its vertices receive $p$ distinct colors.  Let $\alpha$ and $\beta$ be the two colors not used on $Q$.  Then the following hold.
\begin{enumerate}[label=\textup{(\roman*)},nosep]
\item For every $0\le i\le m$, the vertices $b_i,d_i,t_i$ use only $\alpha$ and $\beta$.  The same holds for $a_0$ and $c_m$.
\item For every $1\le i\le m$, the vertex $a_i$ avoids the colors of $q_2,\ldots,q_p$; thus $a_i$ may receive only $\alpha$, $\beta$, or the color of $q_1$.
\item For every $0\le i\le m-1$, the vertex $c_i$ avoids the colors of $q_2,\ldots,q_p$; thus $c_i$ may receive only $\alpha$, $\beta$, or the color of $q_1$.
\end{enumerate}
\end{corollary}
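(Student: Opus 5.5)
The plan is to derive each item directly from Lemma~\ref{lem:leaf-forcing}, applying it once for every leaf attached in the construction of $H_{k,m}$. Fix a proper $(k-1)$-coloring $\varphi$ of $H_{k,m}$. Because $Q$ is a clique on $p=k-3$ vertices, the colors $\varphi(q_1),\dots,\varphi(q_p)$ are pairwise distinct. Since there are $k-1=p+2$ colors in total, exactly two colors are missing from $Q$, and we name them $\alpha$ and $\beta$. The palette is therefore $\{\varphi(q_1),\dots,\varphi(q_p),\alpha,\beta\}$. This is the same palette used inside the proof of Lemma~\ref{lem:leaf-forcing}, so the lemma's conclusions translate directly into statements about these named colors.

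Next, note that $H_{k,m}$ contains $Q$ together with every attached leaf $L_j(x)$, including all of its edges to $Q$. So the hypotheses of Lemma~\ref{lem:leaf-forcing} hold for each such leaf, and the lemma gives $\varphi(x)\neq\varphi(q_j)$ whenever $L_j(x)$ is attached.

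\textbf{Item (i).} For $x\in\{b_i,d_i,t_i\}$ with $0\le i\le m$, and also for $x\in\{a_0,c_m\}$, the leaves $L_j(x)$ are attached for all $j\in\{1,\dots,p\}$. Hence $\varphi(x)$ avoids every color used on $Q$, which leaves only $\alpha$ and $\beta$.

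\textbf{Items (ii) and (iii).} For $x=a_i$ with $1\le i\le m$, or $x=c_i$ with $0\le i\le m-1$, the leaves $L_j(x)$ are attached exactly for $2\le j\le p$. So $\varphi(x)$ avoids $\varphi(q_2),\dots,\varphi(q_p)$. The remaining palette is $\{\alpha,\beta,\varphi(q_1)\}$.

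There is no real obstacle here. The only care needed is bookkeeping: matching the index ranges in the attachment pattern to the three items, and noticing that $a_0$ and $c_m$ receive all $p$ leaves while the other $a_i$ and $c_i$ lack $L_1$. When $p=1$ (that is, $k=4$), the ranges $2\le j\le p$ are empty, and items (ii) and (iii) correctly say only that $a_i$ and $c_i$ may take any of the three colors.
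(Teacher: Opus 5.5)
Your proposal is correct and follows the paper's own route: the paper also obtains each item by applying Lemma~\ref{lem:leaf-forcing} to every attached leaf $L_j(x)$ and reading off which indices $j$ occur at each spine vertex. Your index bookkeeping is accurate: every $j$ occurs for $b_i,d_i,t_i,a_0,c_m$, and only $2\le j\le p$ for the remaining $a_i$ and $c_i$. Your remark that items (ii) and (iii) are vacuous when $p=1$ is also right.
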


\begin{lemma}\label{lem:not-kminusone}
The graph $H_{k,m}$ is not $(k-1)$-colorable.
\end{lemma}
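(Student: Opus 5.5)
Suppose, for a contradiction, that $H_{k,m}$ has a proper $(k-1)$-coloring. By Corollary~\ref{cor:leaf-restrictions}, let $\alpha,\beta$ be the two colors absent from $Q$ and let $\gamma$ be the color of $q_1$. Every spine vertex then gets a color from $\{\alpha,\beta,\gamma\}$. Moreover, $b_i,d_i,t_i$ (for all $i$), $a_0$ and $c_m$ are restricted to $\{\alpha,\beta\}$. The plan is to propagate a constraint along the chain of spine pentagons, from $S_0$ to $S_m$, and reach a contradiction at $S_m$.

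\textbf{Local analysis of one pentagon.} Consider $S_i=a_ib_ic_id_it_ia_i$ colored from $\{\alpha,\beta,\gamma\}$ with $b_i,d_i,t_i\in\{\alpha,\beta\}$. Since $d_it_i$ is an edge, $d_i$ and $t_i$ receive $\alpha$ and $\beta$ in some order. The odd cycle $S_i$ cannot be 2-colored, so at least one of $a_i,c_i$ receives $\gamma$. I claim a sharper statement: if $a_i\ne\gamma$ then $c_i=\gamma$, and if $c_i\ne\gamma$ then $a_i=\gamma$. Suppose $a_i\in\{\alpha,\beta\}$. Then $t_i$ is the other color of $\{\alpha,\beta\}$, so $d_i=a_i$. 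Also $b_i$ is the other color, so $b_i=t_i$. Now $c_i$ is adjacent to $b_i$ and $d_i$, which carry both $\alpha$ and $\beta$, so $c_i=\gamma$. The symmetric argument handles the other implication.

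\textbf{Propagation.} For $i=0$, $a_0\in\{\alpha,\beta\}$, so the claim forces $c_0=\gamma$. The spine edge $c_0a_1$ then gives $a_1\ne\gamma$, which forces $c_1=\gamma$. By induction, $c_i=\gamma$ for all $0\le i\le m$. This contradicts $c_m\in\{\alpha,\beta\}$ from Corollary~\ref{cor:leaf-restrictions}(i), so $H_{k,m}$ is not $(k-1)$-colorable.

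The only delicate point is the local claim, and specifically checking that $b_i$ and $d_i$ really receive distinct colors. This holds because $t_i$ is adjacent to both $a_i$ and $d_i$, which pins $d_i=a_i$, while $b_i$ is adjacent to $a_i$. Everything else follows directly from Lemma~\ref{lem:leaf-forcing} and Corollary~\ref{cor:leaf-restrictions}.
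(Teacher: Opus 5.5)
Your proof is correct and follows essentially the same route as the paper: both propagate the fact that $c_i$ must receive the color $\gamma$ of $q_1$ along the spine from $S_0$ to $S_m$, using the restrictions of Corollary~\ref{cor:leaf-restrictions} and the spine edges $c_ia_{i+1}$, until the pentagon $S_m$ contradicts $c_m\in\{\alpha,\beta\}$. Your local claim just makes explicit the pentagon-forcing step (that $b_i$ and $d_i$ receive the two distinct colors $\alpha,\beta$), which the paper invokes in one line.
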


\begin{proof}
Suppose that $H_{k,m}$ has a proper $(k-1)$-coloring, and let
$\alpha,\beta$ be as in Corollary~\ref{cor:leaf-restrictions}.

We first show that $c_i$ receives the color of $q_1$ for every
$0\le i\le m-1$.  For $i=0$, Corollary~\ref{cor:leaf-restrictions}(i)
implies that $a_0,b_0,d_0,t_0$ use only $\alpha$ and $\beta$.
Since $S_0=a_0b_0c_0d_0t_0a_0$ is a pentagon, $c_0$ cannot use
$\alpha$ or $\beta$.  Thus, $c_0$ receives a color used on $Q$.
By Corollary~\ref{cor:leaf-restrictions}(iii), $c_0$ avoids the colors
of $q_2,\ldots,q_p$.  Hence, $c_0$ receives the color of $q_1$.

Now let $1\le i\le m-1$, and suppose that $c_{i-1}$ receives the color
of $q_1$.  Since $c_{i-1}a_i$ is an edge, $a_i$ avoids the color of
$q_1$.  By Corollary~\ref{cor:leaf-restrictions}(ii), $a_i$ also avoids
the colors of $q_2,\ldots,q_p$.  Hence $a_i$ uses either $\alpha$ or
$\beta$.  Corollary~\ref{cor:leaf-restrictions}(i) gives the same
restriction for $b_i,d_i,t_i$.  Therefore, the pentagon $S_i$ forces
$c_i$ to receive a color used on $Q$.  Since $c_i$ avoids the colors
of $q_2,\ldots,q_p$ by Corollary~\ref{cor:leaf-restrictions}(iii), it
receives the color of $q_1$.  This proves the claim.

Finally, $c_{m-1}$ receives the color of $q_1$.  Since $c_{m-1}a_m$
is an edge, $a_m$ avoids the color of $q_1$.  By
Corollary~\ref{cor:leaf-restrictions}(ii), $a_m$ also avoids the colors
of $q_2,\ldots,q_p$, and hence uses either $\alpha$ or $\beta$.
Together with Corollary~\ref{cor:leaf-restrictions}(i), this shows that
$a_m,b_m,d_m,t_m$ all use only $\alpha$ and $\beta$.  The pentagon
$S_m$ then forces $c_m$ to receive a color used on $Q$, contradicting
Corollary~\ref{cor:leaf-restrictions}(i), which says that $c_m$ uses only
$\alpha$ and $\beta$.
\end{proof}

Now we prove that $H_{k,m}$ is $k$-critical.
We first record the coloring property of the construction.  After the clique is removed, the remaining graph has a block structure.
A graph is called a \emph{cactus} if any two cycles have at most one vertex
in common. Equivalently, every block of the graph is either an edge or a
cycle. We shall use the following standard fact.

\begin{fact}\label{fact:cactus-3-colorable}
Every cactus is $3$-colorable.
\end{fact}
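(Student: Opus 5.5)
We prove Fact~\ref{fact:cactus-3-colorable} by induction on the number of blocks, using the block--cutvertex tree. Any graph is properly colorable as soon as each of its components is, so we may assume the cactus $G$ is connected. If $G$ has a single block, that block is a vertex, an edge, or a cycle, and each is $3$-colorable. An even cycle needs two colors and an odd cycle needs three.

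Suppose $G$ has at least two blocks. The block--cutvertex tree of a connected graph is a tree, so it has a leaf block $B$. This block contains exactly one cut vertex $v$ of $G$. Let $G'$ be the graph obtained by deleting $V(B)\setminus\{v\}$. Then $G'$ is connected, and its blocks are exactly the blocks of $G$ other than $B$. Hence $G'$ is still a cactus, every one of its blocks being an edge or a cycle, and it has fewer blocks than $G$. By induction $G'$ has a proper $3$-coloring $\varphi$.

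Separately, $B$ is an edge or a cycle, so it has a proper $3$-coloring $\psi$. We permute the colors of $\psi$ so that $\psi(v)=\varphi(v)$. Now combine the two colorings into one map on $V(G)$. The sets $V(G')$ and $V(B)$ intersect only in $v$, and every edge of $G$ lies either in $G'$ or in $B$. Therefore the combined map is well defined, and it is a proper $3$-coloring of $G$.

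No step here is a real obstacle. The only point needing care is the structural claim that deleting a leaf block leaves a cactus with one fewer block. This follows from standard block theory: the other blocks remain $2$-connected or are bridges, and they are unchanged by the deletion.
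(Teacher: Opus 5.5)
Your proof is correct. The leaf-block induction works: you recolor $B$ by a permutation so it agrees with $\varphi$ at the single shared cut vertex $v$, and every edge at a vertex of $V(B)\setminus\{v\}$ lies in $B$. This is the standard argument that $\chi(G)=\max_B \chi(B)$, the maximum taken over the blocks $B$ of $G$.

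The paper gives no proof of this statement. It invokes it as a standard fact, relying on the same equivalent block characterization you used, so your argument simply supplies the missing justification.

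An equally short alternative is degeneracy. Subgraphs of cacti are cacti, and a non-cut vertex of a leaf block (or any vertex of a single-block component) has degree at most $2$. So every cactus is $2$-degenerate, and greedy coloring uses at most $3$ colors. This is the same style of argument the paper itself uses in the proof of Lemma~\ref{lem:critical}. Your gluing argument has the advantage of showing directly that the bound is inherited from the blocks, and it would give $\max_B\chi(B)$ colors for any graph, not just cacti.
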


After deleting $Q$, the spine pentagons form a chain joined by bridges, and each leaf pentagon is attached by a single bridge; hence any two cycles meet in at most one vertex.

\begin{lemma}\label{lem:k-colorable}
The graph $H_{k,m}$ is $k$-colorable.
\end{lemma}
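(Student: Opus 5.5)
The plan is to color $Q$ with $p=k-3$ colors, use the remaining three colors on $H_{k,m}-Q$ via Fact~\ref{fact:cactus-3-colorable}, and check that the result is proper. Concretely, give $q_j$ color $j$ for $1\le j\le p$, and reserve the three colors $p+1,p+2,p+3$, where $p+3=k$.

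First I will verify the structural claim stated just before the lemma: $H_{k,m}-Q$ is a cactus. After $Q$ is removed, the graph consists of the spine pentagons $S_0,\dots,S_m$ joined in a path-like chain by the bridges $c_ia_{i+1}$, together with the leaf pentagons $ruvwzr$, each joined to a spine vertex by the single bridge $xr$. The spine and leaf pentagons are pairwise vertex-disjoint, and the only edges between them are these bridges. Hence every block is either a pentagon or a single edge, so any two cycles share at most one vertex.

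By Fact~\ref{fact:cactus-3-colorable}, $H_{k,m}-Q$ has a proper coloring $\varphi$ using only the colors $\{p+1,p+2,p+3\}$. Combine $\varphi$ with the coloring of $Q$. Edges inside $Q$ are properly colored because $Q$ receives distinct colors. Edges inside $H_{k,m}-Q$ are properly colored by $\varphi$. Every remaining edge joins a vertex of $Q$, which has a color in $\{1,\dots,p\}$, to a vertex outside $Q$, which has a color in $\{p+1,p+2,p+3\}$. Since these color sets are disjoint, such edges are proper too.

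Therefore $H_{k,m}$ is $k$-colorable. The only step needing any care is confirming the cactus structure, which follows directly from the construction, since every edge leaving a pentagon in $H_{k,m}-Q$ is a bridge. No step is a real obstacle; this lemma combines with Lemma~\ref{lem:not-kminusone} to give $\chi(H_{k,m})=k$.
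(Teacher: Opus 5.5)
Your proof is correct and is the same argument the paper gives: color $Q$ with $p$ distinct colors, and color the cactus $H_{k,m}-Q$ with three further colors via Fact~\ref{fact:cactus-3-colorable}. You also explicitly check the cactus structure and the $Q$-to-non-$Q$ edges, which the paper leaves implicit.
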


\begin{proof}
Color the clique $Q$ with $p$ distinct colors. Since $H_{k,m}-Q$ is a cactus whose blocks are edges and pentagons, by Fact \ref{fact:cactus-3-colorable}, $H_{k,m}-Q$ is $3$-colorable. Using three colors disjoint from the colors used in $Q$ gives a $k$-coloring of $H_{k,m}$.
\end{proof}

\begin{lemma}\label{lem:critical}
Every proper subgraph of $H_{k,m}$ is $(k-1)$-colorable.  Hence $H_{k,m}$ is $k$-critical.
\end{lemma}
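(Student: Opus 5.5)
The plan is to use the standard reduction. By Lemma~\ref{lem:not-kminusone} and Lemma~\ref{lem:k-colorable}, $\chi(H_{k,m})=k$. $H_{k,m}$ has no isolated vertices, so every proper subgraph is contained in $H_{k,m}-e$ for some edge $e$. It therefore suffices to show that $H_{k,m}-e$ is $(k-1)$-colorable for every edge $e$.

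Throughout, I color $Q$ with colors $1,\dots,p$ (color $j$ on $q_j$) and use two extra colors $\alpha,\beta$. I first fix a \emph{default} coloring of each leaf $L_j(x)$, valid whenever $x\ne j$: $r=j$ and $u,v,w,z=\alpha,\beta,\alpha,\beta$. The spine is then colored by an explicit propagation scheme:
\begin{itemize}
\item a \emph{forward} phase $c_0=\dots=c_{s-1}=1$, in which $a_i,b_i,d_i,t_i\in\{\alpha,\beta\}$ as in the proof of Lemma~\ref{lem:not-kminusone};
\item a \emph{backward} phase from the right end, in which $a_i=1$ is allowed (since $c_{i-1}\neq 1$). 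Then $S_i$ is colored with $a_i=1$ and $b_i,c_i,d_i,t_i$ alternating in $\alpha,\beta$, which makes $c_i\in\{\alpha,\beta\}$ and permits $a_{i+1}=1$, and so on up to $S_m$. This phase ends with $c_m\in\{\alpha,\beta\}$, as Corollary~\ref{cor:leaf-restrictions}(i) requires.
\end{itemize}
The whole proof is to show that deleting any single edge creates a ``break'' at which the forward phase can hand over to the backward phase.

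I would split into cases by the type of $e$.
\begin{enumerate}
\item $e=c_ia_{i+1}$ is a spine edge. Run the forward phase up to $c_i=1$ and set $a_{i+1}=1$, which is now legal. Then run the backward phase on $S_{i+1},\dots,S_m$.
\item $e$ is an edge of a spine pentagon $S_i$. Then $S_i-e$ is a path. After the forward phase (which gives $a_i\in\{\alpha,\beta\}$, or $a_0$ directly), color this path so that $c_i\in\{\alpha,\beta\}$ and $a_i,b_i,d_i,t_i\in\{\alpha,\beta\}$, which is possible for a path. Then continue with the backward phase.
\item $e$ lies in, or is incident to, a leaf $L_j(x)$: the edge $xr$, a leaf pentagon edge, or an edge from $r,u,v,w,z$ to $Q$. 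I show $x$ may then receive color $j$. For $e=xr$ this is immediate. Otherwise I recolor the leaf so that $r\ne j$:
\begin{itemize}
\item if a pentagon edge is deleted, or an edge $uq_l$ (or $vq_l,wq_l,zq_l$), the colors on $u,v,w,z$ can be arranged so that $u,z$ share a color, say $\alpha$ (using $q_l$'s color on the freed vertex if needed), and then $r=\beta$;
\item if $rq_l$ is deleted, set $r=l$.
\end{itemize}
In each case $r$ avoids $j$ and also avoids whatever color $x$ will get. Freeing $x$ to take color $j$ breaks the chain at $x$'s pentagon:
\begin{itemize}
\item if $x\in\{b_i,d_i,t_i\}$, then $S_i$ can be colored with $x=1$ (when $j=1$) or $x=j$, which lets $c_i\in\{\alpha,\beta\}$; then run the backward phase;
\item if $x=a_0$ or $x=c_m$, the endpoint constraint disappears;
\item if $x=a_i$ or $x=c_i$ with $j\ge2$, set it to $j$, which has the same effect as the color-$1$ break.
\end{itemize}
\item $e=q_aq_b$ lies in $Q$. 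Give $q_a$ and $q_b$ the same color. This frees a third color $\gamma$ outside $Q$. The leaves $L_a(\cdot)$ and $L_b(\cdot)$ still take $r=$ that common color, and the spine uses $\alpha,\beta,\gamma$ freely, since every spine vertex is allowed all three non-$Q$ colors. So $H-Q$, a cactus, is $3$-colored by Fact~\ref{fact:cactus-3-colorable}.
\end{enumerate}

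The main obstacle is the leaf case, where the deleted edge is internal to $L_j(x)$. There one must check that the recolored leaf never forces a new conflict at $x$. Specifically, one needs $r$'s color to differ from $x$'s chosen color, and this is not automatic when $x$ itself must use $\alpha$ or $\beta$. I would handle this by choosing which of $\alpha,\beta$ goes on $u,z$ after $x$ is colored, using the $\alpha\leftrightarrow\beta$ symmetry of the leaf coloring. I would also verify the handover at the break pentagon separately for each of the positions $a,b,c,d,t$.
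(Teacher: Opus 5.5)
Your proposal is correct, but it takes a different route from the paper. The paper uses no explicit colorings. It first computes that every vertex outside $Q$ has degree exactly $p+2=k-1$, and notes that $H_{k,m}-Q$ is connected. For $e\notin E(Q)$, an endpoint $x\notin Q$ of $e$ has degree $k-2$ in $H_{k,m}-e$. The paper then deletes the vertices of $H_{k,m}-Q$ outward along a spanning tree rooted at $x$, and finally the vertices of $Q$. Each vertex has at most $k-2$ neighbours still present when it is removed, so greedy coloring in reverse order uses $k-1$ colors. The case $e\in E(Q)$ is handled exactly as in your Case 4.

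So the paper needs only the degree count and does no case analysis on where $e$ lies. Your argument makes the forcing chain of Lemma~\ref{lem:not-kminusone} explicit and shows that every edge is a place where the chain can be broken. This says more about why the construction is critical. The cost is a check for every edge type and every attachment position ($a_0$, $c_m$, $a_i$ and $c_i$ with $j\ge 2$, and $b_i,d_i,t_i$), and you carry these checks out correctly.

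Two statements need tightening:
\begin{itemize}
\item If the deleted edge is $uq_l$ or $zq_l$, you cannot make $u$ and $z$ share a color. They are the ends of the odd path $uvwz$, and $v,w$ are confined to $\{\alpha,\beta\}$. Still, giving the freed vertex color $l$ leaves a color of $\{\alpha,\beta\}$ available for $r$, which is all you need.
\item In Case 4, the remark that the leaves keep $r$ equal to the common $Q$-color contradicts your final, correct sentence, which colors all of $H_{k,m}-Q$ from $\{\alpha,\beta,\gamma\}$.
\end{itemize}

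The ``main obstacle'' you anticipate does not actually arise. In your scheme $x$ always receives color $j$, which is a $Q$-color, while the recolored $r$ lies in $\{\alpha,\beta\}$ or equals $l\neq j$.
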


\begin{proof}
We first compute the degrees of vertices outside $Q$. Each vertex of a spine pentagon has two neighbors on that pentagon.
For $0\leq i\leq m$,  the vertices $b_i,d_i,t_i$, together with $a_0$ and $c_m$, are attached to all $p$ leaves. Hence, their degree is $2+p$.
 For $1\le i\le m$, the vertex $a_i$ has one bridge to the spine and is attached to $p-1$ leaves; similarly, for $0\le i<m$, the vertex $c_i$ has one spine bridge and $p-1$ leaves.  Thus, these vertices also have degree $2+1+(p-1)=p+2$.
 Finally, inside each leaf the vertices $u,v,w,z$ have two neighbors on the leaf pentagon and are adjacent to all $p$ vertices of $Q$, while $r$ has two neighbors on the leaf pentagon, the attachment vertex $x$, and $p-1$ neighbors in $Q$.  Hence every vertex outside $Q$ has degree $p+2=k-1$.

We now show that deleting any edge leaves a $(k-1)$-colorable graph.  
First suppose $e\notin E(Q)$ and pick an endpoint $x$ of $e$ with $x\notin Q$.  Since $d_{H_{k,m}}(x)=k-1$, we have
$d_{H_{k,m}-e}(x)=k-2.$
Let $F=H_{k,m}-Q$.  The spine pentagons are connected by the edges $c_i a_{i+1}$ for $0\le i<m$, and every leaf pentagon is attached to a spine vertex; hence $F$ is connected.
Let $T$ be a spanning tree of $F$ rooted at $x$.  Delete the vertices of $F$ in non‑decreasing order of their distance from $x$ in $T$.  The root $x$ has degree $k-2$ in $H_{k,m}-e$.  For any vertex $v\neq x$ of $F$ with parent $u$ in $T$: if $v$ is an endpoint of $e$, then
$d_{H_{k,m}-e}(v)=k-2.$
Otherwise $uv\neq e$, so $uv$ belongs to $H_{k,m}-e$.  Because $u$ is deleted before $v$, the degree of $v$ when it is deleted is at most $d_{H_{k,m}}(v)-1=k-2.$
After all vertices of $F$ have been removed, only the clique $Q\cong K_p$ remains; each of its vertices has degree $p-1=k-4\le k-2$.  Thus every vertex has at most $k-2$ neighbours still present at the moment of deletion.  Colouring the vertices in the reverse order therefore yields a proper $(k-1)$-colouring of $H_{k,m}-e$.

Now suppose $e\in E(Q)$.  Then $Q-e$ is $(p-1)$-colorable, because the endpoints of $e$ may share a color.  The graph $H_{k,m}-Q$ is a cactus, hence $3$-colorable by Fact~\ref{fact:cactus-3-colorable}.  Using disjoint color sets for $Q-e$ and $H_{k,m}-Q$ gives a proper coloring of $H_{k,m}-e$ with
$
(p-1)+3 = p+2 = k-1
$
colors.  Hence $H_{k,m}-e$ is also $(k-1)$-colorable.

We have shown that every edge-deleted subgraph is $(k-1)$-colorable. Now let $J$ be an arbitrary proper subgraph of $H_{k,m}$.  If $J$ has the same vertex set as $H_{k,m}$, then it misses some edge $e$, so $J\subseteq H_{k,m}-e$.  If $J$ omits a vertex $v$, pick an edge $e$ incident to $v$ (such an edge exists because there are no isolated vertices); then $J\subseteq H_{k,m}-e$.  In either case $J$ is a subgraph of a $(k-1)$-colorable graph, hence itself $(k-1)$-colorable.

By Lemmas~\ref{lem:not-kminusone} and~\ref{lem:k-colorable}, $\chi(H_{k,m})=k$.  Since every proper subgraph is $(k-1)$-colorable, $H_{k,m}$ is $k$-critical.
\end{proof}

\begin{lemma}\label{lem:chord-bound}
Every cycle $C$ of $H_{k,m}$ has at most
$\binom{p}{2}+10p^{2}+60p$
chords.
\end{lemma}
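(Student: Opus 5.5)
We split the chords of $C$ according to where their endpoints lie: chords inside $Q$, chords between $Q$ and $V(H_{k,m})\setminus Q$, and chords with both endpoints outside $Q$. The first class contributes at most $\binom{p}{2}$, since $Q$ has only that many edges. The other two classes are controlled by how $C$ can meet the cactus $F=H_{k,m}-Q$.

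The basic observation concerns the structure of $C\cap F$. Every edge of $F$ is either a bridge of $F$ or lies on exactly one pentagon block. If $C$ contains no vertex of $Q$, then $C\subseteq F$, so $C$ is one of the pentagons. A pentagon is an induced cycle in $F$, so its only possible chords run to $Q$, and there are none because $V(C)\cap Q=\emptyset$. Hence $C$ has no chords at all in this case.

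Now suppose $C$ meets $Q$. Then $C-Q$ is a disjoint union of at most $|V(C)\cap Q|\le p$ paths $P_1,\dots,P_s$ in $F$, where each $P_\ell$ joins two neighbors of $Q$. The key step is to bound how many vertices of $F$ such a path can contain. Vertices adjacent to $Q$ are exactly the leaf vertices $r,u,v,w,z$; spine vertices have no neighbor in $Q$. A path in $F$ between two leaf vertices can therefore pass through at most two leaf pentagons. It may also traverse the spine in between, and this part could be long. However, chords with both endpoints in $F$ must be edges of $F$ joining vertices of $C\cap F$. Since $F$ is a cactus and each $P_\ell$ is a path in it, the chords of $C$ inside $F$ are edges of $F$ not on $C$. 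Such edges are either bridges between two different paths $P_\ell$, or pentagon edges whose pentagon is partly used.

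I would count these chords per pentagon block and per bridge. A block used by some $P_\ell$ contributes at most $5$ such chords. A bridge joining two different paths contributes $1$. Moreover, only a bounded number of blocks can be touched at more than one place. I expect to show that the $F$-internal chords are at most about $10p^2$, coming from pairs of paths, each pair giving $O(1)$ blocks where they interact, because the cactus tree structure lets two paths interact only in $O(1)$ blocks.

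For chords between $Q$ and $C\cap F$, only the leaf vertices of $C$ matter. Each such vertex has at most $p$ neighbors in $Q$. Each path $P_\ell$ contains at most $10$ leaf vertices (two leaf pentagons, five vertices each), so this class gives at most $p\cdot 10\cdot s\le 10p^2$ chords. The remaining budget of $60p$ absorbs the $F$-internal chords, with roughly $60$ per path.

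The main obstacle is this $F$-internal count. I must verify that two distinct paths $P_\ell,P_{\ell'}$ cannot repeatedly run alongside each other along the spine creating many chords. The spine is a chain of pentagons joined by bridges, and two vertex-disjoint paths can share pentagon $S_i$ (one using each side) only if both traverse it. This is impossible beyond the endpoints, since traversal requires passing through the cut vertices $a_i$ and $c_i$. So each path interacts with others only near its ends, giving a constant per path. I will bound this constant generously by $60$ and thereby obtain the total $\binom{p}{2}+10p^2+60p$.
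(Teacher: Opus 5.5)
Your overall plan matches the paper's. You split the chords into those inside $Q$, those between $Q$ and $F=H_{k,m}-Q$, and those inside $F$. You observe that a cycle avoiding $Q$ is a chordless pentagon, cut $C$ into $s\le p$ segments, and bound the $Q$--$F$ chords by $10ps$ because each segment meets at most two leaf pentagons. (The real reason for that last fact is that each leaf pentagon hangs from $F$ by the single bridge $xr$, so a segment meeting it must end in it; the fact that spine vertices have no neighbours in $Q$ does not give it.)

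The gap is the count of chords inside $F$. This is the only nontrivial part of the lemma, and you announce it rather than prove it (``I expect to show'', ``I will bound this constant generously by $60$''). The mechanism you sketch is also wrong. Two disjoint segments can share a spine pentagon $S_i$ without both traversing it: one segment may pass along $c_{i-1}a_ib_ic_ia_{i+1}$ while another comes from a leaf at $d_i$, runs $d_it_i$ and ends in a leaf at $t_i$, making $c_id_i$ and $t_ia_i$ chords. A single segment can also create a chord by itself: entering at $a_i$, running $a_it_id_ic_ib_i$ and leaving into a leaf at $b_i$ makes $a_ib_i$ a chord. So ``blocks touched at more than one place'' is not the right criterion. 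A count over pairs of segments would also give a $p^2$ term that cannot fit into $60p$ once $10p^2$ has been spent on $Q$--$F$ edges.

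What is needed is a count by edge type, using ``spine pentagons carrying a leaf met by $C$'' as the criterion.
\begin{itemize}
\item Pentagon edges: every segment ends in leaf pentagons, so $C$ meets at most $2s$ leaf pentagons, hanging from at most $2s$ spine pentagons. Any other spine pentagon met by $C$ can only be entered and left through its two spine bridges. It is therefore crossed by a single segment through $a_i$ and $c_i$ along one arc, the interior of the other arc is off $C$, and no edge of $S_i$ is a chord. Hence there are at most $5\cdot 4s=20s$ pentagon-edge chords.
\item Attachment edges: an edge $xr$ can be a chord only if its leaf is met, giving at most $2s$.
\item Spine edges: if $c_ia_{i+1}$ is a chord, its ends lie on different segments $P_r\neq P_{r'}$, since one segment containing both would have to use the bridge. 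Let $I_\ell$ be the set of spine indices met by $P_\ell$; it is an interval because spine edges are bridges of $F$. Then $i=\max I_r$ and $i+1=\min I_{r'}$, so $c_ia_{i+1}$ is a boundary spine edge of $I_r$. Each interval has at most two such edges, so there are at most $2s$ of these chords.
\end{itemize}
This gives at most $24s\le 60p$ chords inside $F$, which is exactly the step your proposal leaves open.
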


\begin{proof}
Set $T=H_{k,m}-Q$. Its blocks are the pentagons $S_i$, the attached pentagons $L_j(x)$, and the bridge or attachment edges between them.

First suppose $C\cap Q=\varnothing$. Then $C$ is a cycle in $T$. Since the only cycles of $T$ are the spine pentagons and the leaf pentagons, $C$ is one of these pentagons. Moreover, none of these pentagons has a chord in $H_{k,m}$. Hence $C$ has no chord.

Now suppose $C\cap Q \neq \varnothing$.  Delete the vertices of $Q$ from $C$ and then obtain a union of nonempty paths in $T$:
$P_1 \cup \dots \cup P_t .$
Since each $P_i$ lies between two consecutive vertices of $C\cap Q$ along $C$, we have $t \le |C\cap Q| \le p$.

We first bound the number of chords incident with $Q$.  The number of edges with both ends in $Q$ is at most $\binom{p}{2}$.  Next, we count edges between $Q$ and $C-Q$.  In $T$, only vertices of leaf pentagons are adjacent to $Q$.  Each leaf pentagon is attached to the rest of $T$ by a single cut vertex.  Hence, if a path $P_i$ meets a leaf pentagon $L$, then $L$ contains an endpoint of $P_i$; otherwise $P_i$ would enter and leave $L$ through the same cut vertex.  Thus, the paths $P_1,\dots,P_t$ together meet at most $2t$ leaf pentagons.  Since each leaf pentagon has five vertices, at most $10t$ vertices of $C-Q$ are adjacent to $Q$.  Therefore, the number of edges between $Q$ and $C-Q$ is at most $10pt$, and the number of chords incident with $Q$ is at most
$\binom{p}{2}+10pt.$

We now bound the chords with both endpoints in $T$. Such a chord is either an edge of a spine or leaf pentagon, a leaf attachment edge, or a spine edge $c_i a_{i+1}$.  The cycle $C$ meets at most $2t$ leaf pentagons, and these leaf pentagons are attached at vertices lying on at most $2t$ spine pentagons.  Counting all pentagon edges in these leaf and spine pentagons gives at most
$5(2t+2t)=20t$
possible chords of this type.  No other spine pentagon contributes a pentagon-edge chord: if a path $P_j$ meets such a spine pentagon $S_i=a_i b_i c_i d_i t_i a_i$, then it enters through one of $a_i,c_i$ and leaves through the other, so it follows one of the two $a_i c_i$-paths on $S_i$; every unused edge of $S_i$ then has an endpoint outside $C$.

A leaf attachment edge can be a chord only when $C$ meets the corresponding leaf pentagon.  Finally, consider a spine edge
$e_i=c_i a_{i+1},$ $0\le i<m.$
For each $j\in\{1,\ldots,t\}$, let
\[
I_j=\bigl\{h\in\{0,\ldots,m\}:
V(P_j)\cap V(S_h)\neq\varnothing\bigr\}.
\]
Since the spine pentagons form a linear chain and every spine edge is
a bridge of $T$, each nonempty set $I_j$ is an interval of
consecutive integers. If
$I_j=\{\ell_j,\ell_j+1,\ldots,r_j\},$
we call
$c_{\ell_j-1}a_{\ell_j}$ when $\ell_j>0,$
and
$c_{r_j}a_{r_j+1}$
when $r_j<m$
the boundary spine edges of $I_j$.

Suppose that $e_i$ is a chord of $C$. Its endpoints cannot lie on
the same path $P_j$, since the subpath of $P_j$ joining $c_i$ to
$a_{i+1}$ would then have to use the bridge $e_i$, contradicting
$e_i\notin E(C)$. Hence
$c_i\in V(P_r)$
and
$a_{i+1}\in V(P_s)$
for some distinct $r,s$. The path $P_r$ cannot meet $S_{i+1}$,
and the path $P_s$ cannot meet $S_i$, since otherwise the
corresponding path would have to use $e_i$. It follows from the
interval property that
$i=\max I_r$
and
$i+1=\min I_s.$
Thus, $e_i$ is a boundary spine edge of both $I_r$ and $I_s$.
Since each nonempty interval $I_j$ has at most two boundary spine
edges, the number of spine-edge chords is at most $2t$.

Consequently, the number of chords with both endpoints in $T$ is at
most
$20t+2t+2t=24t<60t.$
Combining this with the estimate for chords incident with $Q$, and
using $t\le p$, we obtain $H_{k,m}$ has at most 
 $$\binom{p}{2}+10pt+60t
 \le \binom{p}{2}+10p^2+60p$$
chords.
\end{proof}

\begin{proof}[\bf Proof of Theorem \ref{thm:voss-fixed-intro}]
The order is computed in \eqref{eq:H-order}.  Criticality follows from Lemma \ref{lem:critical}.  The chord bound follows from Lemma \ref{lem:chord-bound} with $p=k-3$.
\end{proof}

\begin{corollary}\label{cor:voss-false}
Conjecture \ref{prob:voss} is false.
\end{corollary}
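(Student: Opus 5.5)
The plan is to exhibit, for a fixed $k\ge 4$, the family $\{H_{k,m}\}_{m\ge1}$ as a counterexample to Conjecture~\ref{prob:voss}. Everything needed has already been established in Theorem~\ref{thm:voss-fixed-intro}, or, for $k=4$, in Theorem~\ref{thm:apssv}.

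\textbf{Step 1: the family is valid input for $g_k$.} By Lemma~\ref{lem:critical}, each $H_{k,m}$ is $k$-critical. By \eqref{eq:H-order}, it has $n_m=(25k-80)m+26k-73$ vertices. Since $25k-80>0$ for $k\ge4$, we get $n_m\to\infty$ as $m\to\infty$.

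\textbf{Step 2: bound $g_k$ along this sequence.} By Lemma~\ref{lem:chord-bound}, every cycle of $H_{k,m}$ has at most
\[
c_k:=\binom{k-3}{2}+10(k-3)^2+60(k-3)
\]
chords. In particular this holds for every odd cycle. An odd cycle exists because $\chi(H_{k,m})=k\ge3$, although its existence is not needed for the upper bound. By the definition of $g_k(n)$ as the largest $\alpha$ that every $n$-vertex $k$-critical graph achieves, we conclude $g_k(n_m)\le c_k$ for all $m\ge1$.

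\textbf{Step 3: conclude.} The function $g_k$ is bounded by the constant $c_k$ on the infinite sequence $n_m$. Hence $g_k(n)\not\to\infty$, so Conjecture~\ref{prob:voss} fails for every $k\ge4$, and in particular for every $k\ge5$; the case $k=4$ is also covered by Theorem~\ref{thm:apssv}.

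There is no real obstacle here, since all the substantive work (criticality and the chord count) was done in the preceding lemmas. The only points to state carefully are these. First, the conjecture concerns the quantity $g_k(n)$ only at orders $n$ where $k$-critical graphs exist, so it suffices to exhibit the subsequence $n_m$. Second, the bound in Lemma~\ref{lem:chord-bound} depends on $k$ but not on $m$.
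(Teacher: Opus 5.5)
Your proposal is correct and takes essentially the same route as the paper. Both fix $k\ge4$ and invoke Theorem~\ref{thm:voss-fixed-intro}: $H_{k,m}$ is $k$-critical, its order $(25k-80)m+26k-73$ tends to infinity, and its chord bound does not depend on $m$. From this both conclude that $g_k$ is bounded along the subsequence $n_m$.
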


\begin{proof}
Fix $k\ge 4$.  By Theorem~\ref{thm:voss-fixed-intro}, there exist $k$-critical graphs $H_{k,m}$ whose orders tend to infinity as $m$ grows, while every odd cycle in $H_{k,m}$ has at most
\[
\binom{k-3}{2}+10(k-3)^2+60(k-3)
\]
chords.  This upper bound depends only on $k$ and not on $m$.  Hence, for fixed $k$, the values $g_k(n)$ are bounded along an infinite sequence of orders $n=|V(H_{k,m})|$.  Therefore $g_k(n)$ does not tend to infinity, contradicting Conjecture~\ref{prob:voss}.
\end{proof}

\begin{remark}
For $k=4$, Theorem~\ref{thm:apssv} gives the sharper estimate
$g_4(20m+31)\le 10.$
Although Theorem~\ref{thm:apssv} yields a stronger bound for $k=4$, our construction shows that for every fixed $k\ge4$, an infinite family of $k$-critical graphs in which the number of chords on every odd cycle is bounded by a constant depending only on $k$.
\end{remark}

\section{On a conjecture of K\'ara--Kr\'al}

The purpose of this section is to prove $f(31,31)=8$: we first prove that every $31$-vertex graph with minimum degree at least $8$ contains a cycle with at least $31$ chords, and then give an example showing that minimum degree $7$ does not suffice.

Let $G$ be a connected graph. A vertex $v\in V(G)$ is called a \textit{cut vertex }of $G$ if $G-v$ has more connected components than $G$.
A \textit{block} of $G$ is a maximal connected subgraph of $G$ that has no cut vertex. 
A block consisting of a single edge is called a \emph{bridge block}.  Thus every block is either $2$-connected, a single vertex, or a bridge block.
A block $B^*$ of $G$ is called an \textit{end-block} if it contains exactly one cut vertex of $G$.

Let $B$ be a block of $G$. A vertex $v \in V(B)$ is called \textit{private to $B$} (or a \textit{private vertex} of $B$) if it is not a cut vertex of $G$.
The \textit{private vertex set} of a block $B$ is defined by
$Q(B)=\{v\in V(B): v\text{ is not a cut vertex of }G\}.$
Equivalently, $Q(B)$ is the set of vertices of $B$ which are private to $B$.
The \textit{private size} of a block $B$ is
$q(B)=|Q(B)|.$
We shall use the following fact. 


\begin{fact}\label{fact:private}
    If $v$ is private to a block $B$, then every neighbor of $v$ in $G$ belongs to $B$; that is,
$N_G(v)\subseteq V(B).$
Consequently,
$d_G(v)=d_B(v).$
In particular, if $\delta(G)\ge d$ and $v$ is private to $B$, then
$d_B(v)\ge d.$
\end{fact}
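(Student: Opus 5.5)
The statement in question is Fact~\ref{fact:private}, and it follows directly from the definition of blocks and cut vertices. I will argue by contradiction. Let $v$ be private to a block $B$, that is, $v\in V(B)$ and $v$ is not a cut vertex of $G$. Suppose some neighbor $w$ of $v$ lies outside $V(B)$, and consider the edge $vw$. The edge $vw$ is a connected subgraph with no cut vertex, so it lies in some block $B'$. The block $B'$ differs from $B$, because $w\notin V(B)$.

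The key step is the standard property that two distinct blocks share at most one vertex, and that any shared vertex is a cut vertex of $G$. Since $v\in V(B)\cap V(B')$, this forces $v$ to be a cut vertex, contradicting privacy. If I want to avoid quoting that property, I would prove it directly.

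\begin{itemize}
\item First, $B\cup B'$ has no cut vertex if the two blocks share two vertices. Deleting one vertex from each of $B$ and $B'$ leaves each connected, and they still share a vertex. Maximality of $B$ is then contradicted.
\item Second, suppose they share only $v$ and $v$ is not a cut vertex. Then $G-v$ contains a path from $w$ to some vertex $u\in V(B)\setminus\{v\}$. Here $u$ exists: if $B$ were the single vertex $v$, then $B\subsetneq B'$, contradicting the maximality of $B$.
\item Adding this path, together with the edges $vw$ and the part of $B$ around $u$, gives a subgraph strictly containing $B$ with no cut vertex. The reason is that $v$ and $u$ are joined by two internally disjoint routes, one through $B$ and one through $w$ and the path, so the subgraph is 2-connected or a cycle-like extension of $B$. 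This again contradicts maximality.
\end{itemize}

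Hence $N_G(v)\subseteq V(B)$. Since every edge $vx$ with $x\in V(B)$ lies in $B$ (blocks are induced, by maximality), we get $d_G(v)=d_B(v)$. The final claim then follows: $d_B(v)=d_G(v)\ge\delta(G)\ge d$. I expect no real obstacle here. The only care needed is the degenerate situation where $B$ is a single vertex or a bridge block, which the maximality argument above handles.
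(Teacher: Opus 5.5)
Your proof is correct and is the standard argument. The paper states this fact without proof and relies on exactly the property you invoke: distinct blocks meet in at most one vertex, and any such vertex is a cut vertex of $G$. Your second bullet alone gives the contradiction, so the first bullet is unnecessary. That bullet is cleanest if you choose the path in $G-v$ from $w$ to meet $V(B)$ only in its last vertex $u$ and add all of $B$ (not just "the part around $u$"). Then $B$ plus the ear $vw\cdots u$ between the distinct vertices $v,u$ visibly has no cut vertex.
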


\subsection{Main tools}
In this subsection we collect the tools used to force a cycle with at least $31$ chords.

In \cite{Ash1985}, the proof of Theorem~\ref{thm:Ash1985} splits into two cases depending on whether $G$ is Hamiltonian.  If $G$ is Hamiltonian, the result follows from Lemma~\ref{lem:ash-counting}; otherwise, it follows from Lemma~\ref{lem:ash-terminal}.

\begin{lemma}[Lemma 2 in \cite{Ash1985}]
\label{lem:ash-counting}
Let $d\ge 3$.  Let $G$ be a Hamiltonian graph and $C$ be a Hamilton cycle of $G$. Set
$Y\subseteq V(G)$.  Suppose that
$d_G(y)\ge d $ for every $y\in Y.$
If either $|Y|\ge 2d$, or $Y$ contains $d$ independent vertices,
then $C$ has at least $d(d-2)$ chords.
\end{lemma}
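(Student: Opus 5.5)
We must prove Ash's Lemma 2. Let $C=v_1v_2\cdots v_nv_1$ be the Hamilton cycle, so every edge of $G$ off $C$ is a chord of $C$. For $y\in Y$, the number of chords at $y$ is $d_G(y)-2\ge d-2$. The plan is to count chord endpoints at $Y$ and correct for double counting. The only loss comes from chords with both ends in $Y$.

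\textbf{Independent case.} Suppose $Y$ contains an independent set $I$ with $|I|=d$. Then no chord joins two vertices of $I$. Each vertex of $I$ has at least $d-2$ chords, and these are pairwise distinct. So $C$ has at least $d(d-2)$ chords.

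\textbf{Case $|Y|\ge 2d$.} We may shrink $Y$ to exactly $2d$ vertices. Let $H$ be the graph of chords with both ends in $Y$, and let $e_H$ be its number of edges. Summing chord-degrees over $Y$ counts each chord of $H$ twice and every other chord at $Y$ at most once. Hence
\[
\#\text{chords}\ \ge\ |Y|(d-2)-e_H .
\]
This bound is good when $e_H$ is small and useless when $H$ is dense.

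So I would split according to the independence number of $H$.

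\emph{Subcase $\alpha(H)\ge d$.} An independent set of $H$ is independent in $G$ apart from $C$-edges between cyclically consecutive vertices. I would find $d$ vertices of $Y$ pairwise independent in $G$ and reduce to the first case. To handle consecutive pairs, use that $C$ restricted to $Y$ has maximum degree $2$; taking an appropriate subset of a larger independent set avoids them. This is where I expect friction.

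\emph{Subcase $\alpha(H)\le d-1$.} Since $H$ is dense, count chords differently: each chord inside $Y$ contributes once, so
\[
\#\text{chords}\ \ge\ e_H+\sum_{y\in Y}\bigl(d_G(y)-2-d_H(y)\bigr)^+ .
\]
Turán's theorem applied to the complement of $H$ gives $e_H\ge$ about $2d\cdot\frac{2d}{2(d-1)}\cdot\ldots$, which is too weak on its own. A cleaner route is a greedy argument. Order the vertices of $Y$ along $C$ and assign to each $y$ its $d-2$ chords. Each chord inside $Y$ is counted twice, but the total over $Y$ is at least $2d(d-2)$. Therefore the number of distinct chords is at least half of this, namely $d(d-2)$, plus the chords leaving $Y$.

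This last observation in fact makes the whole $|Y|\ge 2d$ case trivial. Every chord has at most two ends in $Y$. The number of chord-ends at $Y$ is at least $2d(d-2)$, so there are at least $d(d-2)$ chords. Thus the main bound is a one-line double count, and the case analysis above is unnecessary.

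The final proof is therefore simply two double counts:
\begin{itemize}
\item if $|Y|\ge 2d$: the number of chords is at least $\tfrac12\sum_{y\in Y}(d_G(y)-2)\ge \tfrac12\cdot 2d(d-2)=d(d-2)$;
\item if $Y$ has $d$ independent vertices: their chord sets are disjoint, giving at least $d(d-2)$ chords.
\end{itemize}
The only point needing care is that every non-$C$ edge is a chord. This holds because $C$ is Hamiltonian, so every vertex of $G$ lies on $C$, and exactly two $C$-edges meet each vertex. No step is a serious obstacle.
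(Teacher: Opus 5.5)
The paper gives no proof of this lemma; it quotes it from Ash, so there is no in-paper argument to compare against. Your final argument is correct and complete on its own terms.

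\begin{itemize}
\item Since $G$ is simple and $C$ is a Hamilton cycle, every edge outside $E(C)$ joins two vertices that are non-consecutive on $C$. Hence each $y\in Y$ is incident with at least $d_G(y)-2\ge d-2$ chords.
\item If $|Y|\ge 2d$, there are at least $2d(d-2)$ chord-ends at $Y$. Each chord has at most two ends, so there are at least $d(d-2)$ chords.
\item If $I\subseteq Y$ is an independent set of size $d$, the chord sets at distinct vertices of $I$ are pairwise disjoint, since a shared chord would join two vertices of $I$. This again gives at least $d(d-2)$ chords.
\end{itemize}

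You should delete the exploratory split according to $\alpha(H)$, including the Tur\'an remark and the unresolved ``friction'' about consecutive pairs. The one-line double count supersedes all of it, and leaving it in only obscures a two-line proof.

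Two side observations. The hypothesis $d\ge 3$ is never used. The paper only ever invokes the $|Y|\ge 2d$ alternative, in the Hamiltonian case of Theorem~\ref{thm:one-exception-ash} with $Y=V(G)\setminus\{z\}$, and your double count applies there verbatim.
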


Before stating Lemma \ref{lem:ash-terminal}, we give a terminal-cycle setup \cite{Ash1985}.

\vspace{2mm}
\noindent{\bf Terminal-cycle setup.}
Let $G$ be a $2$-connected non-Hamiltonian graph and let $v_0\in V(G)$.
Choose a longest path $P=v_0v_1\cdots v_\ell$ starting at $v_0$; among all such paths, select one that maximizes the length of the terminal cycle defined below. 

Since $P$ is maximal, every neighbor of $v_\ell$ is in $P$.  
Because $G$ is $2$-connected, $v_\ell$ has a neighbor on $P$ distinct from $v_{\ell-1}$.  
Hence the index $s=\min\{\,i : v_i v_\ell\in E(G)\,\}$ satisfies $s\le \ell-2$.  
Write
$u_m=v_s,\; u_{m-1}=v_{s+1},\; \dots,\; u_1=v_\ell .$
Then
$C = u_m u_{m-1}\cdots u_1 u_m$
is a cycle, called the \emph{terminal cycle} of $P$.  Set
$H = G[\{u_1,u_2,\ldots,u_m\}].$

A vertex $w\in V(H)\setminus\{u_m\}$ is \emph{accessible} if $H$ contains a Hamilton path from $u_m$ to $w$.  
Let $W(H)$ denote the set of accessible vertices.  
This construction is the \emph{terminal-cycle setup}.
We record it separately because it will be used in a rooted form below.

The following lemma is not a separate statement in \cite{Ash1985}: it is distilled from the proof of Ash's Theorem~1 (see the part of the case $G$ is non-Hamiltonian). 

\begin{lemma}[Terminal-cycle lemma \cite{Ash1985}]
\label{lem:ash-terminal}
In the terminal-cycle setup, let $d\ge 3$.  If
$u_1,u_{m-1}\in W(H)$
and
$d_H(a)=d_G(a)\ge d$  for all $ a\in W(H)$,
then $G$ contains a cycle with at least $d(d-2)$ chords.
\end{lemma}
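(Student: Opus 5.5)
The plan is to reduce the statement to the Hamiltonian counting lemma, Lemma~\ref{lem:ash-counting}, applied inside $H$ with Hamilton cycle $C$ and a suitable set $Y\subseteq W(H)$. This works because $C=u_mu_{m-1}\cdots u_1u_m$ is a Hamilton cycle of $H=G[\{u_1,\dots,u_m\}]$, and $H$ is an induced subgraph of $G$, so every chord of $C$ in $H$ is a chord of $C$ in $G$. It therefore suffices to find $Y\subseteq W(H)$ with $d_H(y)\ge d$ for all $y\in Y$, and either $|Y|\ge 2d$ or $Y$ containing $d$ independent vertices. The degree condition is exactly the hypothesis $d_H(a)=d_G(a)\ge d$ for $a\in W(H)$, so the whole task is to show that $W(H)$ is large or well spread.

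\textbf{Growing $W(H)$ by rotations.} Let $w\in W(H)$ and let $R_w$ be a Hamilton path of $H$ from $u_m$ to $w$. If $w$ is adjacent to a vertex $x$ of $R_w$ other than its predecessor, then a Pósa rotation (reverse the segment of $R_w$ after $x$ and add the edge $xw$) produces a new Hamilton path of $H$ starting at $u_m$. Its endpoint is the successor $x^+$ of $x$ on $R_w$, so $x^+\in W(H)$ unless $x^+=u_m$, which cannot happen since $u_m$ is the start of $R_w$. Because all $\ge d$ neighbours of $w$ lie in $H$, each accessible vertex spawns at least $d-1$ accessible vertices on its own path.

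I would run this rotation closure twice. Starting from $u_1$, whose path is $u_mu_{m-1}\cdots u_1$ along $C$, rotations give accessible vertices that are successors, in one direction of $C$, of neighbours of $u_1$. Starting from $u_{m-1}$, which is accessible by hypothesis, the natural Hamilton path is $u_m u_1 u_2\cdots u_{m-1}$, using the edge $u_1u_m$. Rotations from $u_{m-1}$ then give accessible vertices that are neighbours of $u_{m-1}$ shifted in the \emph{opposite} direction along $C$. Combining the two directions should yield about $2(d-1)$ accessible vertices in $W(H)$.

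\textbf{Closing the gap.} The two rotation families may overlap, and they may fall just short of $2d$. In that case I would switch to the independence alternative of Lemma~\ref{lem:ash-counting}. The idea is to show that the accessible vertices obtained from a single path $R_w$ form an independent set. If two such rotation endpoints $x^+$ and $y^+$ were adjacent, a double rotation would produce either a longer path from $v_0$ (contradicting the maximality of $P$), or a longest path whose terminal cycle is longer than $C$ (contradicting the choice of $P$ maximizing the terminal cycle). For the longest-path contradiction I would prefix $v_0\cdots v_{s-1}$ to the Hamilton paths of $H$. Since $w$ has $\ge d$ neighbours in $H$, this gives $d$ independent accessible vertices once the boundary cases are handled. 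These boundary cases are the neighbours adjacent to $u_m$ and the use of the two endpoints $u_1$ and $u_{m-1}$, and they are where the hypothesis $u_1,u_{m-1}\in W(H)$ enters.

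\textbf{Main obstacle.} I expect the hardest step to be this independence or extremality argument: showing that adjacency between rotation endpoints contradicts either the maximality of $P$ or the maximality of the terminal cycle. This requires careful case analysis of where the endpoints sit relative to $u_m$ and $v_s$. Once a set $Y\subseteq W(H)$ with $|Y|\ge 2d$ or with $d$ independent vertices is in hand, Lemma~\ref{lem:ash-counting} gives at least $d(d-2)$ chords of $C$, and hence a cycle of $G$ with at least $d(d-2)$ chords.
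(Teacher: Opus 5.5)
\paragraph{Verdict.} Your proposal has a genuine gap. The paper gives no proof of its own here; it quotes the statement from the non-Hamiltonian case of Ash's argument. Your plan, however, cannot succeed: the reduction to Lemma~\ref{lem:ash-counting} with $Y\subseteq W(H)$ is not always possible, and the independence step meant to close the gap is false.

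\paragraph{A counterexample to the reduction.} Take $d\ge 3$ and $G=K_2\vee 3K_{d-1}$, with hubs $x,y$ and cliques $A_1,A_2,A_3$ of order $d-1$. This graph is $2$-connected and non-Hamiltonian, since deleting $x,y$ leaves three components. Every non-hub vertex has degree exactly $d$. For $v_0\in A_1$, every longest path from $v_0$ runs through $A_1$, a hub, a second clique, the other hub and a third clique. Hence every terminal cycle has $2d$ vertices, and we may take $u_m=x$ and $V(H)=\{x,y\}\cup A_2\cup A_3$. Then $W(H)=A_2\cup A_3$, because no Hamilton path of $H$ from $x$ can end at $y$. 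All hypotheses of the lemma hold, yet $|W(H)|=2d-2<2d$ and $W(H)$ has independence number $2<d$. So no admissible $Y\subseteq W(H)$ exists. Moreover, the rotation endpoints generated from $R_{u_1}$ contain the whole clique $A_3\ni u_1$, so they are far from independent.

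\paragraph{Why the rotation argument cannot work.} Rotations inside $H$ only produce Hamilton paths of $H$ starting at $u_m$. Prefixed by $v_0\cdots v_{s-1}$, these are again longest paths from $v_0$. Their ends have no neighbours outside $H$, so their terminal cycles lie in $H$ and have at most $m$ vertices. Thus neither extremal choice (longest $P$, then longest terminal cycle) ever constrains edges inside $H$. The double-rotation contradiction you hope for (a longer path from $v_0$ or a longer terminal cycle) therefore does not exist.

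\paragraph{Missing use of $2$-connectivity.} Your argument never uses that $G$ is $2$-connected, and this hypothesis is essential. Attach a pendant path at one vertex of $K_{d+1}$ and start $v_0$ at its far end. This graph satisfies every requirement of the setup and of the lemma except $2$-connectivity: $H=K_{d+1}$ and $W(H)$ is a $d$-clique of degree-$d$ vertices. Yet no cycle has more than $(d+1)(d-2)/2<d(d-2)$ chords. Since your reasoning applies verbatim to this graph, it cannot be correct.

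\paragraph{What is missing.} A proof needs a real use of $2$-connectivity together with a finer chord count than Lemma~\ref{lem:ash-counting} restricted to $W(H)$. One option is to bound the number of chords spanned by $W(H)$; in the first example, $C$ has $(d-2)(d+1)$ chords incident with $W(H)$. Another is to bring in non-accessible vertices such as $x,y$, whose degrees in $H$ the hypotheses do not control.
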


To apply the terminal-cycle lemma when one vertex is allowed to be exceptional, we need the following lemma.

\begin{lemma}\label{lem:rooted-ash-verification} Let $G$ be a $2$-connected non-Hamiltonian graph, and let $z\in V(G)$. Let $P,C,H,W(H)$ be defined by the terminal-cycle setup above, with initial vertex $v_0=z$. Then 
(i) $u_1,u_{m-1}\in W(H)$; (ii) $W(H)\cap\{z\}=\emptyset$; (iii) $d_H(w)=d_G(w)$ for all $w\in W(H)$.
\end{lemma}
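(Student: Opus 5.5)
For each part of Lemma~\ref{lem:rooted-ash-verification} I would work directly from the two extremal choices in the terminal-cycle setup: $P=v_0v_1\cdots v_\ell$ is a longest path starting at $v_0=z$, and among such paths its terminal cycle is as long as possible. Recall $u_j=v_{s+j-1}$ reversed, i.e.\ $u_m=v_s$, $u_{m-1}=v_{s+1}$, \dots, $u_1=v_\ell$, so $m=\ell-s+1$ and $V(H)=\{v_s,\dots,v_\ell\}$. The terminal cycle has length $m$.

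For (i), I will exhibit the Hamilton paths explicitly. The path $u_mu_{m-1}\cdots u_1$ is the segment $v_s\cdots v_\ell$ of $P$; it is a Hamilton path of $H$ from $u_m$ to $u_1$, so $u_1\in W(H)$. For $u_{m-1}$, use the edge $v_sv_\ell$, which exists by the choice of $s$. The path $v_s v_\ell v_{\ell-1}\cdots v_{s+1}$, that is, $u_mu_1u_2\cdots u_{m-1}$, is a Hamilton path of $H$ from $u_m$ to $u_{m-1}$. Since $s\le \ell-2$, we have $u_{m-1}\ne u_m$, so $u_{m-1}\in W(H)$.

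For (ii), the argument is purely definitional. If $z\notin V(H)$ there is nothing to show. Otherwise $z=v_0=v_j$ for some $j\ge s$, which forces $s=0$ and $z=u_m$. By definition $W(H)\subseteq V(H)\setminus\{u_m\}$, so $z\notin W(H)$.

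For (iii), which is the only substantive step, fix $w\in W(H)$ and a Hamilton path $R$ of $H$ from $u_m=v_s$ to $w$. Let $P'=v_0v_1\cdots v_{s-1}R$. This is a path starting at $z$ with $\ell+1$ vertices, so $P'$ is also a longest path starting at $z$. By maximality, every neighbor of its endpoint $w$ lies on $P'$, hence in $V(P)$.

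It remains to exclude neighbors $v_i$ with $i<s$. Suppose $wv_i\in E(G)$ for some $i<s$. The index defining the terminal cycle of $P'$ is then at most $i$. So that terminal cycle has at least $(\ell+1)-i\ge \ell-s+2=m+1$ vertices. This is longer than the terminal cycle of $P$, contradicting the second extremal choice of $P$. Hence $N_G(w)\subseteq V(H)$, and since $H$ is induced, $d_H(w)=d_G(w)$.

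The main point to check carefully is that this tie-break is applied among all longest paths from $z$. That is exactly how the setup is phrased, so $P'$ is a legitimate competitor.
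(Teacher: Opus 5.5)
Your proof is correct. It rests on the same two extremal choices as the paper, but your argument for (iii) is shorter than the paper's.

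The paper first disposes of neighbours of $w$ off $P$ by a longer-path argument. It then handles a neighbour $v_i$ with $i<s$ by splitting into $i<s-1$ and $i=s-1$, building a new rerouted longest path in each case:
\begin{itemize}
\item for $i<s-1$, the path $v_0\cdots v_i\,wP_0^{-1}u_m\,v_{s-1}\cdots v_{i+1}$, ending at $v_{i+1}$ and closed by $v_{i+1}v_i$;
\item for $i=s-1$, the path $v_0\cdots v_{s-1}\,wP_0^{-1}u_m$, ending at $u_m$ and closed by $u_mv_{s-1}$.
\end{itemize}
Here $P_0$ is the Hamilton path of $H$ from $u_m$ to $w$. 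Each of these yields a cycle containing $V(H)$ and at least one further vertex.

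You instead keep the single competitor $P'=v_0\cdots v_{s-1}R$, which ends at $w$. You note that the index defining its terminal cycle is at most $i$, so that cycle has at least $\ell+1-i\ge m+1$ vertices. This removes the case split, and the same $P'$ also gives $N_G(w)\subseteq V(P)$ by maximality.

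For (i) and (ii) the paper only says they hold by definition. You supply the missing justification: the explicit Hamilton paths $u_mu_{m-1}\cdots u_1$ and $u_mu_1u_2\cdots u_{m-1}$ (using the edge $v_sv_\ell$), and the observation that $z\in V(H)$ forces $s=0$ and $z=u_m$.

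One small notational slip: the correct formula is $u_j=v_{\ell-j+1}$, not ``$v_{s+j-1}$ reversed''. The explicit correspondence you write out right after it is correct, so nothing downstream is affected.
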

\begin{proof}
By the definition of $W(H)$, assertions (i) and (ii) hold.  It remains to prove (iii).
Let $w\in W(H)$ and let $P_0$ be a Hamilton path of $H$ from $u_m$ to $w$.  We show that $w$ has no neighbor in $V(G)\setminus V(H)$.

Suppose, to the contrary, that $w$ has a neighbor $w'\notin V(H)$.  
If $w'\notin\{v_0,v_1,\dots,v_{s-1}\}$, then $v_0v_1\cdots v_{s-1}u_mP_0ww'$ is a path starting at $z=v_0$ that is strictly longer than $P$, contradicting the maximality of $P$.  Hence every neighbor of $w$ outside $H$ is of the form $v_i$ for some $i<s$.
Choose such a neighbor $v_i$ and let $wP_0^{-1}u_m$ denote the path $P_0$ traversed backwards from $w$ to $u_m$.

If $i<s-1$, then $v_0v_1\cdots v_i\, wP_0^{-1}u_m\, v_{s-1}v_{s-2}\cdots v_{i+1}$ is a path starting at $z=v_0$.  Its terminal segment together with the edge $v_{i+1}v_i$ yields the cycle $v_i\, wP_0^{-1}u_m\, v_{s-1}v_{s-2}\cdots v_{i+1}v_i$.  This cycle is strictly longer than $C$, a contradiction.

If $i=s-1$, then $v_0v_1\cdots v_{s-1}\, wP_0^{-1}u_m$ is a path starting at $z=v_0$.  Its terminal segment together with the edge $u_mv_{s-1}$ yields the cycle $v_{s-1}\, wP_0^{-1}u_m v_{s-1}$, which contains all vertices of $H$ and also $v_{s-1}$.  This cycle is strictly longer than $C$, a contradiction.

Thus, $w$ has no neighbor in $V(G)\setminus V(H)$.  Consequently, $d_H(w)=d_G(w)$ for every $w\in W(H)$.  This proves (iii).
\end{proof}

\begin{proof}[Proof of Theorem \ref{thm:one-exception-ash}.]
Suppose that $G$ is not Hamiltonian.
Let $P,C,H,W(H)$ be obtained from the terminal-cycle setup with $v_0=z$.
By Lemma~\ref{lem:rooted-ash-verification}, we have $u_1,u_{m-1}\in W(H)$, $W(H)\cap\{z\}=\emptyset$ and $d_H(a)=d_G(a)$ for all $a\in W(H)$.
Hence, $d_H(w)=d_G(w)\ge d$ for all $w\in W(H)$.
By Lemma~\ref{lem:ash-terminal}, $G$ contains a cycle with at least $d(d-2)$ chords.

It remains to consider the Hamiltonian case. Let $C$ be a Hamilton cycle of $G$.
Put $Y=V(G)\setminus\{z\}$. Then $|Y|\ge 2d$, and
$d_G(y)\ge d$ for every $y\in Y$. Hence, Lemma~\ref{lem:ash-counting}
implies that $C$ has at least $d(d-2)$ chords.
\end{proof}

We recall the Bondy--Chv\'{a}tal closure theorem. Let $G$ be an $n$-vertex graph. If $u$ and $v$ are nonadjacent vertices with $d_G(u)+d_G(v)\ge n$, adding the edge $uv$ is called a \textit{closure operation}. The \emph{closure} $\operatorname{cl}(G)$ (or $\operatorname{cl}_n(G)$) is obtained by repeatedly performing such operations until no eligible pair remains; the result is independent of the order of operations.

\begin{theorem}[Bondy--Chv\'{a}tal \cite{BondyChvatal1976}]\label{thm:closure}
Let $G$ be an $n$-vertex graph, and let $u,v\in V(G)$ be nonadjacent vertices with $d_G(u)+d_G(v)\ge n$. (i) Then $G$ is Hamiltonian if and only if $G+uv$ is Hamiltonian. (ii) $G$ is Hamiltonian if and only if its Bondy--Chvátal closure $\operatorname{cl}(G)$ is Hamiltonian. (iii) If $\operatorname{cl}(G)$ is complete, then $G$ is Hamiltonian.
\end{theorem}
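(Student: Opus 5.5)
The plan is to prove part (i) by the classical Ore-type crossing argument, and then to obtain (ii) and (iii) from (i) by induction along the closure sequence.

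For (i), one direction is trivial: a Hamilton cycle of $G$ is also a Hamilton cycle of $G+uv$. For the converse, let $C'$ be a Hamilton cycle of $G+uv$. If $C'$ avoids the edge $uv$, then it already lies in $G$ and we are done. Otherwise, deleting $uv$ from $C'$ gives a Hamilton path $x_1x_2\cdots x_n$ of $G$ with $x_1=u$ and $x_n=v$. Define
\[
S=\{i\in\{1,\ldots,n-1\}: ux_{i+1}\in E(G)\},\qquad T=\{i\in\{1,\ldots,n-1\}: x_iv\in E(G)\}.
\]
All neighbours of $u$ and of $v$ lie on the path, and $u,v$ are nonadjacent in $G$. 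Hence $|S|=d_G(u)$ and $|T|=d_G(v)$, and both $S$ and $T$ are subsets of an $(n-1)$-element set. Since $|S|+|T|\ge n$, there is an index $i\in S\cap T$. Moreover $i\neq 1$, because $x_1v=uv\notin E(G)$, and $i\neq n-1$, because $ux_n=uv\notin E(G)$. The cycle
\[
x_1x_{i+1}x_{i+2}\cdots x_n x_i x_{i-1}\cdots x_1
\]
is then a Hamilton cycle of $G$.

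For (ii), write the closure as a sequence $G=G_0,G_1,\ldots,G_r=\operatorname{cl}(G)$. At each step, $G_{j+1}=G_j+u_jv_j$ with $u_jv_j\notin E(G_j)$ and $d_{G_j}(u_j)+d_{G_j}(v_j)\ge n$. Part (i), applied to the $n$-vertex graph $G_j$, gives that $G_j$ is Hamiltonian if and only if $G_{j+1}$ is. Chaining these equivalences proves (ii). This argument works for any fixed sequence of closure operations ending at a graph with no eligible pair, so we do not need the fact that the closure is independent of the order of operations. For (iii), $K_n$ is Hamiltonian when $n\ge 3$, which is the implicit range whenever Hamiltonicity is discussed, so (ii) gives the conclusion.

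The only delicate step is the index bookkeeping in (i). The point is to shift $S$ by one so that $S$ and $T$ sit in the same $(n-1)$-element range, and to check that the common index $i$ avoids the endpoints, so that the displayed closed walk is a genuine cycle through all $n$ vertices. Everything else is routine induction.
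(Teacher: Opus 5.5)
Your proof is correct. The shifted index sets $S$ and $T$ force a common index with $2\le i\le n-2$ for (i), (ii) and (iii) follow by chaining along any closure sequence, and you rightly flag the implicit requirement $n\ge 3$ in (iii). The paper does not prove this theorem itself but only cites Bondy and Chv\'atal, and your argument is the standard proof from that source.
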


\subsection{Main results}
We next analyze a minimal counterexample.  The first lemma treats a $2$-connected subgraph with two exceptional vertices; later this subgraph will appear as the unique internal block between two end-blocks.

\begin{lemma}
\label{lem:two-exceptional}
Let $G$ be 2-connected with $V(G)=Q\cup\{x,y\}$, where $9\le |Q|=q\le13$.  Suppose $d_G(v)\geq 8$ for $v\in Q$, and put $s=d_G(x)+d_G(y)$.  If either $q\ge11$, or $q=10$ and $s\ge6$, or $q=9$ and $s\ge12$, then $G$ contains a cycle with at least $31$ chords.
\end{lemma}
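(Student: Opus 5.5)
Set $n=q+2$, so $11\le n\le 15$. The plan splits according to whether $G$ is Hamiltonian, following the scheme of Theorem~\ref{thm:one-exception-ash} but with two exceptional vertices $x,y$. Throughout, $d=8$ is the target, since $d(d-2)=48\ge 31$. In many subcases we will only need the weaker count of $31$.

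\emph{Non-Hamiltonian case.} Run the terminal-cycle setup with initial vertex $v_0=x$. By Lemma~\ref{lem:rooted-ash-verification}, $x\notin W(H)$, $u_1,u_{m-1}\in W(H)$, and $d_H(w)=d_G(w)$ for every $w\in W(H)$. If also $y\notin W(H)$, every accessible vertex lies in $Q$ and has degree at least $8$, so Lemma~\ref{lem:ash-terminal} gives $48$ chords. If $y\in W(H)$, restart the setup from $v_0=y$. The hope is that in one of the two runs the exceptional vertex is not accessible. Failing that, we use the structure: $y$ accessible means all neighbours of $y$ lie in $H$, and $H$ has at least $9$ vertices each of degree $\ge 8$ inside $H$. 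We then apply the chord-counting in Ash's proof directly to the terminal cycle $C$ (of length $m$), discarding the contribution of $y$. Each $w\in W(H)\cap Q$ contributes at least $d_H(w)-2\ge 6$ chords to $C$. Since $W(H)$ is large (the rotation argument gives many accessible vertices), at most one of them is exceptional, and a chord is counted at most twice, this should give at least $\tfrac12\cdot 6\cdot 11>31$ chords once $|W(H)\cap Q|\ge 11$. Making this precise is somewhat delicate. An alternative is to avoid the non-Hamiltonian case altogether by showing that $G$ is Hamiltonian under the hypotheses, using Theorem~\ref{thm:closure}.

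\emph{Hamiltonian case / closure.} With $n\le 15$, any two nonadjacent vertices of $Q$ have degree sum at least $16\ge n$. Hence in $\operatorname{cl}(G)$ the set $Q$ becomes a clique. Vertices of $Q$ then have degree at least $q-1\ge 8$ in the closure. As soon as $d(x)+(q-1)\ge n$, i.e.\ $d(x)\ge 3$, $x$ becomes joined to all of $Q$, and likewise for $y$. With $2$-connectivity ensuring $d(x),d(y)\ge 2$, the closure is complete or nearly complete, so $G$ is Hamiltonian in essentially all cases. In the residual case where $x$ or $y$ has degree $2$, we splice the two neighbours of that vertex into a Hamilton path of $Q$ by hand.

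\emph{Counting on a Hamilton cycle $C$.} Every vertex $v$ of $C$ contributes $d_G(v)-2$ chord-ends, so
\[
\operatorname{ch}(C)=\tfrac12\Bigl(\sum_{v}d_G(v)-2n\Bigr)\ \ge\ \tfrac12\bigl(8q+s-2(q+2)\bigr)=3q+\tfrac{s}{2}-2 .
\]
This is at least $31$ exactly when $6q+s\ge 66$, i.e.\ $q\ge 11$ (using $s\ge 0$), or $q=10$ and $s\ge 6$, or $q=9$ and $s\ge 12$. These are precisely the three hypotheses, which strongly suggests this Hamiltonian count is the intended core and that Hamiltonicity is the real content. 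The main obstacle is therefore establishing Hamiltonicity robustly, in particular when $d(x)$ or $d(y)$ equals $2$ and the closure argument stalls. There I would first try the closure computation above, and otherwise fall back on the rooted terminal-cycle argument.
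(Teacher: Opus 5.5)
Your Hamiltonian count $\operatorname{ch}(C)\ge 3q+\tfrac{s}{2}-2$ is correct and matches the paper's Case~1. So is your closure step: once $Q$ is a clique, $d(x)\ge 3$ forces $x$ to be joined to all of $Q$.

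\textbf{The gap.} The claim that $G$ is Hamiltonian ``in essentially all cases'', with the degree-$2$ cases spliced ``by hand'', is false in one configuration. If you carry the closure through the degree-$2$ cases, it becomes complete in every case except one: $xy\notin E(G)$ and $N_G(x)=N_G(y)=\{a,b\}$ with $a,b\in Q$. There $G$ is genuinely non-Hamiltonian, since a Hamilton cycle would have to contain $xa,xb,ya,yb$ and hence be the $4$-cycle $axbya$. You can splice one of $x,y$ into an $a$--$b$ Hamilton path of $Q$, but not both.

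\textbf{The fallback also fails here.} Take $G[Q]$ complete and $q=11$; the hypotheses hold because $q\ge 11$.
\begin{itemize}
\item Starting from $x$, the path $x\,a\,w_1\cdots w_{q-2}\,b\,y$ is a longest path. The selected terminal cycle is a Hamilton cycle of $H=G-x$ with $u_m\in\{a,b\}$.
\item $y$ is accessible, for example via $a\,w_1\cdots w_{q-2}\,b\,y$. Symmetrically, starting from $y$ makes $x$ accessible. So Lemma~\ref{lem:ash-terminal} never applies.
\item Here $W(H)=(Q\setminus\{a,b\})\cup\{y\}$. So your claim $|W(H)\cap Q|\ge 11$ is false, and your replacement bound $\tfrac12\cdot 6\cdot|W(H)\cap Q|=3q-6$ gives only $27$ (and only $30$ for $q=12$).
\end{itemize}

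\textbf{The missing idea.} Isolate this configuration and handle it directly.
\begin{itemize}
\item It forces $s=4$, so under the lemma's hypotheses only $q\ge 11$ remains.
\item Delete $y$. In $G-y$, any two nonadjacent $u,v\in Q$ satisfy $d_{G-y}(u)+d_{G-y}(v)\ge 14\ge q+1=|V(G-y)|$; this is where $q\le 13$ is used.
\item So the closure makes $Q$ a clique. An $a$--$b$ Hamilton path of $Q$ closed through $x$ then shows, by Theorem~\ref{thm:closure}, that $G-y$ is Hamiltonian.
\item That Hamilton cycle has at least $|E(G)|-2-(q+1)\ge (8q+4)/2-q-3=3q-1\ge 32$ chords.
\end{itemize}
This is exactly how the paper finishes. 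In its Case~2 it shows, by explicit Hamilton-cycle constructions in the closure, that non-Hamiltonicity forces $xy\notin E$ and $N(x)=N(y)=\{a,b\}$, and then it works in $G-y$.
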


\begin{proof}
Since for any two nonadjacent vertices $u,v\in Q$, we have $d_G(u)+d_G(v)\ge 16\ge q+2=|V(G)|$, we may repeatedly apply the closure operation to nonadjacent pairs in $Q$ to obtain the graph $H$.  
We distinguish two cases depending on whether $H$ is Hamiltonian.

\medskip
\noindent
\textbf{Case 1: $H$ is Hamiltonian.} 
 \medskip

By Theorem~\ref{thm:closure}, $G$ is also Hamiltonian. Let $C$ be a Hamilton cycle of $G$.
Since every edge of $G$ not in $C$ is a chord of $C$,
$\operatorname{ch}_G(C)=|E(G)|-|V(G)| \ge {(8q+s)}/{2}-(q+2)=3q-2+{s}/{2}.$
If $q=9$ and $s\ge12$, then $3q-2+{s}/{2}\ge 31$.
If $q=10$ and $s\ge6$, then $3q-2+{s}/{2}\ge 31$.
If $q\ge 11$, then $s\ge 4$ as $G$ is $2$-connected, which implies that $3q-2+{s}/{2}\ge 33$.

\medskip
\noindent
\textbf{Case 2: $H$ is not Hamiltonian.} 
 \medskip

We first prove that $q\geq 11$ by using the following claim.
\begin{claim}
    $xy\notin E(H)$ and $N_H(x)\cap Q=N_H(y)\cap Q=\{a,b\}$ for distinct $a,b\in Q$.
\end{claim}

\begin{proof}
By construction, all nonadjacent pairs in $Q$ are added, so $H[Q]\cong K_q$. By Theorem~\ref{thm:closure}, $G$ is Hamiltonian if and only if $H$ is Hamiltonian.
    Set $X=N_H(x)\cap Q$, $Y=N_H(y)\cap Q$.
Since $H$ is 2-connected, $X,Y\neq\varnothing$. Moreover, there are distinct $a\in X$, $b\in Y$; otherwise $X=Y=\{a\}$ and removing $a$ separates $\{x,y\}$, a contradiction.

If $xy\in E(H)$, a Hamilton path in $H[Q]$ from $b$ to $a$ gives the Hamilton cycle, a contradiction.  Hence $xy\notin E(H)$.

Since $H$ is $2$-connected and $xy\notin E(H)$, both $|X|$ and $|Y|$ are at least 2.
If $X\cap Y=\varnothing$, take distinct $a,b\in X$ and $c,d\in Y$.  Since $H[Q\setminus\{b,c\}]$ is complete, there exists a Hamilton path $P_1$ from $d$ to $a$, yielding the cycle $a x b c y dP_1a$, a contradiction. Thus $X\cap Y\neq\varnothing$.
Pick $b\in X\cap Y$.  If there exist $a\in X\setminus\{b\}$ and $c\in Y\setminus\{b\}$ with $a\neq c$, then $H[Q\setminus\{b\}]$ has a Hamilton path $P_2$ from  $c$ to $a$, giving the cycle $a x b y c P_2a$, a contradiction.  Hence $X\setminus\{b\}=Y\setminus\{b\}$ and this set has size at most $1$.  Because $|X|,|Y|\ge2$, we must have $X\setminus\{b\}=Y\setminus\{b\}=\{a\}$ for some $a\in Q\setminus\{b\}$.  Thus $X=Y=\{a,b\}$.
\end{proof}

Since the closure operation only adds edges in $Q$, we have $N_G(x)=N_G(y)=\{a,b\}$. Then $d_G(x)=d_G(y)=2$ and $s=4$.  Thus, $q\ge11$.

We next prove that $G-y$ is Hamiltonian. 
Since for any two nonadjacent vertices $u,v\in Q$, we have $d_{G-y}(u)+d_{G-y}(v)\ge14\ge q+1=\lvert V(G-y)\rvert$, we may repeatedly apply the closure operation to nonadjacent pairs in $Q$.
In the resulting graph, $Q$ is a clique and $x$ is adjacent to $a,b\in Q$.  A Hamilton path in $Q$ from $a$ to $b$ together with $xa$ and $xb$ gives a Hamilton cycle; thus $G-y$ is Hamiltonian.

Let $C$ be a Hamilton cycle of $G-y$. 
Since $s=4$ and $d_G(y)=2$, $|E(G-y)|=|E(G)|-d_G(y)\geq (8q+4)/2-2\ge 4q$. Hence,
$\operatorname{ch}_{G-y}(C)=|E(G-y)|-|V(G-y)|\ge 4q-(q+1)=3q-1\ge 32,$
as $q\ge11$. Thus, $C$ has at least $32$ chords in $G$, yielding a cycle with at least $31$ chords.
\end{proof}

The next lemma shows that a small end-block in a counterexample is very restricted.

\begin{lemma}
\label{lem:small-endblock}
Let $G$ be a $31$-vertex graph with $\delta(G)\ge8$ and no cycle with at least $31$ chords.  Let $B^*$ be an end-block and $x$ be the cut vertex of $B^*$.  If $q(B^*)\le15$, then $q(B^*)\in\{8,9,10\}$.  Moreover, if $q(B^*)=8$, then $B^*\cong K_9$ and $d_{B^*}(x)=8$; if $q(B^*)=9$, then $2\le d_{B^*}(x)\le8$; if $q(B^*)=10$, then $d_{B^*}(x)=2$, $|E(B^*)|=41$, and every vertex of $Q(B^*)$ has degree exactly $8$ in $B^*$.
\end{lemma}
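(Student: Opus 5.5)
Write $q=q(B^*)$, so $|V(B^*)|=q+1$ and $d_{B^*}(v)=d_G(v)\ge 8$ for every $v\in Q(B^*)$ by Fact~\ref{fact:private}. This gives $q+1\ge 9$, i.e.\ $q\ge 8$. If $q=8$, every private vertex has degree $8$ in a $9$-vertex graph, so it is adjacent to all other vertices of $B^*$. Hence $B^*\cong K_9$ and $d_{B^*}(x)=8$.

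For $q\ge 9$ the block has at least two vertices of degree $\ge 8$, so it is not a bridge block and is therefore $2$-connected. Suppose first that $q\ge 15$ fails to be excluded; that is, take $11\le q\le 15$. Then $|V(B^*)|=q+1\ge 2\cdot 8+1$ requires $q\ge 16$, which we do not have, so Theorem~\ref{thm:one-exception-ash} cannot be used directly. Instead, use Hamiltonicity. For nonadjacent $u,v\in Q(B^*)$ we have $d(u)+d(v)\ge 16\ge q+1$. Closing over $Q(B^*)$ makes $Q(B^*)$ a clique, and $x$ has at least two neighbours in it by $2$-connectivity, so the closure is Hamiltonian. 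By Theorem~\ref{thm:closure}, $B^*$ is then Hamiltonian. A Hamilton cycle $C$ has
\[
\operatorname{ch}(C)=|E(B^*)|-(q+1)\ge \frac{8q+d(x)}{2}-q-1\ge 3q-1+\frac{d(x)}{2}-\frac{d(x)}{2}\cdot 0 ,
\]
which is at least $3q$ using $d(x)\ge 2$. This is $\ge 31$ once $q\ge 11$, a contradiction. This settles $q\in\{8,9,10\}$.

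For $q=9$, $2$-connectivity gives $d_{B^*}(x)\ge 2$, and trivially $d_{B^*}(x)\le 9$. If $d_{B^*}(x)=9$, then the edge count is at least $(72+9)/2$, so at least $41$ edges, and a Hamilton cycle (obtained as above) has at least $41-10=31$ chords, which is impossible. Hence $d_{B^*}(x)\le 8$.

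For $q=10$, the Hamilton cycle has exactly $|E(B^*)|-11$ chords, so $|E(B^*)|\le 41$. On the other hand, $2|E(B^*)|=\sum_{v\in Q}d(v)+d(x)\ge 80+2$, so $|E(B^*)|\ge 41$. Equality forces $|E(B^*)|=41$, $d(x)=2$, and every private vertex to have degree exactly $8$.

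The only delicate point is confirming Hamiltonicity through the closure. The pair sum $16\ge q+1$ holds for all $q\le 15$, which is exactly why the hypothesis $q\le 15$ appears. After closure, the argument needs $x$ to have two distinct neighbours in $Q(B^*)$, and this follows from $2$-connectivity of $B^*$.
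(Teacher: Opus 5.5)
Your proof is correct and follows the paper's argument: the closure over $Q(B^*)$ (valid because $16\ge q+1$ when $q\le 15$) makes $B^*$ Hamiltonian, and comparing $|E(B^*)|-|V(B^*)|\le 30$ with $2|E(B^*)|\ge 8q+d_{B^*}(x)$ gives every conclusion; the paper just packages your separate cases into the single inequality $6q+d_{B^*}(x)\le 62$. The stray term $-\frac{d(x)}{2}\cdot 0$ in your display is harmless but should be deleted, and the phrase ``$q\ge 15$ fails to be excluded'' should simply say ``suppose $11\le q\le 15$''.
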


\begin{proof}
Let $x$ be the cut vertex of $B^*$.
By Fact~\ref{fact:private}, every $v\in Q(B^*)$ has all its neighbors in $B^*$, so $d_{B^*}(v)=d_G(v)\ge 8$. Moreover, $q(B^*)\ge 8$. Now assume $q(B^*)\leq 15$. Then we prove $q(B^*)\in\{8,9,10\}$.

Since every private vertex of $B^*$ has degree at least $8$, $B^*$ cannot be a bridge block. Hence $B^*$ is $2$-connected. Then $x$ has two distinct neighbors $a,b\in Q(B^*)$.
Since for any two nonadjacent vertices $u,v\in Q(B^*)$, we have $d_{B^*}(u)+d_{B^*}(v)\ge 16\ge q(B^*)+1=|V(B^*)|$, we may repeatedly apply the closure operation to nonadjacent pairs in $Q(B^*)$ to obtain the graph $H$.
By Theorem \ref{thm:closure},
$H[Q(B^*)]$ is a clique.
Thus, there exists 
a Hamilton path $P$ in $H[Q(B^*)]$ from $a$ to $b$. 
Then $xaPbx$ is a Hamilton cycle of $H$. Hence, by Theorem~\ref{thm:closure}, $B^*$ is Hamiltonian.

Let $C$ be a Hamilton cycle of $B^*$.  Since $G$ has no cycle with $31$ chords,
  $|E(B^*)|-|V(B^*)|\le30.$
On the other hand,
  $|E(B^*)|\ge \frac{8q(B^*)+d_{B^*}(x)}{2}.$
Thus,
\[
6q(B^*)+d_{B^*}(x)\le 62.
\]
Since $d_{B^*}(x)\ge2$, we have $q(B^*)\le10$.  Together with $q(B^*)\ge8$, this gives
$q(B^*)\in\{8,9,10\}.$

If $q(B^*)=8$, then $d_{B^*}(v)\ge8=|V(B^*)|-1$ for every $v\in Q(B^*)$. Hence $B^*\cong K_9$ and $d_{B^*}(x)=8$.

If $q(B^*)=9$, then $54+d_{B^*}(x)\le62$, so $d_{B^*}(x)\le8$. Since $B^*$ is $2$-connected, $2\le d_{B^*}(x)\le8$.

If $q(B^*)=10$, then $60+d_{B^*}(x)\le62$.  As $d_{B^*}(x)\ge2$, we get $d_{B^*}(x)=2$. Moreover,
$|E(B^*)|\ge \frac{8\cdot10+2}{2}=41
$ and $
|E(B^*)|\le |V(B^*)|+30=41.$
Thus $|E(B^*)|=41$. Hence
$\sum_{v\in Q(B^*)} d_{B^*}(v) = 2|E(B^*)|-d_{B^*}(x) = 80.$
Since every vertex of $Q(B^*)$ has degree at least $8$ in $B^*$, it follows that each vertex of $Q(B^*)$ has degree exactly $8$.
\end{proof}

The next proposition handles the complementary case: a large block with only one exceptional vertex already forces many chords.

\begin{proposition}\label{prop:large-terminal}
Let $G$ be 2-connected and let $x\in V(G)$.  If $|V(G)|\ge15$ and $d_G(v)\ge8$ for every $v\in V(G)\setminus \{x\}$, then $G$ contains a cycle with at least $31$ chords.
\end{proposition}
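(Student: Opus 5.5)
This is Theorem~\ref{thm:one-exception-ash} with $d=8$ in the case $|V(G)|\ge 17=2d+1$, which gives $d(d-2)=48\ge 31$ chords. It remains to treat $|V(G)|\in\{15,16\}$. There the target $31$ is much smaller than $48$, so a direct edge count should suffice once a Hamilton cycle is available. I would split according to whether $G$ is Hamiltonian, mirroring the proof of Theorem~\ref{thm:one-exception-ash}.

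\emph{Non-Hamiltonian case (any order $\ge 15$).} Run the terminal-cycle setup with $v_0=x$. By Lemma~\ref{lem:rooted-ash-verification}, $u_1,u_{m-1}\in W(H)$, $x\notin W(H)$, and $d_H(w)=d_G(w)\ge 8$ for every $w\in W(H)$. Then Lemma~\ref{lem:ash-terminal} with $d=8$ yields a cycle with at least $48\ge 31$ chords. This case uses no order hypothesis beyond what the lemmas need.

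\emph{Hamiltonian case with $n=|V(G)|\in\{15,16\}$.} Let $C$ be a Hamilton cycle. Every edge off $C$ is a chord, so
\[
\operatorname{ch}_G(C)=|E(G)|-n\ge \frac{8(n-1)+d_G(x)}{2}-n=3n-4+\frac{d_G(x)}{2}.
\]
Since $G$ is $2$-connected, $d_G(x)\ge 2$, so $\operatorname{ch}_G(C)\ge 3n-3\ge 42\ge 31$. Alternatively, for $|V(G)|\ge 16$ one can take $Y=V(G)\setminus\{x\}$ in Lemma~\ref{lem:ash-counting}, but the edge count covers $n=15$ as well and is simpler.

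The main obstacle I expect is only bookkeeping: making sure that Lemma~\ref{lem:ash-terminal} applies without a lower bound on the order, since the order hypothesis in Theorem~\ref{thm:one-exception-ash} was used only in the Hamiltonian branch. If there were doubt about this, I would instead use the closure argument from Lemma~\ref{lem:small-endblock}. For $n\le 16$, two vertices $u,v\ne x$ satisfy $d(u)+d(v)\ge 16\ge n$, so the closure makes $V(G)\setminus\{x\}$ a clique. Since $x$ has two neighbours, the closure is Hamiltonian, and hence so is $G$ by Theorem~\ref{thm:closure}. This reduces orders $15$ and $16$ entirely to the Hamiltonian count above, while orders $\ge 17$ follow from Theorem~\ref{thm:one-exception-ash}.
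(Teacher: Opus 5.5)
Your proof is correct, but it takes a genuinely different and much shorter route than the paper.

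The paper deletes $x$ and works in $G-x$, where the degree bound drops to $7$. If $G-x$ is $2$-connected, it applies Ash's theorem with $d=7$. Otherwise it analyses end-blocks of $G-x$ by private size: Theorem~\ref{thm:one-exception-ash} with $d=7$ for at least $14$ private vertices, closure plus an edge count for exactly $13$, and, when the end-blocks are small, a cycle through $x$ threading Hamilton paths of two end-blocks.

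You instead keep $x$ as the exceptional vertex and apply Theorem~\ref{thm:one-exception-ash} with $d=8$ to $G$ itself, which removes the block analysis entirely. Your handling of $n\in\{15,16\}$ is sound. Since $16\ge n$, the closure turns $V(G)\setminus\{x\}$ into a clique, so $G$ is Hamiltonian, and a Hamilton cycle has at least $3n-4+d_G(x)/2\ge 3n-3\ge 42$ chords. Your worry about Lemma~\ref{lem:ash-terminal} is also unfounded: as stated it carries no order hypothesis, and your closure argument makes that branch moot anyway.

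Both arguments use the same ingredients: Theorem~\ref{thm:one-exception-ash} (hence Ash's terminal-cycle lemma) and the Bondy--Chv\'atal closure. The paper's detour through $G-x$ therefore buys nothing here, while yours gives stronger bounds with less work.

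In fact even your case split is unnecessary. Theorem~\ref{thm:one-exception-ash} with $d=7$ applies directly to $G$ for every $n\ge 15=2\cdot 7+1$, because $d_G(v)\ge 8\ge 7$ for $v\neq x$. That gives at least $35\ge 31$ chords in one line.

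One small slip is in your aside. Lemma~\ref{lem:ash-counting} with $d=8$ and $Y=V(G)\setminus\{x\}$ needs $|Y|\ge 16$, i.e.\ $n\ge 17$, not $n\ge 16$; with $d=7$ it works for all $n\ge 15$. Since you do not use that remark, the proof is unaffected.
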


\begin{proof}
Let $H=G-x$.  Since $G$ is 2-connected, $H$ is connected.  Moreover, for
all $v\in V(H)$,
$d_H(v)\ge d_G(v)-1\ge 7.$
If $H$ is 2-connected, there exists a cycle with at
least $7(7-2)=35$ chords by Theorem \ref{thm:Ash1985}, and we are done.

Suppose $H$ is not $2$-connected.  Let $B^*$ be an end-block of $H$ with cut vertex $y$, and set $Q(B^*) = V(B^*) \setminus \{y\}$.
For every $v \in Q(B^*)$, we have $d_{B^*}(v) = d_H(v) \ge 7$.  Hence $B^*$ is not a single edge, and therefore $B^*$ is $2$-connected.  Moreover, $|Q(B^*)|\ge 7$.
Finally, $x$ has a neighbor in $Q(B^*)$; otherwise $y$ would separate $Q(B^*)$ from $x$ in $G$.

If $|Q(B^*)|\ge 14$, then $|V(B^*)|\ge 2\cdot7+1$.  Apply Theorem~\ref{thm:one-exception-ash} to $B^*$ with exceptional vertex $y$ and $d=7$. Then $B^*$ contains a cycle with at least $35$ chords.

If $|Q(B^*)|=13$, then for any nonadjacent $u,v\in Q(B^*)$,
$d_{B^*}(u)+d_{B^*}(v)\ge14=|V(B^*)|.$
Thus, we may repeatedly apply the closure operation to nonadjacent pairs in $Q(B^*)$, making it a clique.  Since $B^*$ is $2$-connected, $y$ has two neighbors in $Q(B^*)$.  Hence, the closure contains a Hamilton cycle through $y$ and $Q(B^*)$, and by Theorem~\ref{thm:closure}, $B^*$ is Hamiltonian.
Let $C$ be a Hamilton cycle of $B^*$.  Since $d_{B^*}(v)\ge7$ for all $v\in Q(B^*)$ and $d_{B^*}(y)\ge2$,
$2|E(B^*)|\ge 7|Q(B^*)|+2=93.$
Thus $|E(B^*)|\ge47$.  As $|V(B^*)|=14$, we obtain
$\operatorname{ch}_{B^*}(C)=|E(B^*)|-|V(B^*)|\ge47-14=33.$

Finally, suppose that $7\le |Q(B^*)|\le12$.  Choose two end-blocks $B^*_1$ and $B^*_2$.  For $i=1,2$, let $y_i$ be the cut vertex of $B^*_i$ in $H$, set
 $z_i\in Q(B^*_i)\cap N_G(x)$.

We first show that $B^*_i$ contains a Hamilton path from $y_i$ to $z_i$.
Add a new vertex $w_i$ adjacent only to $y_i$ and $z_i$.  In $B^*_i+w_i$, every nonadjacent pair $u,v\in Q(B^*_i)$ satisfies
$d_{B^*_i+w_i}(u)+d_{B^*_i+w_i}(v)\ge 14\ge q_i+2=|V(B^*_i+w_i)|.$
Thus, the closure makes $Q(B^*_i)$ a clique.  Since $B^*_i$ is $2$-connected, $y_i$ has a neighbor $a_i\in Q(B^*_i)\setminus\{z_i\}$.  Thus the closure contains a Hamilton cycle
$w_i z_i P a_i y_i w_i,$
with $P$ a Hamilton path in $Q(B^*_i)$ from $z_i$ to $a_i$.
  By Theorem~\ref{thm:closure}, $B^*_i+w_i$ is Hamiltonian. Since $w_i$ is adjacent only to $y_i$ and $z_i$, deleting $w_i$ from a
Hamilton cycle of $B^*_i+w_i$ gives a Hamilton path $P_i$ of $B^*_i$
with ends $y_i$ and $z_i$.

Since $B_1^*$ and $B_2^*$ are end-blocks of $H$,  deleting the private vertices of $B_1^*$ and $B_2^*$ from $H$ leaves a connected graph that still contains $y_1$ and $y_2$.  Thus there is a $y_1y_2$-path $P_0$ in $H-(Q(B_1^*)\cup Q(B_2^*))$.  Then
$C = xz_1P_1y_1P_0y_2P_2z_2x$
is a cycle of $G$.  For each $i$, the path $P_i$ uses exactly $q_i$ edges of $B^*_i$; all remaining edges of $B^*_i$ are chords of $C$.  Moreover,
$2|E(B^*_i)|\ge 7q_i+2.$
Consequently,
$
\operatorname{ch}_G(C)
\ge \sum_{i=1}^2 \bigl(|E(B^*_i)|-q_i\bigr) 
\ge \sum_{i=1}^2 \left(\frac{7q_i+2}{2}-q_i\right)
   = \frac{5(q_1+q_2)+4}{2}
   \ge 37,
$
as $q_1,q_2\ge7$. This gives a cycle with at least $31$ chords and completes the proof.
\end{proof}

\begin{proposition}\label{prop:global-small}
Let $G$ be a graph on $31$ vertices with $\delta(G)\ge 8$ and no cycle having at least $31$ chords.  Then $G$ contains an end-block $B^*$ such that
$q(B^*)\ge 16.$
\end{proposition}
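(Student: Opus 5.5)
We argue by contradiction: assume every end-block $B^*$ of $G$ has $q(B^*)\le 15$, so Lemma~\ref{lem:small-endblock} gives $q(B^*)\in\{8,9,10\}$ for each end-block, with the stated structure. First, $G$ is connected. Otherwise some component has at most $15$ vertices; it has minimum degree at least $8$ and is either $2$-connected (a single block with $q=|V|\le 15$ and no cut vertex, handled by the same Hamiltonian-closure count as in Lemma~\ref{lem:small-endblock}) or has end-blocks. The other component, or components, then satisfy the same constraints with fewer vertices, and we run the same counting. Also $G$ is not $2$-connected, since $31\ge 2\cdot 8+1$ and Theorem~\ref{thm:Ash1985} would give a cycle with $48$ chords. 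Hence the block tree has at least two end-blocks. Each end-block contributes $8$, $9$ or $10$ private vertices, so there are at most three end-blocks, since four would use at least $32$ vertices.

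\textbf{Reduction to the internal structure.} Delete the private vertices of all end-blocks. With two end-blocks, the remaining $31-q_1-q_2\ge 11$ vertices form the union of internal blocks joining the cut vertices $x_1,x_2$. With three end-blocks, at most $7$ vertices remain, containing the cut vertices $x_1,x_2,x_3$. Every vertex that is private to an internal block and is not a cut vertex has degree at least $8$ inside that block.

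In the two-end-block case, the block tree is a path. If there is a single internal block $B$, apply Lemma~\ref{lem:two-exceptional} with $\{x,y\}=\{x_1,x_2\}$ and $Q=Q(B)$, where $q=31-q_1-q_2\in[11,15]$. The statement covers $q\le 13$. For $q=14,15$ we get $|V(B)|\ge 16$, and Theorem~\ref{thm:one-exception-ash} or Proposition~\ref{prop:large-terminal} (after treating one cut vertex as exceptional) still gives many chords. This still has to be checked with two exceptional vertices, for instance by adding an edge or by suppressing one of them. If there are several internal blocks, each internal block that has a private vertex has at least $9$ vertices, since a private vertex needs $8$ neighbours inside its block. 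Here I would use a cycle that runs through an end-block (a Hamilton path from $x_i$, as in the proof of Proposition~\ref{prop:large-terminal}) together with Hamilton paths in adjacent blocks, and sum the chords of each block. Each end-block already contributes about $3q_i-1\ge 23$ chords via a Hamilton path with both ends fixed, because $|E(B^*)|\ge (8q_i+d(x_i))/2$. So it is enough to chain a Hamilton path through an end-block with the chords of one more dense block, but a cycle can only pass through a single block of a chain of blocks if both its ends lie in one block.

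\textbf{Main obstacle.} Cycles lie inside a single block, so chords from different blocks cannot be combined, and I must get $31$ chords from one block. The end-blocks give only about $24$ to $30$ chords through their Hamilton cycles, since Lemma~\ref{lem:small-endblock} already shows each end-block has at most $30$. So the contradiction must come from the internal block containing at least $11$ vertices. That block must have enough private vertices of degree at least $8$: I would show it has at least $9$ private vertices and then apply Lemma~\ref{lem:two-exceptional}, or Proposition~\ref{prop:large-terminal} when it has at least $15$ vertices.

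The three-end-block case leaves at most $7$ central vertices, which cannot support a block with private degree-$8$ vertices. So all of them are cut vertices, or vertices of degree at least $8$ whose neighbours are forced into end-blocks. This is impossible by degree counting: a vertex with all neighbours among at most $7$ central vertices has degree at most $6$. This case analysis, especially ruling out chains of several small internal blocks in the two-end-block case, is where I expect most of the work.
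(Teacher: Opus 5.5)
\textbf{There is a genuine gap.} Your skeleton matches the paper: connectivity, Ash's theorem, at most three end-blocks with $q_i\in\{8,9,10\}$, and Lemma~\ref{lem:two-exceptional} applied to the internal block. The decisive step, however, rests on a miscount.

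Let $a_i$ be the cut vertex of $B^*_i$, let $x_1,x_2$ be the cut vertices of the internal block $B$, and let $\varepsilon_i\in\{0,1\}$ record whether a bridge $a_ix_i$ separates them. The $31-q_1-q_2$ vertices outside the end-blocks' private sets \emph{include} $x_1,x_2$, and also $a_i$ when $\varepsilon_i=1$. So $|Q(B)|=29-q_1-q_2-\varepsilon_1-\varepsilon_2\le 13$, not $31-q_1-q_2\in[11,15]$. Your worry about $q=14,15$ is therefore vacuous. The cases you treat as automatic are the real work: $|Q(B)|=9$ or $10$ arises, e.g.\ for $(q_1,q_2)=(10,10)$, or $(9,10)$ without bridges. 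Then Lemma~\ref{lem:two-exceptional} requires $s=d_B(x_1)+d_B(x_2)\ge 12$, resp.\ $\ge 6$.

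Nothing in your proposal supplies this. It needs the fine structure of Lemma~\ref{lem:small-endblock}, which you quote but never use:
\begin{itemize}
\item If $q_i=10$, then $d_{B^*_i}(a_i)=2$, so a bridge at $a_i$ would give $d_G(a_i)=3$. Hence $a_i=x_i$ and $d_B(x_i)\ge 8-2=6$.
\item If a bridge is present, $x_i$ has only one neighbour outside $B$, so $d_B(x_i)\ge 7$.
\end{itemize}
Checking the six pairs $(q_1,q_2)$ against these bounds is what closes the two-end-block case.

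Your treatment of several internal blocks cannot work: as you observe, a cycle lives in one block. What replaces it is counting, which you do not carry out.
\begin{itemize}
\item An internal block with private vertices has at least $7$ of them. Two such blocks would need $8+8+7+7+3>31$ vertices.
\item If there is no such block, all internal blocks are bridges. Two consecutive bridges produce a cut vertex of degree $2$, so every cut vertex lies in an end-block and $|V(G)|\le 10+10+2=22$.
\end{itemize}
Hence there is exactly one such block $B$, with at most one bridge on each side. This is what justifies the formula for $|Q(B)|$ above.

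Similarly, in the three-end-block case your degree bound only rules out cut vertices lying in no end-block. That yields $p=31-\sum q_i\le 3$, hence $\sum q_i\ge 28$ and some $q_i=10$. Only then does $d_{B^*_i}(a_i)=2$ produce a vertex of degree at most $6$, checked separately for $p=1,2,3$. Plain degree counting cannot suffice here, because the cut vertex of a $q=8$ end-block (a $K_9$) already has degree $8$ inside it.
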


\begin{proof}
Suppose not. We may assume $G$ is connected; if not, adding bridges between its components produces a graph $G'$ with $\delta(G')\ge\delta(G)\ge8$, and every cycle in $G'$ already lies in one component of $G$.  Hence a cycle with at least $31$ chords in $G'$ would also exist in $G$.  Moreover, by Theorem~\ref{thm:Ash1985}, $G$ is not $2$-connected.
Thus, it has at least two end-blocks.  By Lemma~\ref{lem:small-endblock}, each end-block has private size at least $8$. Since distinct end-blocks have disjoint private sets
and $|V(G)|=31$, $G$ has two or three end-blocks.
We distinguish two cases based on the number of end-blocks.

\medskip

\noindent
\textbf{Case 1: $G$ has three end-blocks $B^*_1,B^*_2,B^*_3$.} 
 \medskip

Let $Q(B^*_i)$ be the private set of $B^*_i$ with order $q_i$ and put $p=31-(q_1+q_2+q_3)$.  Since $q_i\ge8$, we have $p\le7$.

We first show that $p\leq 3$.  Suppose some internal block $B'$ has a nonempty private set $Q(B')$ and contains $r$ cut vertices ($r\ge2$). 
Then $|Q(B')|+r\leq p\leq 7$.
For every $v\in Q(B')$, Fact~\ref{fact:private} gives $d_{B'}(v)=d_G(v)\ge8$, while $d_{B'}(v)\le |Q(B')|-1+r$, so $|Q(B')|+r\ge9$, a contradiction. Hence no internal block has private vertices; every vertex  outside $\bigcup_iQ(B^*_i)$ is a cut vertex. We now prove a useful claim.
\begin{claim}\label{claim:cut-vertex}
    Each cut vertex $x$ is incident with at least one end-block.
\end{claim}

\begin{proof}
    Suppose not. Then all neighbors of $x$ can only be the other at most $p-1$ vertices.
    Since $p\leq 7$, $d_G(x)\leq p-1\leq 6$, a contradiction.
\end{proof}
Since each end-block has exactly one cut vertex, if $p\ge4$, some cut vertex is incident with no end-block, contradicting Claim \ref{claim:cut-vertex}.  Thus, $p\le3$.

Therefore, $q_1+q_2+q_3\ge28$.  Since each $q_i\in\{8,9,10\}$, at least one end-block has private size $10$.  Recall from Lemma~\ref{lem:small-endblock} that in an end-block of private size 10, the cut vertex has degree $2$ inside that block.
We now check the possible values of $(q_1,q_2,q_3)$.  By symmetry, assume $q_1 \ge q_2\ge q_3$.

If $p=1$, then $q_i=10$ for $i=1,2,3$.  The unique cut vertex is incident with all three end-blocks and has degree at most $2+2+2=6$, a contradiction.

If $p=2$, then $q_1=q_2=10$ and $q_3=9$.  Let $x,y$ be the two
cut vertices.  By Claim~\ref{claim:cut-vertex}, we may assume that $x$
is incident with two end-blocks.  Since there is no internal block with
private vertices, each of $x,y$ has at most one neighbor outside its
incident end-blocks.
If both end-blocks of private size $10$ are incident with $x$, then
Lemma~\ref{lem:small-endblock} gives
$ d_G(x)\le 2+2+1=5.$
Otherwise, $y$ is incident with an end-block of private size $10$, and
hence
       $ d_G(y)\le 2+1=3.$
In either case this contradicts $\delta(G)\ge8$.

If $p=3$, then either $q_1=q_2=10$, $q_3=8$, or $q_1=10$, $q_2=q_3=9$. By Claim \ref{claim:cut-vertex}, there exists a cut vertex $x$ incident with an end-block of private size 10. 
Then $d_G(x)\le 4$, a contradiction.

 \medskip
\noindent
\textbf{Case 2: $G$ has two end-blocks $B^*_1,B^*_2$.} 

 \medskip

 We next prove that there is a unique internal block with private vertices.

If no internal block has private vertices, then every internal block is a bridge block.  A cut vertex not contained in either end-block then belongs to exactly two bridge blocks and therefore has degree $2$, contradicting $\delta(G)\ge8$.  Hence every cut vertex lies in at least one end-block.  The two end-blocks therefore either share a cut vertex or are joined by a single bridge block.  In either case,
$|V(G)|\le 10+10+2=22,$
contradicting $|V(G)|=31$.  Thus at least one internal block has private vertices.
Since an internal block $B'$ with private set $Q(B')$ has exactly two cut vertices, and for $v\in Q(B')$, $d_{B'}(v)\le |Q(B')|-1+2=|Q(B')|+1$, while $d_{B'}(v)\ge8$, so $|Q(B')|\ge7$.
If there were two such internal blocks, then the two end-blocks and these
two internal blocks would form a chain of four blocks, requiring at least
three distinct cut vertices.
  The total number of vertices would then be at least $8+8+7+7+3 > 31$, a contradiction.  Therefore, there is a unique internal block $B$ with private vertices, and $B$ is $2$-connected.

Let $x_1,x_2$ be the two cut vertices of the unique internal block $B$ that contains private vertices, and set $Q(B)=V(B)\setminus\{x_1,x_2\}$.
By Fact~\ref{fact:private},
$d_B(v) = d_G(v) \ge 8 $ for every  $ v \in Q(B).$
Let $B^*_1,B^*_2$ be the two end-blocks.  For $i=1,2$, let $a_i$ be the cut vertex of $B^*_i$, let $q_i$ be the private size of $B^*_i$, and set
$\alpha_i = d_{B^*_i}(a_i).$
For each $B^*_i$, either $a_i = x_i$ or $a_i$ and $x_i$ are joined by exactly one bridge block. 
Define
\[
\varepsilon_i =
\begin{cases}
0, & \text{if } a_i = x_i,\\[2pt]
1, & \text{if } a_i \ne x_i.
\end{cases}
\]
Thus $\varepsilon_i$ indicates whether there is an extra cut vertex between $B^*_i$ and $B$.  Since $G$ has $31$ vertices, we have the $q_1+q_2$ private vertices of the two end-blocks, the $|Q(B)|$ private vertices of the internal block, the two cut vertices $x_1$ and $x_2$, and $\varepsilon_1+\varepsilon_2$ extra cut vertices.  Therefore
$31 = q_1 + q_2 + |Q(B)| + 2 + \varepsilon_1 + \varepsilon_2,$
or equivalently
\begin{equation}\label{eq:central-count}
    |Q(B)| = 29 - q_1 - q_2 - \varepsilon_1 - \varepsilon_2.
\end{equation}

\begin{claim}\label{claim:var}
    If $\varepsilon_i = 1$, then $d_B(x_i) \ge 7$. If $\varepsilon_i = 0$, then $d_B(x_i) \ge 2$. Moreover, if $q_i = 10$, then necessarily $\varepsilon_i = 0$, and in this case $d_B(x_i) \ge 6$.
\end{claim}

\begin{proof}
Suppose first that $\varepsilon_i = 1$.  
Then  $x_i$ also has exactly one neighbour outside $B$.  Since $d_G(x_i) \ge 8$, we obtain $d_B(x_i) \ge d_G(x_i) - 1 \ge 7$.
Now suppose that $\varepsilon_i = 0$.   Since $B$ is $2$-connected, $d_B(x_i) \ge 2$.  Moreover, the neighbours of $x_i$ in $B^*_i$ contribute exactly $\alpha_i$ to its degree outside $B$.  Hence $d_G(x_i) = d_B(x_i) + \alpha_i \ge 8$. Thus $d_B(x_i) \ge \max\{\,2,\; 8 - \alpha_i \,\}$.

Finally, if $\varepsilon_i=1$,
then  $a_i$ has exactly one neighbour outside $B^*_i$.  Hence $d_G(a_i) = \alpha_i + 1$.  Since $d_G(a_i) \ge 8$, it follows that $\alpha_i \ge 7$.  Therefore $q_i \neq 10$, because Lemma~\ref{lem:small-endblock} gives $\alpha_i = 2$ whenever $q_i = 10$.
Thus,  if $q_i = 10$, $\varepsilon_i = 0$.  Moreover, Lemma~\ref{lem:small-endblock} gives $\alpha_i = 2$, and therefore $d_B(x_i) \ge 8 - \alpha_i = 6$.
\end{proof}

We now verify the hypotheses of Lemma~\ref{lem:two-exceptional} for $B$, where the degree-sum parameter is $s=d_B(x_1)+d_B(x_2)$.  It suffices to check that $|Q(B)|\ge11$, or $|Q(B)|=10$ and $s\ge6$, or $|Q(B)|=9$ and $s\ge12$. 
We now check the possible values of $(q_1,q_2)$.  By symmetry, assume $q_1 \le q_2$. There are six choices for $(q_1,q_2)$.

If $(q_1,q_2) = (8,8)$, then by \eqref{eq:central-count}, $|Q(B)| = 13 - \varepsilon_1 - \varepsilon_2 \ge 11$.

If $(q_1,q_2) = (8,9)$, then $|Q(B)| = 12 - \varepsilon_1 - \varepsilon_2$.  Thus either $|Q(B)| \ge 11$, or $|Q(B)| = 10$.  If $|Q(B)|=10$, $\varepsilon_1 = \varepsilon_2 = 1$, so Claim \ref{claim:var} gives $s = d_B(x_1) + d_B(x_2) \ge 7 + 7 \ge 6$.

If $(q_1,q_2) = (8,10)$, Claim \ref{claim:var} implies $\varepsilon_2 = 0$.  Hence $|Q(B)| = 11 - \varepsilon_1$.  Thus either $|Q(B)| = 11$, or $|Q(B)| = 10$. If $|Q(B)|=10$, $\varepsilon_1 = 1$, and Claim \ref{claim:var} gives $s = d_B(x_1) + d_B(x_2) \ge 7 + 6 \ge 6$.

If $(q_1,q_2) = (9,9)$, then $|Q(B)| = 11 - \varepsilon_1 - \varepsilon_2$.  If $|Q(B)| = 11$, the first alternative of Lemma~\ref{lem:two-exceptional} holds.  If $|Q(B)| = 10$, exactly one of $\varepsilon_1,\varepsilon_2$ equals $1$, and Claim \ref{claim:var} gives $s \ge 7 + 2 \ge 6$.  If $|Q(B)| = 9$, then $\varepsilon_1 = \varepsilon_2 = 1$, and the claim gives $s \ge 7 + 7 \ge 12$.

If $(q_1,q_2) = (9,10)$, Claim \ref{claim:var} gives $\varepsilon_2 = 0$.  Hence $|Q(B)| = 10 - \varepsilon_1$.  If $|Q(B)| = 10$, then $\varepsilon_1 = 0$, and Claim \ref{claim:var} gives $s \ge 2 + 6 \ge 6$.  If $|Q(B)| = 9$, then $\varepsilon_1 = 1$, and Claim \ref{claim:var} gives $s \ge 7 + 6 \ge 12$.

Finally, if $(q_1,q_2) = (10,10)$, Claim \ref{claim:var} gives $\varepsilon_1 = \varepsilon_2 = 0$.  Hence $|Q(B)| = 29 - 10 - 10 = 9$.  Moreover, Claim \ref{claim:var} gives $s = d_B(x_1) + d_B(x_2) \ge 6 + 6 = 12$.

By Lemma~\ref{lem:two-exceptional}  $B$ contains a cycle with at least $31$ chords, a contradiction.  This completes the proof.
\end{proof}

\begin{theorem}
\label{thm:f31-upper}
Every graph on $31$ vertices with minimum degree at least $8$ contains a cycle with at least $31$ chords.
\end{theorem}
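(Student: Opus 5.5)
We argue by contradiction. Suppose $G$ is a graph on $31$ vertices with $\delta(G)\ge 8$ and no cycle having at least $31$ chords. As in the proof of Proposition~\ref{prop:global-small}, we may assume $G$ is connected: adding bridges between components does not lower the minimum degree and creates no new cycles. Theorem~\ref{thm:Ash1985} with $d=8$ (note $31\ge 16$) excludes the $2$-connected case, since it would give a cycle with $48$ chords. So $G$ has a cut vertex, and we may apply Proposition~\ref{prop:global-small}. It provides an end-block $B^*$ with cut vertex $x$ and $q(B^*)\ge 16$.

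Next we check the hypotheses of Proposition~\ref{prop:large-terminal} for $B^*$ with exceptional vertex $x$. By Fact~\ref{fact:private}, every $v\in Q(B^*)=V(B^*)\setminus\{x\}$ satisfies $d_{B^*}(v)=d_G(v)\ge 8$. Hence $B^*$ is not a bridge block, and so it is $2$-connected. Moreover $|V(B^*)|=q(B^*)+1\ge 17\ge 15$. Proposition~\ref{prop:large-terminal} therefore yields a cycle of $B^*$ with at least $31$ chords.

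The chords counted there are edges of $B^*\subseteq G$, so this cycle has at least $31$ chords in $G$. This contradicts the assumption, and the theorem follows.

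The only step needing care is the reduction to the connected case together with the degree bookkeeping. Once Proposition~\ref{prop:global-small} is available the argument is immediate; all of the real difficulty was already handled in the case analysis of that proposition, specifically in ruling out configurations where every end-block is small (via Lemmas~\ref{lem:small-endblock} and~\ref{lem:two-exceptional}).
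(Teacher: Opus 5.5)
Your proof is correct and matches the paper's argument. You reduce to a connected, non-$2$-connected counterexample, use Proposition~\ref{prop:global-small} to obtain an end-block $B^*$ with $q(B^*)\ge 16$, and apply Proposition~\ref{prop:large-terminal} to $B^*$ with its cut vertex as the exceptional vertex. Your extra checks just spell out steps the paper compresses into ``we may assume'': adding bridges creates no new cycles or chords, Theorem~\ref{thm:Ash1985} with $d=8$ disposes of the $2$-connected case, and $B^*$ is $2$-connected with $|V(B^*)|\ge 17$.
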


\begin{proof}
Suppose $G$ is a counterexample.  We may assume $G$ is connected but not $2$-connected.  If every end-block has private size at most $15$, Proposition~\ref{prop:global-small} yields a contradiction.  Hence some end-block $B^*$ has private size at least $16$.  Let $x$ be its unique cut vertex and set $Q(B^*) = V(B^*) \setminus \{x\}$.  By Fact~\ref{fact:private}, every vertex of $Q(B^*)$ has degree at least $8$ inside $B^*$.  Proposition~\ref{prop:large-terminal} then gives a cycle with at least $31$ chords in $B^*$, and hence in $G$.
\end{proof}

\begin{proposition}
\label{prop:lower}
There exists a graph on $31$ vertices with minimum degree $7$ and no cycle with at least $31$ chords.
\end{proposition}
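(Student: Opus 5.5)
We need a 31-vertex graph with minimum degree 7 and no cycle with 31 or more chords. The natural choice is a disjoint union of small dense blocks, since every cycle lies inside one block. We want blocks in which the total number of edges is small relative to what a Hamilton cycle uses.

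The simplest candidate is $K_8$. A cycle of length $\ell$ in $K_8$ has $\binom{\ell}{2}-\ell$ chords, which is maximized at $\ell=8$, giving $28-8=20<31$. So copies of $K_8$ are safe. Since $31=3\cdot 8+7$, we cannot use only disjoint $K_8$'s. Instead we glue blocks at cut vertices.

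The first attempt is to take four copies of $K_8$ sharing one common vertex. This gives $1+4\cdot 7=29$ vertices, which is two short. We must add two vertices without creating a block with many chords.

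A better plan is a chain of $K_8$'s glued at cut vertices. Each gluing saves one vertex, so $t$ copies of $K_8$ in a chain have $8t-(t-1)=7t+1$ vertices. For $t=4$ this is $29$, still not $31$. So we use different clique sizes: two blocks $K_8$ and $K_9$ glued, plus others. A $K_9$ block is dangerous, since its Hamilton cycle has $36-9=27$ chords, which is still fine. A $K_{10}$ block gives $45-10=35$, which is too many.

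We therefore look for a block tree whose blocks are $K_8$ or $K_9$ with total vertex count $31$. Every cycle lies in one block, so the maximum chord count is $27<31$. Minimum degree is at least $7$, because each vertex lies in some $K_{\ge 8}$. The vertex count is $\sum_i |B_i| - (\#\text{blocks}-1)$ for a connected block tree. With $a$ copies of $K_8$ and $b$ copies of $K_9$, this gives $7a+8b+1=31$, i.e.\ $7a+8b=30$, which has no nonnegative solution. Allowing the graph to be disconnected, each component contributes $7a+8b+1$ vertices. Two components give $7a+8b+2=31$, i.e.\ $7a+8b=29$, also unsolvable. Three components give $7a+8b=28$, so $a=4$, $b=0$: three components using four copies of $K_8$ in total, say a chain of two $K_8$'s (15 vertices) plus two disjoint $K_8$'s (16 vertices). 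That totals $31$ vertices.

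So the construction is $G=(K_8\cup_v K_8)\,\dot\cup\, K_8\,\dot\cup\, K_8$, where $K_8\cup_v K_8$ denotes two copies of $K_8$ sharing one vertex. Even simpler, $K_8\dot\cup K_8\dot\cup K_8\dot\cup K_7$ fails because $K_7$ has degree $6$, so the shared-vertex chain is needed.

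To verify: the minimum degree is $7$, with the shared vertex having degree $14$. Every cycle lies within a single block $K_8$, since a cycle cannot pass through a cut vertex into two blocks. Hence each cycle has at most $20$ chords. The only step needing care is the statement that cycles stay inside blocks, which is standard.
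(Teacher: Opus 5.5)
Your construction is correct. It uses the same idea as the paper: glue cliques of order $8$ or $9$ at cut vertices, so that every cycle lies inside a single clique block and therefore has at most $\binom{9}{2}-9=27$ chords.

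The only difference is the specific graph.
\begin{itemize}
\item The paper takes two copies of $K_8$ and two copies of $K_9$ sharing one common vertex $z$. This graph is connected, has $1+14+16=31$ vertices, and every cycle has at most $27$ chords.
\item You take $K_8\cup_v K_8$ together with two disjoint copies of $K_8$. This graph is disconnected, and every cycle has at most $20$ chords.
\end{itemize}
Your disconnected example is legitimate, because $f(n,c)$ is defined over all $n$-vertex graphs with no connectivity hypothesis.

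Your motivating case analysis contains false arithmetic claims:
\begin{itemize}
\item $7a+8b=30$ does have a nonnegative solution, namely $(a,b)=(2,2)$. This gives a connected block tree with two $K_8$ and two $K_9$ blocks, which is exactly the paper's example.
\item $7a+8b=29$ also has the solution $(a,b)=(3,1)$.
\end{itemize}
These claims are not used when you verify your final graph, so the proof stands. You should still correct them. Had you found $(2,2)$, you would have obtained a connected example directly, without relying on disconnected graphs being admissible.
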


\begin{proof}
Let $G$ be obtained from two copies of $K_8$ and two copies of $K_9$ by identifying a chosen vertex from each clique into a single common vertex $z$.  Then, $|V(G)| = 1 + 2(8-1) + 2(9-1) = 31$ and
the minimum degree of $G$ is $7$.  

Since the four clique blocks meet pairwise only at the cut vertex $z$,  a cycle cannot use vertices from two different clique blocks. Hence, every cycle of $G$ lies in a single copy of $K_8$ or $K_9$.
A cycle in $K_8$ has length at most $8$, and hence at most $\binom{8}{2} - 8 = 20$ chords.  Similarly, a cycle in $K_9$ has length at most $9$, and hence at most $\binom{9}{2} - 9 = 27$ chords.  Thus no cycle in $G$ has $31$ chords.
\end{proof}

\begin{proof}[\bf Proof of Theorem \ref{thm:KaraKral-conj}.]
Theorem~\ref{thm:f31-upper} gives $f(31,31)\le8$, and Proposition~\ref{prop:lower} gives $f(31,31)\ge8$.
\end{proof}

\section{Concluding remarks}
To conclude this paper, we consider two related problems.
\subsection{Voss's problem and the theorems of Gupta-Kahn-Robertson and Tian-Zang}

In this subsection, we prove Proposition~\ref{prop:four-chord-main} by combining standard properties of $4$-critical graphs with the chord bounds of Gupta-Kahn-Robertson and Tian--Zang.

We recall a standard reduction and basic properties of critical graphs.

\begin{lemma}\label{lem:basic}
If $\chi(G)\ge4$, then $G$ contains a $4$-critical subgraph $H$.  
Moreover, every $4$-critical graph is $2$-connected and has minimum degree at least $3$.
\end{lemma}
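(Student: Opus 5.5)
The lemma has two parts, and I will treat them in order: extracting a $4$-critical subgraph, and then the structural properties of $4$-critical graphs.

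\emph{Existence of a $4$-critical subgraph.} Among all subgraphs $H\subseteq G$ with $\chi(H)\ge 4$, choose one that is minimal with respect to deletion of vertices and edges. This is possible because $G$ is finite and $G$ itself qualifies. By minimality, every proper subgraph of $H$ is $3$-colorable. Deleting a single edge lowers the chromatic number by at most one, so $\chi(H)\le 4$, hence $\chi(H)=4$ and $H$ is $4$-critical. (Isolated vertices cannot occur, since deleting one would not change the chromatic number.)

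\emph{Minimum degree.} Let $H$ be $4$-critical and let $v$ be a vertex. Then $H-v$ is $3$-colorable. If $d_H(v)\le 2$, some color among the three is missing on $N_H(v)$, and giving $v$ that color yields a $3$-coloring of $H$, a contradiction. Hence $\delta(H)\ge 3$.

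\emph{Connectivity.} I first show that $H$ is connected. If not, each component is a proper subgraph and so is $3$-colorable. Taking the union of these colorings $3$-colors $H$, a contradiction. Next suppose $H$ has a cut vertex $x$. Write $H=H_1\cup H_2$ with $V(H_1)\cap V(H_2)=\{x\}$, where both $H_i$ are proper subgraphs. Each $H_i$ has a $3$-coloring, and I permute colors in the coloring of $H_2$ so that $x$ receives the same color in both. Since no edge joins $H_1-x$ to $H_2-x$, the combined coloring is proper, which contradicts $\chi(H)=4$. Finally, $|V(H)|\ge 4>2$, so $H$ is $2$-connected.

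None of these steps is a real obstacle. The only point needing care is the gluing at the cut vertex, which works because the colorings of the two sides can be permuted independently.
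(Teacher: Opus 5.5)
Your proof is correct. The paper states this lemma without proof as a standard fact about critical graphs, and your argument is exactly the standard one it implicitly relies on: take a minimal $4$-chromatic subgraph, extend a $3$-coloring of $H-v$ when $d_H(v)\le 2$, and glue $3$-colorings across a cut vertex after permuting colors.
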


\begin{lemma}\label{lem:size}
If $G$ is a $K_4$-free 4-critical graph, then $|V(G)|\ge 6$.
\end{lemma}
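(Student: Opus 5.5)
We must show that a $K_4$-free $4$-critical graph $G$ has at least $6$ vertices. The plan is to rule out $|V(G)|\le 5$ directly, using Lemma~\ref{lem:basic}: $G$ is $2$-connected and $\delta(G)\ge 3$.

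First, $|V(G)|\ge 4$, since $\chi(G)=4$. If $|V(G)|=4$, then $\delta(G)\ge3$ forces every vertex to be adjacent to the other three, so $G\cong K_4$, contradicting $K_4$-freeness.

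The main case is $|V(G)|=5$. Take any vertex $v$ and let $N=N_G(v)$, so $|N|\ge3$. Since $G$ is $K_4$-free, $G[N]$ contains no triangle.

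If $|N|=4$, then $G[N]$ is a triangle-free graph on $4$ vertices, hence bipartite and $2$-colorable. Coloring $N$ with two colors and $v$ with a third gives a $3$-coloring of $G$, a contradiction.

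So every vertex has degree exactly $3$. Then the degree sum is $5\cdot 3=15$, which is odd, a contradiction.

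Alternatively, one can argue uniformly for $|V(G)|=5$. For any vertex $v$, the graph $G-v$ has $4$ vertices, and we want to show it has a $2$-colorable piece large enough to extend a coloring. A cleaner route is to pick a vertex $v$ of maximum degree and use the parity argument above. Either way, the case analysis is short.

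The only step needing care is the $|N|=4$ case. Its key point is that a triangle-free graph on $4$ vertices is bipartite, because its only possible odd cycles would be triangles, since a $5$-cycle needs $5$ vertices. Once this is checked, both subcases for $|V(G)|=5$ yield contradictions, and hence $|V(G)|\ge6$.
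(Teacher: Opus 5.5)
Your proof is correct, but it handles the $5$-vertex case by a somewhat different route than the paper. The paper passes to the complement. Since $\delta(G)\ge 3$ on five vertices, $\Delta(\overline{G})\le 1$, so $\overline{G}$ is a matching, and the paper splits on its number of edges: none gives $K_5$, one edge $ab$ gives a $K_4$ on $V(G)\setminus\{a\}$, and two edges $ab,cd$ give the explicit $3$-coloring with classes $\{a,b\},\{c,d\},\{e\}$.

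You instead use the handshake lemma to force a vertex $v$ of degree $4$. This is the same fact as ``a matching on five vertices cannot be perfect.'' You then settle everything with one dichotomy on $G-v=G[N(v)]$. A triangle there would form a $K_4$ with $v$. Otherwise this $4$-vertex graph is bipartite, and $v$ takes the third color. This dichotomy covers exactly the paper's three subcases, since $v$ is necessarily unmatched in $\overline{G}$.

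Your version merges the case split into a single argument and uses $K_4$-freeness only through $N(v)$. The paper's version describes all $5$-vertex graphs with $\delta\ge 3$, which makes the coloring explicit and matches the complement-based method it reuses in Lemma~\ref{lem:six-vertex-critical}.

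Two minor points: the paragraph beginning ``Alternatively'' adds nothing and can be deleted, and the $2$-connectedness you cite from Lemma~\ref{lem:basic} is never used.
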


\begin{proof}
By Lemma~\ref{lem:basic}, $\delta(G)\ge 3$.  If $|V(G)|\le 3$, then $G$ is 3-colorable.  If $|V(G)|=4$, then a 4-chromatic graph on four vertices is $K_4$, a contradiction.

Assume $|V(G)|=5$.  Since $\delta(G)\ge 3$, the complement $\overline{G}$ has maximum degree at most one, so it is a matching together with isolated vertices.  If $\overline{G}$ has no edge, then $G=K_5$, which contains $K_4$.  If $\overline {G}$ has exactly one edge $ab$, then $V(G)\setminus\{a\}$ spans a $K_4$ in $G$.  Because $G$ is $K_4$-free, $\overline {G}$ has exactly two edges, say $ab$ and $cd$, with fifth vertex $e$.  Coloring $a,b$ with color 1, $c,d$ with color 2, and $e$ with color 3 gives a proper 3-coloring of $G$, a contradiction.  Hence $|V(G)|\ne 5$.
\end{proof}

\begin{lemma}\label{lem:six-vertex-critical}
Every $K_4$-free $4$-critical graph on six vertices is the wheel obtained from a $5$-cycle by adding a vertex adjacent to every vertex of the cycle.
Consequently, it contains a cycle with four chords.
\end{lemma}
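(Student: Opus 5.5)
The plan is to classify $G$ by a complement argument, in the same way as in Lemma~\ref{lem:size}.

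\textbf{Step 1: degree bounds.} Let $G$ be a $K_4$-free $4$-critical graph on six vertices. By Lemma~\ref{lem:basic}, $\delta(G)\ge3$, so $\Delta(\overline G)\le 2$. Thus $\overline G$ is a disjoint union of paths and cycles.

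\textbf{Step 2: $\overline G$ has no isolated vertex unless it is a $C_5$.} Suppose $\overline G$ has an isolated vertex $e$. Then $e$ is adjacent in $G$ to all other five vertices, so $G-e$ must be $K_3$-free. Otherwise a triangle together with $e$ gives a $K_4$. Also $G-e$ cannot be bipartite: if it were, $G$ would be $3$-colorable, using a third color on $e$. A triangle-free, non-bipartite graph on five vertices contains an odd cycle of length $5$, and $C_5$ is the only triangle-free graph on five vertices containing a $5$-cycle, since any chord creates a triangle. Hence $G-e\cong C_5$, and $G$ is the wheel $W_5$.

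\textbf{Step 3: $\overline G$ with no isolated vertex.} Now suppose $\overline G$ has no isolated vertex. Then every component of $\overline G$ has at least two vertices, and each component, being a path or cycle, can be partitioned into independent sets of $G$, that is, cliques of $\overline G$, of size at most $2$ in a suitable way. The simplest route is to observe that $\overline G$ then has a perfect matching unless some component is an odd path or odd cycle on $3$ or $5$ vertices. If $\overline G$ has a perfect matching, its three matched pairs are independent in $G$, giving a $3$-coloring of $G$, a contradiction. The remaining component types on six vertices with all components of order at least $2$ are:
\begin{enumerate}[nosep]
\item a $P_3$ or $C_3$ together with another $P_3$ or $C_3$;
\item a component of order $5$ together with an isolated vertex, which is excluded here;
\item $P_2$ together with a component of order $4$, which has a perfect matching.
\end{enumerate}
For two components each of order $3$, each component contains a non-edge of $G$ (an edge of $\overline G$). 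Taking one such pair from each component gives two color classes of size $2$. The remaining two vertices, one from each component, are non-adjacent in $G$ because they lie in different components of $\overline G$, so they form a third class. Hence $G$ is $3$-colorable, a contradiction.

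\textbf{Step 4: $K_4$-freeness and the degree bound in the remaining cases.} One should double-check that the case analysis also respects $K_4$-freeness and $\delta\ge3$. This is automatic, since only $3$-colorability was used to exclude cases. The main obstacle is making sure the enumeration of complements with $\Delta\le2$ is complete. I would organise it by the number of vertices of $\overline G$ covered by a maximum matching: either $6$, giving a $3$-coloring, or some vertex is unmatched. If an unmatched vertex is isolated, use Step 2. If not, the two components of order $3$ argument handles the rest.

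\textbf{Step 5: four chords.} Finally, in $W_5$ with hub $h$ and rim $c_1\cdots c_5$, the cycle $h c_1 c_2 c_3 c_4 c_5 h$ is a Hamilton cycle. Its chords are $hc_2,hc_3,hc_4$ and $c_5c_1$, which is four chords.
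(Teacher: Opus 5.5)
Your Steps 1, 2 and 5 are correct. The parity reduction in Step 3, to the case where $\overline G$ has exactly two components of order $3$, is also fine, even though your explicit list omits the $\{6\}$ and $\{2,2,2\}$ component types. The genuine gap is in how you dispose of the two-components-of-order-$3$ case. You claim that the two leftover vertices, one from each component, are non-adjacent in $G$ because they lie in different components of $\overline G$. This is backwards. Vertices in different components of $\overline G$ are non-adjacent in $\overline G$, hence \emph{adjacent} in $G$, so they cannot share a colour.

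The conclusion itself is false, not just its justification. Suppose $\overline G\cong P_3\cup P_3$, with paths $a_1a_2a_3$ and $b_1b_2b_3$. Then $G\cong (K_2\cup K_1)\vee(K_2\cup K_1)$ satisfies $\delta(G)=3$. Also $\{a_1,a_3,b_1,b_3\}$ induces a $K_4$ in $G$, so $\chi(G)=4$. No $3$-colouring exists, and this case cannot be excluded the way you try. Your remark in Step~4, that $K_4$-freeness is ``automatic'' because only $3$-colourability was used, is therefore exactly where the argument breaks. This is the one case where the $K_4$-free hypothesis (or criticality) is indispensable.

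The repair is short. If one of the two components is a $C_3$, it is an independent triple in $G$ and can be a single colour class. The other three vertices then induce $K_2\cup K_1$ or $\overline{K_3}$ in $G$, both $2$-colourable; this is essentially the paper's argument that $\overline G$ is triangle-free. If both components are $P_3$, the endpoints of each path are adjacent in $G$, and all cross pairs are adjacent in $G$, which gives a $K_4$. The paper absorbs this case uniformly through $\alpha(\overline G)\le 3$, since $\alpha(P_3\cup P_3)=4$. With this fix, your split on whether $\overline G$ has an isolated vertex is a valid alternative to the paper's $\alpha+\nu$ counting. It is slightly more hands-on, but it identifies the wheel directly in Step 2.
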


\begin{proof}
Let $G$ be such a graph.
Since $\delta(G)\ge3$, we have $\Delta(\overline{G})\le2$.
Let $\alpha(G)$ and $\nu(G)$ denote the independence number and matching
number of $G$, respectively.
Moreover, $\alpha(\overline{G})\le3$ and $\nu(\overline{G})\le2$: an independent set of four vertices in $\overline{G}$ would induce a $K_4$ in $G$, and a matching of size $3$ in $\overline{G}$ would partition $V(G)$ into three independent pairs, yielding a $3$-coloring of $G$.

We first show that $\overline{G}$ is triangle-free.  Suppose $T$ is a triangle in $\overline{G}$.  Because $\Delta(\overline{G})\le2$, $T$ would be a component of $\overline{G}$.  The remaining three vertices cannot induce a triangle in $G$ (otherwise a $K_4$ would appear), so they are $2$-colorable.  Assigning a third color to the vertices of $T$ would then give a $3$-coloring of $G$, a contradiction.

Thus, every component of $\overline{G}$ is a path or a cycle of length at least $4$.  If $\overline{G}$ contained no odd cycle, all its components would be paths or even cycles.  For every such component $D$ we have $\alpha(D)+\nu(D)=|V(D)|$.  Summing over all components gives $\alpha(\overline{G})+\nu(\overline{G})=6$, contradicting $\alpha(\overline{G})\le3$ and $\nu(\overline{G})\le2$.  Hence $\overline{G}$ contains an odd cycle.  Since $\overline{G}$ is triangle-free and has six vertices, this cycle must be a $5$-cycle.  Its vertices already have degree $2$ in $\overline{G}$, so the remaining vertex is isolated.  Therefore, $\overline{G}\cong C_5\cup K_1$, and $G$ is the wheel with rim $C_5$.

The wheel has $10$ edges.  A Hamilton cycle is obtained by joining the universal vertex to the endpoints of a Hamilton path on the rim; it contains $6$ edges, so the remaining $4$ edges are chords.  Thus, $G$ contains a cycle with four chords.
\end{proof}

\begin{proof}[\bf Proof of Proposition~\ref{prop:four-chord-main}]
Let $G$ be a $K_4$-free graph with $\chi(G)\ge4$.  By Lemma~\ref{lem:basic}, $G$ contains a $4$-critical subgraph $H$; clearly $H$ is also $K_4$-free.  Lemmas~\ref{lem:basic} and~\ref{lem:size} imply that $H$ is $2$-connected and satisfies
$\delta(H)\ge3 $ and $|V(H)|\ge6.$

If $\delta(H)\ge4$, then Theorem~\ref{thm:GuptaKahnRobertson} yields a cycle in $H$ with at least
$(\delta(H)+1)(\delta(H)-2)/{2}\ge5$
chords.

It remains to consider $\delta(H)=3$.  If $|V(H)|=6$, Lemma~\ref{lem:six-vertex-critical} gives a cycle with four chords.  Hence we may assume $|V(H)|\ge7$.  Theorem~\ref{thm:tian-zang} then applies unless $H\cong K_{3,m}$ for some $m\ge3$ or $H$ is the Petersen graph.  The former is bipartite and the latter is $3$-colorable, so neither is $4$-critical. 
By Theorem~\ref{thm:tian-zang}, $H$ contains a cycle with at least $4$ chords, and this cycle is also a cycle of $G$ since $H\subseteq G$.
\end{proof}

\subsection{A simple counterexample}

In this subsection we construct an infinite family of $3$-connected graphs where a prescribed pair of edges need not lie on a cycle with at least two chords.

\begin{conjecture}[Voss, \rm{\cite[Conjecture~8.5.6, pp. 169]{VossBook}}]
If $G$ is 3-connected, then every pair of edges of $G$ is contained in a cycle with at least two chords.
\end{conjecture}
We disprove this conjecture by constructing, for every $k\ge2$, a 3-connected graph of order $k+3$.

\begin{proposition}
For every integer $k\ge 2$ there exists a $3$-connected graph $G_k$ containing $k+3$ vertices such that a pair of edges that lies in no cycle with at least two chords.
\end{proposition}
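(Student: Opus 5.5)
The plan is constructive. For each $k\ge 2$ I will write down an explicit graph $G_k$ on $k+3$ vertices, fix two edges $e,f$ in it, and then verify two things: that $G_k$ is $3$-connected, and that every cycle through both $e$ and $f$ has at most one chord. I have not yet pinned down a graph that works. The natural candidates I checked so far all fail:
\begin{itemize}
\item In $K_{3,k}$, a $6$-cycle through all three small-side vertices has three chords.
\item In the wheel, every cycle through the hub picks up a spoke chord at each interior rim vertex of its rim path.
\item In $K_3\vee \overline{K_k}$, the $4$-cycles already carry two chords.
\end{itemize}
The guiding principle drawn from these failures is that $e$ and $f$ must be placed so that the cycles through both are forced to be short, or are forced to pass through vertices whose extra neighbours lie off the cycle. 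So I will look for a graph with a degree-$3$ "gateway" structure around $e$ and $f$, together with a large family of $k$ interchangeable vertices that supplies the order and the third disjoint path needed for $3$-connectivity.

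Once a candidate is fixed, I will prove $3$-connectivity by Menger's theorem. For each pair of vertices I will exhibit three internally disjoint paths, or equivalently check that deleting any two vertices leaves the graph connected. The interchangeable vertices make this a short symmetric case check.

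For the chord condition, I will classify all cycles $C$ with $e,f\in E(C)$. First I will describe the possible routes of $C$ between the ends of $e$ and of $f$; in a well-chosen graph these should reduce to a bounded number of types, parametrized by which of the interchangeable vertices $C$ visits. For each type, I will count the edges of $G_k$ that join two non-consecutive vertices of $C$ and show that this count is at most $1$. The symmetry among the interchangeable vertices should let me handle all $k$ at once, with $k=2$ checked separately if the general argument needs $k\ge 3$.

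The main obstacle is the construction itself. $3$-connectivity supplies many cycles through any pair of edges, and in every natural dense example some long cycle through $e$ and $f$ picks up several chords. The first thing I will try is to take $e$ and $f$ to be two edges sharing a degree-$3$ vertex $v$. Every cycle through both then passes $v$ along the path $e\cup f$, and the third edge at $v$ is the only chord incident to $v$. I will then try to arrange the rest of the graph so that each admissible closing path between the other ends of $e$ and $f$ is an induced path whose vertices have no further neighbours on $C$.
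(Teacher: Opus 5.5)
\textbf{The proposal does not prove the statement.} It never fixes the graph $G_k$ or the pair of edges. For this proposition the construction \emph{is} the proof: once the right graph is written down, both $3$-connectivity and the chord count are two-line checks. The Menger and case-analysis scaffolding you describe carries no content on its own.

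The missing idea is the following graph. Take vertices $h,a,c,x_1,\dots,x_k$. Make $h$ adjacent to every other vertex, make $a$ and $c$ adjacent to every $x_i$, and add nothing else, so $ac\notin E$ and the $x_i$ are pairwise nonadjacent. In other words, $G_k=K_{3,k}+\{ha,hc\}$ with $\{h,a,c\}$ the small side. Choose $e=ha$ and $f=hc$.
\begin{itemize}
\item \emph{$3$-connectivity.} Delete any two vertices. If $h$ survives, it is adjacent to everything that remains. If $h$ is deleted, what remains is a copy of $K_{2,k-1}$ or a star centred at $a$ or $c$. Both are connected because $k\ge 2$.
\item \emph{Chord count.} A cycle through $ha$ and $hc$ consists of $a\,h\,c$ together with an $a$--$c$ path in $G_k-h\cong K_{2,k}$. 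The only such paths are $a\,x_i\,c$. The resulting $4$-cycle $h\,a\,x_i\,c\,h$ has exactly one chord, $hx_i$, because $ac$ is not an edge.
\end{itemize}

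\textbf{How this relates to your heuristics.} Your principle that the cycles through $e$ and $f$ should be forced to be short is exactly the mechanism. Your $K_{3,k}$ attempt was one step away. Adding two edges inside the small side and choosing \emph{those} as $e,f$ destroys every long cycle through the pair. $G_k$ still has cycles with many chords, but none of them contains both $ha$ and $hc$.

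By contrast, the plan you intend to try first points the wrong way. In the working example the common vertex of $e$ and $f$ is universal. Its large degree is harmless because the closing path has only one interior vertex. Suppose instead $v$ has degree $3$ with $N(v)=\{a,c,w\}$.
\begin{itemize}
\item Then $G-v$ is $2$-connected, so by the fan lemma some $a$--$c$ path in $G-v$ passes through $w$. The corresponding cycle already carries the chord $vw$. You would therefore need every such path to be induced and $ac\notin E(G)$, which leaves no slack.
\item The closing paths also cannot all be short. If every $a$--$c$ path in $G-v$ had length $2$, then $G-v\cong K_{2,m}$ with $\{a,c\}$ as the small side. Then $G$ would contain a vertex of degree $2$, contradicting $3$-connectivity.
\end{itemize}
So that route forces you to control chords along long closing paths. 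That is precisely where your other examples broke down.
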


\begin{proof}
Fix $k\ge 2$ and construct $G_k$ as follows. Let
$V(G_k)=\{h,a,c\}\cup\{x_1,x_2,\dots,x_k\}.$
Add the edges $ha$ and $hc$, and for each $i\in\{1,\dots,k\}$ add the three edges $hx_i, ax_i, cx_i$.  There is no edge $ac$ and no edge between any two vertices $x_i,x_j$.  Equivalently,
$G_k = K_{3,k} + \{ha,hc\},$
where the part of size $3$ is $\{h,a,c\}$.

We first verify that $G_k$ is $3$-connected.  Let $S\subseteq V(G_k)$ with $|S|\le 2$ and consider $G_k-S$.  If $h\notin S$, then $h$ remains and is adjacent to every other remaining vertex, so $G_k-S$ is connected.  If $h\in S$, then because $|S|\le 2$ at least one vertex of $\{a,c\}$ and at least one $x_i$ remain.  Every remaining $x_j$ is adjacent to every remaining vertex in $\{a,c\}$, hence the remaining vertices induce a connected subgraph. Thus, deleting at most two vertices cannot disconnect $G_k$, so $G_k$ is $3$-connected.

Now we show that the pair $e=ha,\; f=hc$ lies in no cycle with at least two chords.  Consider $G_k-h$, which is the complete bipartite graph $K_{2,k}$ with parts $\{a,c\}$ and $\{x_1,\dots,x_k\}$.  Let $C$ be a cycle containing both $e$ and $f$.  At vertex $h$ the cycle uses exactly the two incident edges $ha$ and $hc$; deleting $h$ yields a simple $a$–$c$ path in $G_k-h$.

In $G_k-h$ there is no edge $ac$ and no edge between any two $x_i$; therefore the only simple $a$–$c$ paths are of the form $a x_i c$ for some $i$.  Consequently, every cycle containing both $ha$ and $hc$ has the form
$h a x_i c h .$

The only possible chords of this cycle are $hx_i$ and $ac$.  The edge $hx_i$ belongs to $G_k$ while $ac$ does not, so the cycle has exactly one chord.  Hence $\{ha,hc\}$ is not contained in any cycle with at least two chords.

Since $|V(G_k)| = k+3$, the graphs have distinct orders for different $k$; they are pairwise nonisomorphic and thus provide infinitely many examples.
\end{proof}

\section*{Acknowledgment}
The second author is grateful to Jie Ma for providing the reference \cite{APSSV2026} and for introducing him to the Erd\H{o}s Problems website. The authors also thank Thomas Bloom for creating and sustaining the website \cite{Bloom1091}. 

\section*{Declaration of AI usage} An AI assistant (Chatgpt 5.5) was used in the search for a construction in Theorem \ref{thm:ghm-false} with the initial requirement based on the Haj\'os join suggested by the second author. After several rounds of refinement, talks with AI and checking, we have the current construction. The AI
assistant also helped with proofreading, grammar checking, and language polishing. Except for the work related to Theorem \ref{thm:ghm-false}, all proofs in this paper and their
presentation were carried out by the human authors, including the key ideas of extending the construction of Alexeev et al. and reasoning on the technique of Ash's theorem. The present authors bear full responsibility for the correctness of the proofs and the rigor of the writing in this paper.

\end{document}